\newtheorem{Thm}[subsection]{Theorem}
\newtheorem{Lemma}[subsection]{Lemma}
\newtheorem{Def}[subsection]{Definition}
\newtheorem{Cor}[subsection]{Corollary}
\numberwithin{equation}{section}% equation numbers within sections
\newcommand{\ben}{\begin{enumerate}}
\newcommand{\een}{\end{enumerate}}
\newcommand{\bec}{\begin{center}}
\newcommand{\eec}{\end{center}}
\newcommand{\beq}{\begin{equation}}
\newcommand{\eeq}{\end{equation}}
\newcommand{\bdm}{\begin{displaymath}}
\newcommand{\edm}{\end{displaymath}}
\newcommand{\R}{\mathbb{R}}
\newcommand{\N}{\mathbb{N}}
\newenvironment{proofof}[1]{{\sc Proof of #1}}{\quad\lower0.05cm\hbox{$\square$}\medskip}
\title[Schr\"{o}dinger Operators in Uniform Domains]
{Positive Solutions and Harmonic Measure for Schr\"{o}dinger Operators\\ in Uniform Domains}
\author{Michael W. Frazier}
\address{Mathematics Department, University of Tennessee, Knoxville,
Tennessee 37922, USA} \email{mfrazie3@utk.edu}
\author{Igor E. Verbitsky}
\address{Department of Mathematics, University of Missouri, Columbia, Missouri 65211, USA}
\email{verbitskyi@missouri.edu}
\subjclass[2010]{Primary 35R11, 31B35. Secondary 35J10}
\keywords{Schr\"{o}dinger equation, uniform domain, harmonic measure, Martin's kernel, gauge}
\begin{document}

\begin{abstract} We give bilateral pointwise estimates for positive solutions of the equation 
\begin{equation*} 
\left\{ \begin{aligned}
-\triangle  u & = \omega u \, \,&  &  \mbox{in} \, \,  \Omega, \quad  u \ge 0,  \\
u & = f \, \, &  &\mbox{on} \, \,  \partial \Omega ,
\end{aligned}
\right.  
\end{equation*} 
in a bounded uniform domain $\Omega\subset \R^n$, where $\omega$ is a locally finite Borel 
measure in $\Omega$, and $f\ge 0$ is integrable with respect to harmonic measure $d H^{x}$ on $\partial\Omega$. 

We also give sufficient 
and matching necessary conditions for the existence of a positive solution in terms of the exponential 
integrability of $M^{*} (m \omega)(z)=\int_\Omega M(x, z) m(x)\, d \omega (x)$ on $\partial\Omega$   
with respect to $f \, d H^{x_0}$, where $M(x, \cdot)$ is Martin's function 
with pole at $x_0\in \Omega, m(x)=\min (1, G(x, x_0))$, and $G$ is Green's function. 

These results give bilateral bounds for the harmonic measure associated with the Schr\"{o}dinger operator $-\triangle  - \omega $ on $\Omega$, 
and in the case $f=1$,  a criterion for the existence of the gauge function. Applications to elliptic equations of Riccati type with quadratic growth in the gradient 
are given. 
\end{abstract}

\maketitle \vfill

\eject

\tableofcontents

\section{Introduction}

Let $\Omega \subset \R^n$  ($n \geq 3$) be a nonempty, connected, open set  (a domain).  
It is called a non-tangentially accessible (NTA) domain if it is bounded, and satisfies  both the interior  and exterior corkscrew conditions, and the Harnack chain condition  (\cite{JK}). For instance, any bounded Lipschitz domain is an NTA domain. The exterior corkscrew condition yields that any NTA domain is regular  (in the sense of Wiener). 

More generally,  a uniform domain 
is defined as a bounded domain which 
satisfies  the interior corkscrew condition and the Harnack chain condition. 
Uniform domains satisfy the local (or uniform) boundary Harnack principle  (\cite{Aik}; see  \cite{An1}, \cite{JK} for Lipschitz and NTA domains). However, they are not necessarily regular. 
 Our main 
results hold for bounded uniform domains, and the regularity of $\Omega$ is not used below. 

In \cite{Ken}, a slightly more general version of an NTA domain $\Omega$ is defined  as a uniform domain  
 of class $\mathcal{S}$  (Definition 1.1.20), i.e., satisfying the volume density condition, which ensures that $\Omega$ is a regular domain. 
  Most of our results, including Theorem \ref{mainufest} and Theorem \ref{gaugecrit} below, hold in this setup for uniformly  elliptic operators in divergence form  
 $\mathcal{L} = \text{div} (A \nabla \cdot)$, with bounded measurable symmetric $A$, 
 in place of the Laplacian $\triangle$,  as in \cite{JK}, p. 138 and \cite{Ken}, Sec. 1.3. The same  class of operators  $\mathcal{L}$ in uniform domains with Ahlfors regular boundary  
 can be covered as well (see \cite{Zha}).

 In this paper, for simplicity,  we consider mostly the case $n\ge 3$.  In two dimensions, our results hold if $\Omega$  is a bounded finitely connected domain in $\R^2$, in particular, a bounded Lipschitz domain  (see 
 \cite{CZ}, Theorem 6.23; \cite{Han1}, Remark 3.5). 

 Let $\omega$ be a locally finite Borel measure on $\Omega$ and let $f$ be a non-negative Borel measurable function on $\partial \Omega$.  We consider the equation
\begin{equation}\label{ufeqn} 
\left\{ \begin{aligned}
-\triangle  u & = \omega u \, \,&  &  \mbox{in} \, \,  \Omega, \quad  u \ge 0,  \\
u & = f \, \, &  &\mbox{on} \, \,  \partial \Omega.
\end{aligned}
\right.  
\end{equation} 
Solutions of \eqref{ufeqn} are understood either in the potential theoretic sense, or $d \omega$-a.e. 
The precise definitions are discussed in \S \ref{sec2} below.
In the case of $C^2$ domains, or bounded Lipschitz domains $\Omega$, they 
coincide with ``very weak'' solutions in the sense of Brezis (see \cite{BCMR}, \cite{FV2}, \cite{MV}, Sec. 1.2, \cite{MR}).

 Let $\Omega \subseteq \R^n$ be a bounded domain with  Green's function 
$G(x,y)$; then $G$ is symmetric and strictly positive on $\Omega \times \Omega$. For a 
Borel measure $\nu$ on $\Omega$, 
\begin{equation} \label{Greenop}
G \nu (x) = \int_{\Omega} G(x,y) \, d\nu(y), \qquad x\in\Omega,
\end{equation}
is Green's operator. We call $G \nu$ the Green's potential if $G \nu\not\equiv +\infty$. 
  For a Borel measurable function $f$ on $\partial \Omega$, 
define the harmonic extension $Pf$ of $f$ into $\Omega$  (the generalized solution to the Dirichlet problem) by 
\begin{equation} \label{harm-rep}
Pf (x) = \int_{\partial \Omega}  f(z) \,  dH^x (z), \qquad x \in \Omega,  
\end{equation}
where $dH^x$ is the harmonic measure at $x$, if the integral in \eqref{harm-rep} exists.  

A solution $u$ to \eqref{ufeqn} satisfies, formally, the equation 
\begin{equation} \label{form-sol}
u (x) = G(u \omega) (x) + Pf (x), \qquad x \in \Omega .
\end{equation}
We remark that if $u\not\equiv +\infty$ satisfies \eqref{form-sol}, 
then it is a superharmonic function in $\Omega$, and  $Pf$ is its greatest harmonic minorant. 
In particular, $u\in L^1_{loc}(\Omega, dx)\cap L^1_{loc} (\Omega, d\omega)$, and  $u< +\infty$ q.e., that is, quasi-everywhere with respect to the Green capacity, see \cite{AG}, \cite{Lan}.  

We also consider more general equations with an arbitrary positive harmonic function $h$ in place of $Pf$ (see 
\S \ref{sec2} and \S \ref{sec3}), when irregular boundary points may come into play.

For an appropriate function $g$ on $\Omega$, we define 
\begin{equation} \label{defT}
  T g (x) = G(g \omega)(x) = \int_{\Omega} G(x,y) \, g(y) \, d \omega(y) , 
\end{equation}
for $x \in \Omega$, so that equation \eqref{form-sol} becomes $(I-T)u = Pf$, with formal solution 
\begin{equation} \label{ufsolndef}
  u_f = \sum_{j=0}^{\infty} T^j (Pf) . 
\end{equation}
This \textit{minimal} solution $u_f$ of \eqref{ufeqn} satisfies 
\begin{equation} \label{minsolnform}
u_f (x) = G(u_f \omega) (x) + Pf (x), \qquad x \in \Omega , 
\end{equation}
if $u_f\not\equiv +\infty$. 
Under conditions which guarantee the finiteness of the right side of equation (\ref{ufsolndef}) (see Theorem \ref{mainufest}  and Theorem \ref{gaugecrit}), we will see that $u_f$ defined by (\ref{ufsolndef}) gives a (generalized) solution of (\ref{ufeqn}). 

It was shown in \cite{FV3}, Lemma 2.5, that the following are equivalent: for $\beta >0$, 
\begin{equation} \label{normTless1}
T \,\, \mbox{is bounded on} \,\, L^2 (\Omega, \omega) \,\, \mbox{with} \,\, \Vert T \Vert = \Vert T \Vert_{L^2 (\Omega, \omega) \rightarrow L^2 (\Omega, \omega)} \le \beta^2 
\end{equation}
and
\begin{equation} \label{equivnormTless1}
\Vert \varphi \Vert_{L^2 (\omega)} \leq \beta \, \Vert \nabla \varphi \Vert_{L^2 (dx)}, \,\, \mbox{for all} \,\, \varphi \in C^{\infty}_0 (\Omega) .
\end{equation}

Our results are expressed in terms of the \textit{Martin kernel} $M(x,z)$. In a bounded uniform domain $\Omega\subset\R^n$, the Martin boundary $\triangle$ is homeomorphic to the Euclidean boundary $\partial \Omega$  (\cite{Aik}, Corollary 3;  see \cite{HW}, \cite{AG}, for a bounded Lipschitz domain, and  \cite{JK}, \cite{Ken} 
 for an NTA domain.) Martin's kernel, defined with respect to a reference point $x_0 \in \Omega$, is given by 
\begin{equation} \label{martin-K}
M(x, z)=\lim_{y\to z, \, \,  y\in \Omega} \frac{G(x, y)}{G(x_0, y)}, \quad x \in \Omega, \, \, z \in \partial \Omega,
\end{equation}
where the limit exists, and is a minimal harmonic function in $x \in \Omega$.  We will see in \S \ref{sec2} that 
\begin{equation} \label{harmmeasiden}
 dH^x (z) = M(x,z) \, dH^{x_0} (z) ,    \qquad (x, z) \in \Omega\times\partial \Omega ,   
\end{equation}
for uniform domains (see \cite{HW}, p. 519; \cite{CZ}, p. 137 for Lipschitz domains;  
 \cite{JK}, pp. 104, 115 for NTA domains). Combining  \eqref{harm-rep} and \eqref{harmmeasiden} yields
\begin{equation} \label{pf-rep-martin}
Pf (x) = \int_{\partial \Omega}  M(x,z) \, f(z)  \, dH^{x_0} (z), \quad x \in \Omega, 
\end{equation}
for Borel measurable $f\ge 0$, whenever the integral exists. Hence, \eqref{ufsolndef} yields
\begin{equation} \label{ufrepresent}
  u_f (x) =  \int_{\partial \Omega} \sum_{j=0}^{\infty} T^j  M(\cdot,z) (x)\, f(z)  \, dH^{x_0} (z), \quad x \in \Omega. 
	\end{equation}
We define 
\begin{equation} \label{Martin-schro}
\mathcal{M}(x, z)= \sum_{j=0}^{\infty} T^j M(\cdot, z) (x), \qquad (x, z) \in \Omega\times\partial \Omega , 
\end{equation}
and  
\begin{equation} \label{harm-schro}
d \mathcal{H}^x(z) =\mathcal{M}(x, z) \,  dH^{x_0} (z), \qquad (x, z) \in \Omega\times\partial \Omega. 
\end{equation}
Then \eqref{ufrepresent} gives 
\begin{equation} \label{ufrepresent2}
\begin{aligned}
 u_f(x) &=  \int_{\partial \Omega} \mathcal{M}(x,z)\, f(z)  \, dH^{x_0} (z) \\ & = \int_{\partial\Omega} f(z) \, d \mathcal{H}^x(z), \qquad x\in \Omega.
 \end{aligned}
\end{equation}

Comparing this last equation with equation \eqref{harm-rep}, we see that $ d \mathcal{H}^x$ is harmonic measure for the Schr\"{o}dinger operator $-\triangle -\omega$. 

By \eqref{Martin-schro}, 
\begin{align*} \mathcal{M}(x,z) &= M(x,z) + \sum_{j=1}^{\infty} T^j M (\cdot, z) (x) \\
& = M(x,z) + T \mathcal{M}(\cdot, z)(x)\\
& = M(x,z) + G (\mathcal{M}(\cdot, z) \omega)(x).   
\end{align*}
Hence $\mathcal{M}(x, z)$ is a superharmonic function of $x \in \Omega$, and $M(x,z)$ is its greatest harmonic minorant,  for every $z\in \partial \Omega$, provided $M(\cdot,z)\not\equiv\infty$. In fact, $\mathcal{M}(\cdot, z)$ is $\omega$-harmonic, i.e., it satisfies the Schr\"odinger equation 
$-\triangle u=\omega \, u$ in $\Omega$.

Notice that $\mathcal{H}^x$ defined by \eqref{harm-schro} is not a probability measure on $\partial \Omega$ unless $\omega=0$. Letting $f\equiv 1$ on $\partial\Omega$, we see by \eqref{ufrepresent2} that $\mathcal{H}^x$ is a finite measure on $\partial \Omega$ if and only if 
$u_1(x)<\infty$, where $u_1$ is the so-called gauge function defined by \eqref{gauge-def} below (see Corollary \ref{cor} for conditions under which $u_1<\infty$ $d\omega$-a.e.).

We remark that for the normalized version of  $\mathcal{M}(x, z)$ defined by 
\[
 \widetilde{\mathcal{M}}(x, z)=\frac{\mathcal{M}(x, z)}{\mathcal{M}(x_0, z)}, \qquad (x, z) \in \Omega\times\partial \Omega , 
\] 
where $x_0\in \Omega$ is to be chosen so that $\mathcal{M}(x_0, z)<\infty$ for every $z \in \partial \Omega$, we have 
\[
d \mathcal{H}^x(z) =  \widetilde{\mathcal{M}}(x, z)
 \,  
d \mathcal{H}^{x_0} (z), \qquad (x, z) \in \Omega\times\partial \Omega , 
\] 
which is analogous to \eqref{harmmeasiden}.  Obviously, $\widetilde{\mathcal{M}}(x_0, z)=1$, as for the unperturbed Martin's kernel $M(x,z)$. Moreover, \textit{formally} we have 
\[
 \widetilde{\mathcal{M}}(x, z)=\lim_{y\to z, \, y \in \Omega}\frac{\mathcal{G}(x, y)}{\mathcal{G}(x_0, y)}, 
\qquad (x, z) \in \Omega\times\partial \Omega ,
\]
where $\mathcal{G}(x, y)$ is the minimal Green's function associated with  the 
Schr\"{o}dinger operator $-\triangle -\omega$  (see  \cite{FNV}).  Thus,  
$\widetilde{\mathcal{M}}(x, z)$ 
serves the role of the (normalized) Martin 
kernel associated with  the Schr\"{o}dinger operator $-\triangle -\omega$.

Nevertheless, we prefer to use the  kernel $\mathcal{M}(x, z)$, since it does not exclude the case  $\mathcal{M}(x_0, z)=\infty$, and is more  convenient in applications. Pointwise estimates of 
$\widetilde{\mathcal{M}}(x, z)$  are deduced easily from the estimates 
of $\mathcal{M}(x, z)$ discussed below.

Our bilateral estimates  of $\mathcal{M}(x, z)$ (see \eqref{upperTM} and \eqref{lowerTM} below)    
are stated in terms of exponentials:  
\begin{equation} \label{exp-term}
\begin{aligned}
 M(x,z) & \, e^{\int_{\Omega} G(x,y) \, \frac{M(y,z)}{M(x,z)} \, d \omega (y)} \le \mathcal{M}(x, z)
 \\ & \le   M(x,z) \, e^{C\int_{\Omega} G(x,y) \, \frac{M(y,z)}{M(x,z)} \, d \omega (y)} , 
  \end{aligned}
 \end{equation}
for all $(x, z) \in \Omega\times\partial \Omega$,
with an appropriate constant $C>0$.  We remark that 
\[
\mathcal{M}(x, z) = U(x, z) \, M(x,z), \qquad (x, z) \in \Omega\times\partial \Omega ,
\]
where
\begin{equation} \label{cond-gauge-def}
U(x, z) = 1+ \frac{1}{M(x,z)}\sum_{j=1}^\infty  T^j M(\cdot, z) (x) , \qquad (x, z) \in \Omega\times\partial \Omega ,
\end{equation}
is the so-called \textit{conditional gauge} (\cite{CZ}, Sec. 4.3). 

From \eqref{exp-term} it is immediate that 
\begin{equation} \label{exp-gauge}
 e^{\int_{\Omega} G(x,y) \, \frac{M(y,z)}{M(x,z)} \, d \omega (y)} \le U(x, z)
 \le   e^{C\int_{\Omega} G(x,y) \, \frac{M(y,z)}{M(x,z)} \, d \omega (y)} , 
 \end{equation} 
 for all $(x, z) \in \Omega\times\partial \Omega$. We emphasize that in the exponents of \eqref{exp-gauge} 
we only use  the \textit{first term} in the sum on the right-hand side of \eqref{cond-gauge-def}.

A probabilistic definition of the 
conditional gauge in the case  $d\omega =q \, dx$ ($q \in L^1_{loc} (\Omega)$) is provided by 
\[
U(x, z)= {E}_z^{x} \left[ e^{\int_0^{\zeta} q(X_t) \, dt}\right],   \qquad (x, z) \in \Omega\times\partial \Omega ,
 \]
where $X_t$ is a  path  of the Brownian motion (properly scaled to 
replace   $\frac{1}{2} \triangle$ used in the probabilistic literature with  $\triangle$) starting at $x$,   ${E}_z^{x}$ is the conditional expectation conditioned on the event that $X_t$  exits  $\Omega$ at $z\in \partial\Omega$, and $\zeta$ is the
time when $X_t$  first hits $z$. Properties of the conditional gauge for  potentials $q$ in Kato's class 
in a bounded Lipschitz domain $\Omega$  are discussed in \cite{CZ}, Ch. 7; in particular, $U(x, z)\approx 1$ if  $U(x, z)\not\equiv +\infty$. 

For general $\omega \ge 0$, we clearly have $U(x, z)\ge 1$, but $U(x, z)$ 
  is  no longer uniformly bounded from above, even if $U(x, z) \not\equiv +\infty$ and $||T||<1$. Consequently,  
  the so-called Conditional Gauge Theorem fails in this setup. 
  
\begin{Thm} \label{mainufest}  Let $\Omega\subset \R^n$ be a bounded uniform domain, $\omega$ a locally finite Borel measure on $\Omega$, and $f \geq 0$ a Borel measurable function on $\partial \Omega$.  

\vspace{0.1in}

(A)  If $\Vert T \Vert <1$ (equvalently, (\ref{equivnormTless1}) holds with $\beta <1$), then there exists a positive constant $C$ depending only on $\Omega$ and $\Vert T \Vert$ such that 
\begin{equation} \label{ptwiseupperbnd}
u_f (x) \leq  \int_{\partial \Omega} e^{C \int_{\Omega} G(x,y) \frac{M(y,z)}{M(x,z)} d \omega (y)} f(z) \, dH^x(z), \quad x \in \Omega .
\end{equation}

\vspace{0.1in}

(B) If $u$ is a positive  solution of \eqref{ufeqn}, then $\Vert T \Vert \leq 1$ (equivalently, (\ref{equivnormTless1}) holds for some $\beta \leq 1$) and 
\begin{equation} \label{ptwiselowerbnd}
  u (x) \geq  \int_{\partial \Omega} e^{\int_{\Omega} G(x,y) \frac{M(y,z)}{M(x,z)} d \omega (y)} f(z) \, dH^x(z), \quad x \in \Omega . 
\end{equation}
\end{Thm}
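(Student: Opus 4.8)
The plan is to reduce both parts of Theorem \ref{mainufest} to the bilateral pointwise bound \eqref{exp-term} for the perturbed Martin kernel $\mathcal{M}(x,z)$, used together with the Neumann-series representation \eqref{ufrepresent2} of $u_f$ and the harmonic-measure identity $dH^x(z)=M(x,z)\,dH^{x_0}(z)$ from \eqref{harmmeasiden}. Once \eqref{exp-term} is available, (A) and (B) are short; the substantive work is \eqref{exp-term} itself, together with two routine facts needed in (B): that a positive solution forces $\|T\|\le 1$, and that any positive solution dominates $u_f$.

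To prove \eqref{exp-term} I would fix $z\in\partial\Omega$, set $m=M(\cdot,z)$, and pass to the conditional Green operator $\widetilde T g=m^{-1}T(mg)$, whose kernel is $\widetilde G(x,y)=G(x,y)m(y)/m(x)$. By \eqref{Martin-schro}, $\mathcal{M}(\cdot,z)=m\sum_{j\ge 0}\widetilde T^{\,j}1$ with $\widetilde T 1=Tm/m=\int_\Omega G(\cdot,y)\tfrac{m(y)}{m(\cdot)}\,d\omega(y)$, so \eqref{exp-term} is equivalent to $e^{\widetilde T1}\le\sum_{j\ge 0}\widetilde T^{\,j}1\le e^{C\widetilde T1}$. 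Since $g\mapsto mg$ is an isometry of $L^2(m^2\,d\omega)$ onto $L^2(\omega)$ intertwining $\widetilde T$ with $T$, the operator $\widetilde T$ is self-adjoint on $L^2(m^2\,d\omega)$ with the same norm as $T$ on $L^2(\omega)$. The lower bound is elementary: $\widetilde T^{\,j}1\ge(\widetilde T1)^j/j!$ follows by induction on $j$ (equivalently, from Jensen's inequality applied to the lifetime of the $m$-conditioned diffusion), and summing gives $e^{\widetilde T1}$. The upper bound is the crux, and is where $\|T\|<1$ and the uniform-domain hypotheses enter: one needs a quantitative estimate for the iterated kernels of $\widetilde T$ — roughly, $\widetilde T\big((\widetilde T1)^k\big)\lesssim k\,(\widetilde T1)^{k+1}$ in a suitable (weighted-$L^2$, or pointwise via a maximum principle) sense — derived from $\|\widetilde T\|_{L^2(m^2 d\omega)}<1$ together with the boundary Harnack principle, Carleson-type estimates for $M$, and the $3G$-inequality for $\widetilde G$; one then sums the resulting exponential series and inverts $I-\widetilde T$ to get $\sum_j\widetilde T^{\,j}1\le e^{C\widetilde T1}$ with $C=C(\Omega,\|T\|)$. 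This iteration estimate is the main obstacle; everything else is bookkeeping once it is in place.

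For (A): assuming $\|T\|<1$ — equivalently \eqref{equivnormTless1} with $\beta<1$, by \cite{FV3}, Lemma 2.5 — I would use \eqref{ufrepresent2} to write $u_f(x)=\int_{\partial\Omega}\mathcal{M}(x,z)\,f(z)\,dH^{x_0}(z)$ (the value $+\infty$ permitted), insert the upper half of \eqref{exp-term}, and convert through $M(x,z)\,dH^{x_0}(z)=dH^x(z)$; this gives \eqref{ptwiseupperbnd}. (If in addition $Pf\in L^2(\omega)$, then $\|T\|<1$ makes $(I-T)^{-1}$ bounded on $L^2(\omega)$, so $u_f<\infty$ $\omega$-a.e.\ and $u_f=G(u_f\omega)+Pf$, i.e.\ \eqref{minsolnform}; this is not needed for the displayed inequality.)

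For (B): let $u\ge 0$, $u\not\equiv+\infty$, solve \eqref{ufeqn}; then $u$ is superharmonic with $u=G(u\omega)+h$, where $h\ge 0$ is harmonic with boundary data $f$, so $h\ge Pf$. First I would obtain $\|T\|\le 1$ by the ground-state (Allegretto--Piepenbrink) substitution: since the Riesz measure of $u$ is $u\,\omega$, for $\varphi\in C_0^\infty(\Omega)$ one integrates by parts against the test function $\varphi^2/\min(u,k)$, uses $2\,\varphi\,\nabla\varphi\cdot\nabla u/u\le|\nabla\varphi|^2+\varphi^2|\nabla u|^2/u^2$, and lets $k\to\infty$, obtaining $\int_\Omega\varphi^2\,d\omega\le\int_\Omega|\nabla\varphi|^2\,dx$; this is \eqref{equivnormTless1} with $\beta\le 1$, hence $\|T\|\le 1$ by \cite{FV3}, Lemma 2.5. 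Next, $u\ge u_f$: iterating $u=Tu+h\ge Tu+Pf$ gives $u\ge\sum_{j=0}^N T^{\,j}(Pf)+T^{\,N+1}u\ge\sum_{j=0}^N T^{\,j}(Pf)$ for every $N$, so $u\ge\sum_{j\ge 0}T^{\,j}(Pf)=u_f$. Finally, combining $u\ge u_f=\int_{\partial\Omega}\mathcal{M}(x,z)\,f(z)\,dH^{x_0}(z)$ with the lower half of \eqref{exp-term} and then \eqref{harmmeasiden} yields \eqref{ptwiselowerbnd}.
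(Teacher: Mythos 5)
Your reduction of both parts to the bilateral bound \eqref{exp-term} is exactly the paper's strategy, and your bookkeeping for (A) and (B) (inserting the bound into \eqref{ufrepresent2}, converting via \eqref{harmmeasiden}, proving $\Vert T\Vert\le 1$ and $u\ge u_f$ by iteration) is sound; your ground-state substitution for $\Vert T\Vert\le 1$ is a legitimate alternative to the paper's simpler route ($Tu\le u$ with $0<u<\infty$ $d\omega$-a.e.\ plus Schur's test). However, there is a genuine gap at the step you yourself call ``the main obstacle'': the upper half of \eqref{exp-term}, i.e.\ $\sum_{j\ge 0}\widetilde T^{\,j}1\le e^{C\widetilde T 1}$, is never proved. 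You only gesture at an iteration estimate of the form $\widetilde T\bigl((\widetilde T1)^k\bigr)\lesssim k\,(\widetilde T1)^{k+1}$ to be ``derived from'' the boundary Harnack principle, Carleson estimates and a $3G$-inequality, without an argument; since this exponential upper bound is the entire analytic content of part (A), the proof is incomplete. The paper closes exactly this gap by (i) proving that $\tilde m(\cdot)=M(\cdot,z)$ is a quasi-metric modifier for $G$ with quasi-metric constant independent of $z\in\partial\Omega$ --- this is Lemma \ref{Martinkernelquasimetric}, obtained from the known fact that $m(x)=\min(1,G(x,x_0))$ is a modifier in a uniform domain, the Hansen--Netuka lemma (Lemma \ref{hansen}) on puncturing a quasi-metric at $x_1$, and the limit $x_1\to z$ defining $M(\cdot,z)$ --- and then (ii) invoking Theorem \ref{FNVTheorem}(A) (from \cite{FNV}), which yields precisely $\sum_j T^j\tilde m\le \tilde m\, e^{C\,T\tilde m/\tilde m}$ with $C=C(\kappa,\Vert T\Vert)$. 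Without either this modifier lemma or some substitute proof of the iteration estimate, your argument does not establish \eqref{ptwiseupperbnd}.

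A secondary point: the lower bound $\widetilde T^{\,j}1\ge(\widetilde T1)^j/j!$ does not ``follow by induction on $j$'' for a general positive kernel; some structural input is required. Your parenthetical Jensen argument for the $M(\cdot,z)$-conditioned process is the right idea, but in the present generality (arbitrary locally finite $\omega$) it needs justification --- the paper instead gets the lower bound from Theorem \ref{FNVTheorem}(B) (constant $c=c(\kappa)$), and upgrades to $c=1$ by the Maria--Frostman domination principle combined with \cite{GV2}, Theorem 1.2, which is the analytic counterpart of your probabilistic claim. So the lower half of \eqref{exp-term} is fixable as you indicate, but it should be routed through the domination principle (or the quasi-metric structure) rather than asserted by induction.
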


In view of \eqref{ufrepresent2}, Theorem \ref{mainufest} gives estimates for the Schr\"{o}dinger harmonic measure $d \mathcal{H}^x$ in terms of the harmonic measure $dH^{x}$ for the Laplacian.

The solution $u_1$ of (\ref{ufeqn}),  in the case where $f$ is identically $1$ on $\partial \Omega$, is called the (Feynman-Kac) \textit{gauge}:
\begin{equation} \label{gauge-def}
  u_1 = 1 + \sum_{j=1}^{\infty} T^j 1 , 
\end{equation}
provided $u_1\not\equiv +\infty$. An equivalent  probabilistic interpretation of the gauge when  $d\omega =q(x) \, dx$ ($q \in L^1_{loc} (\Omega)$, $q \ge 0$) is given by (see \cite{CZ}, Sec. 4.3)
\[
u_1(x)= {E}^{x} \left[ e^{\int_0^{\tau_\Omega} q(X_t) \, dt}\right],   \qquad x \in \Omega ,  
\]
where $X_t$ is the Brownian path  (properly scaled as above) starting at $x$,  ${E}^{x} $ is the expectation operator,  and $\tau_\Omega$ is the exit time from $\Omega$. Notice that $u_1$ given by \eqref{gauge-def} is related 
to the conditional gauge $U(x,z)$ defined by  \eqref{cond-gauge-def} via 
the equation 
\[
u_1(x)= \int_{\partial\Omega}  U(x, z) \, dH^x(z), \qquad x \in \Omega . 
\]
In particular, 
\[
\inf_{z \in \partial \Omega}  U(x, z) \le u_1(x ) \le\sup_{z \in \partial \Omega}   U(x, z), \qquad x \in \Omega . 
\]
 
  The following theorem gives sufficient and matching necessary criteria for the existence of $u_f$.  For Martin's kernel $M(x,z)$, we define the adjoint operator $M^*$ for a Borel measure $\mu$ on $\Omega$ by 
\begin{equation} \label{defMstar}
M^* \mu (z) = \int_{\Omega} M(x,z) \, d \mu (x), \quad \mbox{for} \,\, z \in \partial \Omega.
\end{equation}
The role of $M^*$ in the following theorem is analogous to the role of the balayage operator $P^*$ in \cite{FV2} for $C^{1,1}$ domains $\Omega$, where  
all integrals over $\partial \Omega$ are taken with respect to surface area in place of harmonic measure.

\begin{Thm} \label{gaugecrit} Suppose $ \Omega \subset \R^n$ is a bounded uniform domain, 
$\omega$ is a locally finite Borel measure on $\Omega$, and $f \geq 0$ ($f$ not a.e. $0$ with respect to harmonic measure) is a Borel measurable function on $\partial \Omega$.  Let $x_0 \in \Omega$ be the reference point in the definition of Martin's kernel.   Let $m(x) = \min (1, G(x,x_0))$.  

\vspace{0.1in}

(A)  There exists $C>0$ ($C$ depending only on $\Omega$ and $\Vert T \Vert$) such that if $\Vert T \Vert <1$ (equivalently, (\ref{equivnormTless1}) holds with $\beta <1$) and 
\begin{equation} \label{martincritsuff}
 \int_{\partial \Omega} e^{C M^* (m \omega)} \,  f \, dH^{x_0} < \infty , 
\end{equation}
then $u_f \in L^1_{loc} (\Omega, dx)$.  

\vspace{0.1in}

(B) If $u_f \in L^1_{loc} (\Omega, dx) $, then $\Vert T \Vert \leq 1$ and  
\begin{equation} \label{martincritnec}
 \int_{\partial \Omega} e^{M^* (m \omega)} \,  f  \, dH^{x_0} < \infty . 
\end{equation}

\end{Thm}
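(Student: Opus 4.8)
The plan is to derive Theorem \ref{gaugecrit} as an \emph{integrated} version of the pointwise bilateral bounds \eqref{exp-term} (Theorem \ref{mainufest}): one averages those bounds against the measure $m(x)\,dx$ over $\Omega$, and the $x$-dependent potential $v_z(x):=\int_\Omega G(x,y)\,\dfrac{M(y,z)}{M(x,z)}\,d\omega(y)$ in the exponent then ``collapses'' to the balayage $M^{*}(m\omega)(z)$, which no longer depends on $x$. Throughout, \eqref{exp-term} reads $M(x,z)\,e^{v_z(x)}\le\mathcal M(x,z)\le M(x,z)\,e^{C v_z(x)}$ with $C=C(\Omega,\Vert T\Vert)$. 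Since $e^{CM^{*}(m\omega)}\ge1$, \eqref{martincritsuff} forces $f\in L^1(dH^{x_0})$; and in case (B), $u_f\in L^1_{loc}(\Omega,dx)$ means $u_f\not\equiv+\infty$, so $u_f$ is the finite superharmonic function produced by the increasing partial sums of \eqref{ufsolndef}, it satisfies \eqref{minsolnform}, i.e.\ it is a positive solution of \eqref{ufeqn}, hence $\Vert T\Vert\le1$ by Theorem \ref{mainufest}(B), and $f\in L^1(dH^{x_0})$ again because $0\le Pf\le u_f$.

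\emph{Step 1: reduce to a weighted integral.} First I would show that $u_f\in L^1_{loc}(\Omega,dx)\iff\int_\Omega u_f(x)\,m(x)\,dx<\infty$. The implication ``$\Leftarrow$'' is immediate ($m>0$ on $\Omega$, and a nonnegative superharmonic function that is finite a.e.\ lies in $L^1_{loc}$). For ``$\Rightarrow$'' I would use the Riesz decomposition $u_f=G(u_f\omega)+Pf$, the fact that $u_f\omega$ is locally finite (else $G(u_f\omega)\equiv+\infty$), the uniform bound $\int_\Omega M(x,z)\,m(x)\,dx\le C_1$, and the Green-function estimate
\[
c_0\,m(y)\ \le\ \int_\Omega G(x,y)\,m(x)\,dx\ \le\ C_0\,m(y),\qquad y\in\Omega,
\]
which is a standard consequence of the Green/Martin estimates in bounded uniform domains (using $m\le1$, $G\mathbf 1\le\Vert G\mathbf 1\Vert_\infty$, and $G\mathbf 1(y)\asymp G(y,x_0)\asymp m(y)$ near $\partial\Omega$); a short Fubini computation then bounds $\int_\Omega Pf\,m\,dx$ by $C_1\Vert f\Vert_{L^1(dH^{x_0})}$ and $\int_\Omega G(u_f\omega)\,m\,dx=\int_\Omega(\int_\Omega G(x,y)m(x)\,dx)\,u_f(y)\,d\omega(y)$ via the displayed estimate, the local finiteness of $u_f\omega$, and Harnack's inequality near $x_0$ (to handle the part of $\Omega$ where $G(\cdot,x_0)<1$). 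Granting the claim, Fubini in \eqref{ufrepresent2} gives
\[
\int_\Omega u_f(x)\,m(x)\,dx=\int_{\partial\Omega}\Big(\int_\Omega\mathcal M(x,z)\,m(x)\,dx\Big)\,f(z)\,dH^{x_0}(z),
\]
so the theorem reduces to two-sided control, uniform in $z\in\partial\Omega$, of $\int_\Omega\mathcal M(x,z)\,m(x)\,dx$.

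\emph{Step 2: the collapsing estimate (the main obstacle).} The crux is to prove that for each fixed $a>0$ there exist $A_1=A_1(\Omega)>0$ and $A_2=A_2(\Omega,a)>0$ with
\[
A_1\,e^{\,M^{*}(m\omega)(z)}\ \le\ \int_\Omega M(x,z)\,m(x)\,e^{\,a\,v_z(x)}\,dx\ \le\ A_2\,e^{\,A_2\,M^{*}(m\omega)(z)},\qquad z\in\partial\Omega,
\]
\textbf{with the exponent constant equal to $1$ in the lower bound}. Expanding $e^{a v_z}$ in powers and integrating in $x$ first, this reduces to estimating the moments $a_k(z):=\int_\Omega M(x,z)\,m(x)\,v_z(x)^k\,dx$ against $(M^{*}(m\omega)(z))^k$; writing $a_k(z)=\int_{\Omega^k}I_k(y_1,\dots,y_k;z)\prod_{i=1}^kM(y_i,z)\,d\omega(y_i)$ with $I_k=\int_\Omega M(x,z)^{1-k}m(x)\prod_{i=1}^kG(x,y_i)\,dx$, and $(M^{*}(m\omega)(z))^k=\int_{\Omega^k}\prod_{i=1}^k m(y_i)M(y_i,z)\,d\omega(y_i)$, the required bounds follow from the multilinear Green--Martin inequalities
\[
c_0^{\,k}\prod_{i=1}^k m(y_i)\ \le\ I_k(y_1,\dots,y_k;z)\ \le\ C_0^{\,k}\prod_{i=1}^k m(y_i),
\]
the upper one understood after integration against $\omega^{\otimes k}$ --- this is where $\Vert T\Vert<1$, equivalently the trace inequality \eqref{equivnormTless1}, enters, exactly as in the proof of \eqref{exp-term} (note that $\Vert T\Vert<\infty$ already forces $\omega$ to be nonatomic). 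I expect these multilinear estimates to be the main difficulty: they are proved by iterating the \emph{sharp} $3G$-inequality in a bounded uniform domain together with the boundary Harnack principle --- the same machinery behind Theorem \ref{mainufest} --- and the decisive, most delicate point is that $c_0,C_0$ can be taken \emph{independent of $k$}, which is what converts the geometric-type series into an exponential. The lower bound carries no combinatorial loss, so the constant $1$ is preserved; alternatively one can obtain it by Jensen's inequality applied to the probability measure $M(x,z)m(x)\,dx/\int_\Omega M(\cdot,z)\,m\,dx$, using the displayed Green estimate and the fact that $\int_\Omega M(x,z)m(x)\,dx$ is comparable to a constant uniformly in $z$, together with an exhaustion $\Omega'\uparrow\Omega$ to recover the exact exponent.

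\emph{Step 3: conclude.} Combining Step 2 (with $a=1$, resp.\ $a=C$) and \eqref{exp-term} gives
\[
A_1\,e^{\,M^{*}(m\omega)(z)}\ \le\ \int_\Omega\mathcal M(x,z)\,m(x)\,dx\ \le\ A_2'\,e^{\,A_2''\,M^{*}(m\omega)(z)},\qquad z\in\partial\Omega,
\]
with $A_2',A_2''$ depending only on $\Omega$ and $\Vert T\Vert$. For (A): take $C:=A_2''$; if $\Vert T\Vert<1$ and \eqref{martincritsuff} holds, then by Step 1, $\int_\Omega u_f\,m\,dx\le A_2'\int_{\partial\Omega}e^{A_2''M^{*}(m\omega)}f\,dH^{x_0}<\infty$, hence $u_f<\infty$ a.e., so $u_f$ is superharmonic and lies in $L^1_{loc}(\Omega,dx)$. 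For (B): $\Vert T\Vert\le1$ was already noted, and since $u_f\in L^1_{loc}$, Step 1 and the displayed lower bound give $A_1\int_{\partial\Omega}e^{M^{*}(m\omega)}f\,dH^{x_0}\le\int_\Omega u_f\,m\,dx<\infty$, which is \eqref{martincritnec}.
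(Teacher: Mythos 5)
Your Step 1 is sound: the equivalence $u_f\in L^1_{loc}(\Omega,dx)\iff\int_\Omega u_f\,m\,dx<\infty$ does follow from $G(m\,dx)\approx m$ (upper bound by the $3$-G inequality, lower bound from \eqref{lowerGchiKest}), from $\int_\Omega u_f m\,d\omega<\infty$ whenever $G(u_f\omega)\not\equiv\infty$, and the uniform bound $\int_\Omega M(x,z)m(x)\,dx\le C$ can indeed be extracted by Fatou from $G(m\,dx)\le Cm$. So the theorem is correctly reduced to two-sided bounds on $\int_\Omega\mathcal M(x,z)m(x)\,dx$. The genuine gap is Step 2, which you yourself flag as the main obstacle but do not prove. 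First, the ``multilinear Green--Martin inequalities'' $c_0^k\prod m(y_i)\le I_k\le C_0^k\prod m(y_i)$ with $k$-independent constants are only asserted; the pointwise upper bound is in fact false (already $I_2(y_1,y_2;z)$ blows up on the diagonal $y_1=y_2$ when $n\ge4$, since the $x$-integral of $G(x,y)^2$ diverges locally), so what you actually need is an integrated version against $\omega^{\otimes k}$ in which $\Vert T\Vert<1$ is exploited — but that is essentially a re-derivation of the mechanism behind Theorem \ref{FNVTheorem}/\eqref{upperTM} and no argument is given beyond ``iterate the sharp $3G$-inequality.'' Second, and decisively for part (B): even granting your multilinear lower bound, it yields only $\int_\Omega M(x,z)m(x)e^{v_z(x)}dx\gtrsim e^{c_0M^*(m\omega)(z)}$, where $c_0$ is the constant in $\int_\Omega G(x,y)m(x)\,dx\ge c_0m(y)$. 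That constant cannot be taken $\ge1$ in general (it degenerates, e.g., under dilation of $\Omega$), so your argument proves \eqref{martincritnec} only with $e^{c_0M^*(m\omega)}$, $c_0<1$ — strictly weaker than the stated necessary condition. The claim that ``the lower bound carries no combinatorial loss, so the constant $1$ is preserved'' is therefore wrong: the loss is not combinatorial but multiplicative per factor; the Jensen variant fares no better (it gives exponent $c_0/\int_\Omega M(\cdot,z)m\,dx$), and ``an exhaustion $\Omega'\uparrow\Omega$ to recover the exact exponent'' is not an argument.

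The paper avoids exactly this loss by never averaging the exponent over $\Omega$. It introduces $u_K=\mathcal G\chi_K$ for a fixed compact $K$, uses the domination-principle/\cite{GV2} version of the lower bound to get $e^{Tm(x)/m(x)}\le c_K^{-1}u_K(x)/m(x)$ (i.e., \eqref{lowestuK}--\eqref{Tm-m} with $c=1$), and then lets $x\to z$ at \emph{regular} boundary points via Lemma \ref{conv-lemma}, where $Tm(x)/m(x)\to M^*(m\omega)(z)$ exactly; all losses ($c_K$, $C_K$) appear as multiplicative or additive constants outside the exponential, the irregular points are discarded as a set of harmonic measure zero, and general $\omega$ is reached by exhaustion with compactly supported pieces. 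The quantity $\int_\Omega M(y,z)u_K(y)\,d\omega(y)$ produced by this limit is then tied to $\int_K u_f\,dx$ by the Fubini identity \eqref{intKuf}, which plays the role of your Step 1. Unless you can prove your collapsing estimate with exponent exactly $1$ — which your current tools do not give, and which is not obviously true — the proposal establishes at best a weakened form of (B), while (A) would still require a genuine proof of the integrated multilinear upper bound.
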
 

\bigskip

\noindent {\bf Remark.}
 More general results for equation (\ref{form-sol}) with an arbitrary positive harmonic function $h$ in place of $Pf$, in terms of 
 Martin's representation, are given in Theorem \ref{mainu-harm} and Theorem \ref{u_h-exist}  below. 

\bigskip

For $C^{1,1}$ domains $\Omega$ and absolutely continuous $\omega$, Theorem \ref{mainufest} and an analogue of Theorem \ref{gaugecrit} were proved in the special case $f=1$ in \cite{FV2}, Theorem 1.2. To see this observation, note that for a $C^{1,1}$ domain, $M(x, z) = P(x,z)/P(x_0, z)$, by \eqref{harmmeasiden}, which shows that inequalities (1.12) and (1.14) in \cite{FV2} follow from Theorem \ref{mainufest} above.  To see that (1.10) and (1.13) in \cite{FV2} follow from Theorem \ref{gaugecrit}, choose $x_0$ with dist$(x_0, \partial \Omega) > \delta$, where  $0< \delta < \mbox{diam} ( \Omega) /2$, so that $P(x_0, z)$ is equivalent to a constant depending only on $\Omega$.  An extension  to the case of uniform domains of the criteria in \cite{FV2} for the existence of the nontrivial gauge ($u_1\not\equiv +\infty$) is provided by the following corollary.

\begin{Cor} \label{cor} Suppose $ \Omega \subset \R^n$ is a bounded uniform domain, and $\omega$ is a locally finite Borel measure on $\Omega$.  Let $x_0 \in \Omega$ be the reference point in the definition of Martin's kernel, and $m(x) = \min (1, G(x,x_0))$.  

\vspace{0.1in}

(A)  There exists $C>0$ ($C$ depending only on $\Omega$ and $\Vert T \Vert$) such that if $\Vert T \Vert <1$ and 
\begin{equation} \label{martincritsuff-g}
 \int_{\partial \Omega} e^{C M^* (m \omega)} \,   dH^{x_0} < \infty , 
\end{equation}
then the gauge  $u_1$ is nontrivial.  

\vspace{0.1in}

(B) If the gauge $u_1$ is nontrivial, then $\Vert T \Vert \leq 1$ and 
\begin{equation} \label{martincritnec-g}
 \int_{\partial \Omega} e^{M^* (m \omega)} \, dH^{x_0} < \infty . 
\end{equation}

\end{Cor}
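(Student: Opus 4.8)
The plan is to obtain Corollary \ref{cor} as the special case $f\equiv 1$ of Theorem \ref{gaugecrit}. First I would check that the constant function $f\equiv 1$ on $\partial\Omega$ is admissible in Theorem \ref{gaugecrit}: it is Borel measurable, and it is not $dH^x$-a.e. zero because for a bounded domain the constant $1$ is the Perron--Wiener--Brelot solution of its own Dirichlet problem, so $P1\equiv 1$ and each harmonic measure $H^x$ is a probability measure on $\partial\Omega$; in particular $\int_{\partial\Omega}dH^{x_0}=1$. With this choice of $f$, the solution $u_f$ of \eqref{ufsolndef} coincides with the gauge $u_1$ of \eqref{gauge-def} (here one uses $P1\equiv 1$), and the integrals in \eqref{martincritsuff} and \eqref{martincritnec} reduce exactly to those in \eqref{martincritsuff-g} and \eqref{martincritnec-g}.

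The one point that still needs attention is the translation between the conclusion ``$u_1\in L^1_{loc}(\Omega,dx)$'' appearing in Theorem \ref{gaugecrit} and the conclusion ``$u_1$ is nontrivial'', i.e. $u_1\not\equiv+\infty$, appearing in the Corollary. One implication is immediate, since a locally integrable function is finite a.e. and hence not identically $+\infty$. For the converse I would appeal to the discussion surrounding \eqref{form-sol}--\eqref{minsolnform}: if $u_1\not\equiv+\infty$ then $u_1$ satisfies $u_1=G(u_1\omega)+P1=G(u_1\omega)+1$, so $u_1$ is superharmonic in $\Omega$ with greatest harmonic minorant $1$, and a superharmonic function belongs to $L^1_{loc}(\Omega,dx)$ (see \cite{AG}). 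Thus ``$u_1$ nontrivial'' is equivalent to ``$u_1\in L^1_{loc}(\Omega,dx)$''.

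Granting these preliminaries, part (A) follows: if $\Vert T\Vert<1$ and \eqref{martincritsuff-g} holds, then Theorem \ref{gaugecrit}(A) gives $u_1\in L^1_{loc}(\Omega,dx)$, hence $u_1\not\equiv+\infty$, with $C$ the constant provided there (depending only on $\Omega$ and $\Vert T\Vert$). Part (B) follows: if $u_1\not\equiv+\infty$, then $u_1\in L^1_{loc}(\Omega,dx)$ by the equivalence above, and Theorem \ref{gaugecrit}(B) yields $\Vert T\Vert\le 1$ together with \eqref{martincritnec-g}. Since the entire analytic content is carried by Theorem \ref{gaugecrit}, there is no substantial obstacle here; the only care required is the routine identification of nontriviality of the gauge with its local integrability, which rests on the superharmonicity of $u_1$ recorded in the preliminaries.
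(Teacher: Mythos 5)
Your proposal is correct and follows exactly the route the paper intends: Corollary \ref{cor} is the specialization of Theorem \ref{gaugecrit} to $f\equiv 1$ (so that $Pf\equiv 1$ and $u_f=u_1$), and your identification of ``$u_1$ nontrivial'' with ``$u_1\in L^1_{loc}(\Omega,dx)$'' via the superharmonicity of $u_1$ when $u_1\not\equiv+\infty$ is precisely the point recorded in the paper's discussion in \S\ref{sec2}. No gaps.
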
 

 As an application of Corollary \ref{cor}, we consider elliptic equations of Riccati type with quadratic growth in the gradient, 
\begin{equation}\label{nonlineareqn-1} 
\left\{
\begin{aligned}
-\triangle v & = |\nabla v|\,^2 + \omega \, \, & \mbox{in} \, \,  \Omega \\
v & = 0 \quad & \mbox{on} \, \,  \partial \Omega 
\end{aligned}
\right.  
\end{equation} 
for locally finite Borel measures $\omega$, 
in bounded uniform domains  $\Omega \subset \R^n$.  Although \eqref{nonlineareqn-1} is formally related to equation \eqref{ufeqn} with $f=1$ by the relation $v = \log u$, it is well-known that this formal relation is not sufficient to guarantee equivalence of the two equations (see \S4).  Nevertheless we obtain the following result.

\begin{Thm}\label{riccatithm}  Suppose $\Omega \subset \R^n$ is a bounded uniform domain, and  $\omega$ is a locally finite Borel measure in $\Omega$.    

(A) Suppose $||T||<1$, or equivalently  (\ref{equivnormTless1}) holds with $\beta<1$, and (\ref{martincritsuff-g}) holds with a  large enough constant $C>0$ (depending only on $\Omega$ and $||T||$). Then $v= \log u_1 \in W^{1,2}_{loc} (\Omega)$ is a weak solution of (\ref{nonlineareqn-1}).  
 
(B) Conversely, if (\ref{nonlineareqn-1}) has a weak solution $v\in W^{1,2}_{loc} (\Omega)$, then 
$u=e^v$ is a supersolution to 
 (\ref{ufeqn}) with $f=1$, i.e., $u\ge G(\omega u) +1$. 
Moreover,  $||T||\le 1$, 
or equivalently (\ref{equivnormTless1}) 
holds with $\beta = 1$, and (\ref{martincritnec-g}) holds.  
\end{Thm}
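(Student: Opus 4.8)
The plan is to exploit the formal substitution $v=\log u$ together with the results of Corollary \ref{cor} and Theorem \ref{mainufest}, being careful about the gap between the formal identity and actual (weak/very weak) solvability.

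For part (A), I would start from the hypothesis $\|T\|<1$ and \eqref{martincritsuff-g} with $C$ large. By Corollary \ref{cor}(A) the gauge $u_1$ is nontrivial, so by the discussion around \eqref{minsolnform} it is a superharmonic function satisfying $u_1 = G(u_1\omega)+1$ pointwise, with $u_1\in L^1_{loc}(\Omega,dx)\cap L^1_{loc}(\Omega,d\omega)$ and $u_1<\infty$ q.e.; moreover Theorem \ref{mainufest}(A) gives the pointwise upper bound $u_1(x)\le \int_{\partial\Omega} e^{C\int_\Omega G(x,y)\frac{M(y,z)}{M(x,z)}d\omega(y)}\,dH^x(z)$. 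The key analytic step is to show that $v=\log u_1$ lies in $W^{1,2}_{loc}(\Omega)$ and is a weak solution of \eqref{nonlineareqn-1}. Formally, if $-\triangle u_1 = \omega u_1$ then $-\triangle \log u_1 = -\frac{\triangle u_1}{u_1} + \frac{|\nabla u_1|^2}{u_1^2} = \omega + |\nabla \log u_1|^2$. To make this rigorous I would: (i) show $\log u_1\in W^{1,2}_{loc}$, which follows from the Caccioppoli-type inequality $\int_K \frac{|\nabla u_1|^2}{u_1^2}\,dx <\infty$ obtained by testing the equation $u_1 = G(u_1\omega)+1$ (equivalently $-\triangle u_1 \ge 0$, $u_1$ superharmonic and bounded below away from $0$ by $1$) against $\varphi^2/u_1$ for $\varphi\in C_0^\infty(\Omega)$, using that $u_1\ge 1$ so division is harmless and $u_1\in L^1_{loc}(d\omega)$ controls the measure term; (ii) having $\log u_1 \in W^{1,2}_{loc}$, pass from the potential-theoretic equation for $u_1$ to the weak formulation for $v$ by the standard chain rule in $W^{1,2}_{loc}$ (valid because $u_1\ge 1$), testing $-\triangle u_1 = \omega u_1$ against $\psi/u_1$ for $\psi\in C_0^\infty(\Omega)$, $\psi\ge 0$, which produces exactly $\int \nabla v\cdot\nabla\psi = \int |\nabla v|^2\psi + \int \psi\,d\omega$; the sign of $\psi$ and a density argument then upgrade this to all test functions. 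Here one must know that the part of $\omega$ that is singular with respect to Green capacity does not obstruct this — but since $u_1<\infty$ q.e. and $u_1\in L^1_{loc}(d\omega)$, the measure $\omega$ charges no set of zero capacity in a way that matters, and $u_1\omega$ is a well-defined Radon measure.

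For part (B), I would argue in the reverse direction. Given a weak solution $v\in W^{1,2}_{loc}(\Omega)$ of \eqref{nonlineareqn-1}, set $u=e^v$. Since $v\le 0$ is not assumed pointwise but $v$ vanishes on $\partial\Omega$ in the appropriate (e.g. $W^{1,2}_0$-type or trace) sense, I first note $u\ge 0$ and, testing the equation against $\psi/u = e^{-v}\psi$ (justified since $v\in W^{1,2}_{loc}$ implies $e^{-v}\in W^{1,2}_{loc}$ locally and $\psi$ is compactly supported), I obtain $-\triangle u = u\,\omega + u|\nabla v|^2 - u|\nabla v|^2 = u\,\omega$ as distributions — more precisely, that $u$ is a nonnegative supersolution, $-\triangle u \ge \omega u$ in the distributional sense, because the chain-rule computation for $e^v$ produces $-\triangle e^v = e^v(-\triangle v) - e^v|\nabla v|^2 = e^v\omega \ge 0$; hence $u$ is superharmonic (after lower-semicontinuous regularization) and, by the Riesz decomposition, $u\ge G(\mu)+h$ where $\mu\ge u\omega$ and $h$ is the greatest harmonic minorant, with $h\ge$ the harmonic function with boundary data $1$ coming from the boundary condition on $v$. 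This yields $u\ge G(\omega u)+1$, the asserted supersolution property. Then, since $u=e^v$ is a positive supersolution of \eqref{ufeqn} with $f=1$, I invoke the necessity direction, Theorem \ref{mainufest}(B) / Theorem \ref{gaugecrit}(B): the existence of such a supersolution forces $\|T\|\le 1$ (equivalently \eqref{equivnormTless1} with $\beta=1$) and \eqref{martincritnec-g}. Strictly, Theorem \ref{mainufest}(B) and Theorem \ref{gaugecrit}(B) are stated for solutions, so the remaining point is to note that a positive supersolution $u\ge G(\omega u)+1$ dominates the minimal solution $u_1=\sum_j T^j 1$ (by monotone iteration: $u\ge 1$, and $u\ge G(\omega u)+1\ge G(\omega\cdot 1)+1 = T1+1$, inductively $u\ge \sum_{j=0}^N T^j 1$), so $u_1\le u<\infty$ a.e., i.e. $u_1$ is nontrivial and in $L^1_{loc}$, and then Corollary \ref{cor}(B) applies verbatim.

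The main obstacle I anticipate is the rigorous chain rule in both directions: passing between the potential-theoretic/distributional equation $-\triangle u = \omega u$ (with $\omega$ merely a locally finite measure, possibly singular, and $u$ only superharmonic and locally integrable) and the weak $W^{1,2}_{loc}$ formulation for $v=\log u$. The delicate points are establishing $\log u_1\in W^{1,2}_{loc}$ via the Caccioppoli estimate (which requires the lower bound $u_1\ge 1$ to control $\int \varphi^2|\nabla u_1|^2/u_1^2$ and to justify using $\varphi^2/u_1$ as a test function), and ensuring the measure term $\int \psi\,d\omega$ is correctly reproduced — in particular that no concentrated part of $\omega$ is lost or doubled under the substitution, which is exactly where the hypotheses $u_1\in L^1_{loc}(d\omega)$, $u_1<\infty$ q.e., and the finiteness of the exponential integral in \eqref{martincritsuff-g} do the work. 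I would expect to borrow the corresponding lemmas from \cite{FV2} and \cite{FV3}, adapting them from the $C^{1,1}$/absolutely continuous setting to general uniform domains and measures.
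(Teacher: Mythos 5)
Your overall architecture coincides with the paper's (get the gauge from Corollary \ref{cor}(A) and set $v=\log u_1$ in (A); exponentiate, derive the supersolution inequality, iterate and invoke Corollary \ref{cor}(B) in (B)), but the central analytic step in each direction is asserted rather than proved, and it is exactly the step that constitutes the paper's proof. In (A), testing $-\triangle u_1=\omega u_1$ against $\varphi^2/u_1$ or $\psi/u_1$ presupposes that the pairing $\int_\Omega\nabla u_1\cdot\nabla(\psi/u_1)\,dx$ is meaningful, i.e.\ essentially $u_1\in W^{1,2}_{loc}(\Omega)$; the paper deliberately does not use this (it is true, by \cite{JMV}, but nontrivial), obtains $v\in W^{1,2}_{loc}$ instead from the standard result on logarithms of superharmonic functions bounded below by $1$ (\cite{HKM}, \cite{MZ}), and then works with the truncations $u_k=\min(u_1,e^k)$, $v_k=\min(v,k)\in W^{1,2}_{loc}\cap L^\infty$ and the integration-by-parts formula \eqref{by-parts}. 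The genuinely hard point, which your plan only gestures at (``the measure term is correctly reproduced''), is the limit of $\int_\Omega \frac{h}{u_k}\,d\mu_k$ with $\mu_k=-\triangle u_k$: one needs $u=u_k$ $\mu_k$-a.e., the weak convergence $\mu_k\to\mu$, and above all the comparison $\mu_k\ge\mu$ on $\{u_1\le e^k\}$ (the paper's \eqref{claim-u_k}), proved there via approximation of $G\mu$ by continuous potentials (Fuglede's Lusin-type theorem) and a fine-harmonicity argument. These lemmas cannot simply be ``borrowed from \cite{FV2}'', since that paper assumes smooth domains and absolutely continuous $\omega$; handling general locally finite, possibly singular $\omega$ is the new content. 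You also omit the verification, required by Definition \ref{defveryweakriccati}, that the greatest harmonic minorant of $v$ is zero (in the paper: $0\le g\le v\le G(u_1\omega)$, a Green potential, hence $g=0$).

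In (B) your endgame is the paper's (Schur's test from $u\ge Tu$, monotone iteration $u\ge\sum_{j=0}^N T^j1$ to produce the minimal solution dominated by the supersolution, then Corollary \ref{cor}(B)), but the chain-rule step yielding $-\triangle e^v\ge \omega\, e^v$ is again not justified at the stated regularity: for unbounded $v\in W^{1,2}_{loc}$, $e^v$ and $e^v|\nabla v|^2$ need not be locally integrable, so testing against $e^{-v}\psi$ does not show that $e^v$ is superharmonic. The paper truncates ($v_k=\min(v,k)$, $u_k=e^{v_k}$), proves $\nu_k\ge\nu$ on $\{v\le k\}$ --- for which it needs $\nu(E)=0$ on the infinity set, supplied by the capacity bound $\nu(F)\le 4\,\mathrm{cap}(F)$ from \cite{HMV}, Lemma 2.1 --- deduces $-\triangle u_k\ge e^{v}\chi_{F_k}\,\omega\ge0$, so each $u_k$ is superharmonic, and only then passes to the limit using weak continuity of the Riesz measures $\mu_k\to-\triangle u$. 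Finally, the reason the greatest harmonic minorant of $u$ is at least $1$ is simply that $v\ge0$ pointwise (part of Definition \ref{defveryweakriccati}), hence $u=e^v\ge1$; invoking ``the harmonic function with boundary data $1$ coming from the boundary condition on $v$'' is the wrong mechanism in a possibly irregular uniform domain. In short: correct skeleton and correct identification of the obstacle, but the truncation and measure-theoretic comparison arguments that actually close the gap are missing.
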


\vspace{0.1in}

\noindent{\bf Remarks.} 1. In Theorem \ref{gaugecrit}, $u_f \in L^1_{loc} (\Omega, dx)$ actually yields 
$u_f \in L^1(\Omega, m dx)\cap L^1(\Omega, m d \omega)$, or equivalently 
$G  (u_f \omega) \not\equiv+\infty$. 

 2. For bounded Lipschitz domains $\Omega$, $u_1$ is a ``very weak'' solution in the sense of \cite{MR}. More precisely, 
$u=u_1 -1 $ is a ``very weak'' solution 
to $-\triangle u = \omega u + \omega$ with $u=0$ on $\partial \Omega$. Here one can use  $\phi_1$ in place 
of $m$, where $\phi_1$ is the first eigenfunction of the Dirichlet Laplacian in $\Omega$ (see 
\cite{AAC}, Lemma 3.2). Then 
$u_1 \in L^1(\Omega, \phi_1 dx)$ and $\int_\Omega \phi_1 \, d \omega<+\infty$.

3. Our main results for uniform domains $\Omega$ are based on the exponential bounds for  Green's function $\mathcal{G}(x,y)$ (see Theorem \ref{FNVTheorem} below) obtained in \cite{FNV}. Here $\mathcal{G}(x,y)$ is  the kernel of  
 the operator $(I-T)^{-1}$ 
defined by \eqref{defGreenSchr}, where $T$ is an integral operator with positive quasi-metric kernel. 
The case of $C^{1,1}$ domains $\Omega$ and $d \omega =q \, dx$, where $q\in L^1_{loc}(\Omega, dx)$, was treated earlier in \cite{FV1} for 
small $||T||$, and in \cite{FV2} for $||T||<1$.

4. In the special case of Kato class potentials, or more generally, $G$-bounded perturbations $\omega$ for the Schr\"{o}dinger 
operator  $-\triangle -\omega$, it is known that $\mathcal{G}(x,y)\approx G(x, y)$. In this case,  
the gauge $u_1$ exists, and is uniformly bounded,  if and only if $||T||<1$ (see \cite{CZ}, \cite{Han1}, \cite{Pin}).    
 
 5. For the fractional Schr\"{o}dinger operator $(-\triangle)^{\frac{\alpha}{2}} -\omega$, criteria of the existence of the gauge $u_1$  in the case $0<\alpha<2$  were obtained in
  \cite{FV3}. They are quite different from Corollary \ref{cor} 
 and require no extra boundary restrictions on $\Omega$ like  \eqref{martincritsuff-g}, 
 \eqref{martincritnec-g} in the case $\alpha=2$. 

\section{Pointwise estimates for $u_f$}\label{sec2}

Recall that the Martin kernel is defined by \eqref{martin-K}.  Then 
$M(x, z)$ is a H\"older continuous function in $z\in \partial \Omega$ (\cite{Aik}, Theorem 3). 
It is worth mentioning that in uniform domains, harmonic measure may vanish on some surface balls, and so the Radon-Nykodim derivative formula $M(x,z)=\frac{dH^x }{dH^{x_0}}(z)$, which holds for NTA domains, is no longer available as a means to recover \eqref{martin-K} at every point $z \in \partial \Omega$. Instead, it can be determined via \eqref{martin-K},  so that  \eqref{harmmeasiden} still holds (see \cite{Aik}, p. 122). 

In this case, the Martin representation for every nonnegative harmonic function $h$ in $\Omega$ can be expressed  
in the form 
\begin{equation} \label{martin-rep}
h(x)= \int_{\partial \Omega} M(x,z) \, d \mu_h (z), \qquad x \in \Omega,
\end{equation}
where $\mu_h$ is a  finite Borel measure on $\partial \Omega$ uniquely determined by $h$. 

The connection between  Martin's kernel and harmonic measure in a uniform domain is provided by  the equation (see \cite{Aik}, p. 142):  
\begin{equation} \label{hm-mu1}
dH^x (z) = M(x,z) \, d \mu_1 (z), \qquad x \in \Omega, \, z \in \partial \Omega.
\end{equation}
Here $\mu_1$ is the representing measure in  \eqref{martin-rep} for the function $h \equiv 1$. 

Equation  \eqref{hm-mu1} can be justified using  \cite{AG}, Theorem 9.1.7 
(in the special case $h \equiv 1$) for a bounded domain whose Martin boundary $\triangle$ is identified 
with $\partial \Omega$. It yields that, for every $f \in C(\partial \Omega)$, its harmonic extension $Pf$ 
 via harmonic measure  \eqref{harm-rep} can be  represented  in the form 
\begin{equation} \label{pf-rep}
Pf (x) = \int_{\partial \Omega}  M(x,z) \, f(z)  \, d \mu_1 (z), \quad x \in \Omega. 
\end{equation}
By the uniqueness of the representing measure in \eqref{harm-rep} 
for all $f \in C(\partial \Omega)$, it follows that \eqref{hm-mu1} holds. 

In particular, since $M(x_0,z) =1$ for all $z \in \partial \Omega$, 
letting $x=x_0$ in \eqref{hm-mu1} yields $dH^{x_0}= d \mu_1$,
and consequently \eqref{harmmeasiden} holds.

Let $\Omega$ be a bounded uniform domain in $\R^n$, $\omega$ a
finite Borel measure in $\Omega$, and $f\ge 0$ a Borel measurable function in $\partial \Omega$ integrable with respect to harmonic measure.  
We consider solutions $u$ to \eqref{ufeqn} understood in the \textit{potential theoretic}  sense.  
Namely, a function $u: \Omega \rightarrow [0, +\infty]$ is said to be a solution to  \eqref{ufeqn} if 
$u$ is \textit{superharmonic} in $\Omega$ ($u\not\equiv+\infty$),   and  
\begin{equation} \label{u-def}
 u(x) = G(u \omega)(x) + Pf (x), \qquad \text{for all} \, \, x \in \Omega , 
\end{equation}
where $Pf$ is the harmonic function defined by \eqref{harm-rep}. Then $Pf$ is the greatest harmonic minorant of $u$, and 
$u \in L_{loc}^1(\Omega, \omega)$, so that $u \, d \omega$ is the associated Riesz measure of $u$, where  
$-\triangle u = \omega u$ 
in the distributional sense. 
In fact, if 
a potential theoretic solution to \eqref{u-def} exists, then $u \in L^1(\Omega, m \omega)$, where $m(x)=\min (1, G(x, x_0))$ 
for some $x_0 \in \Omega$;  
otherwise $G(u \omega)\equiv +\infty$ (see \cite{AG}, Theorem 4.2.4). 

We note that all potential theoretic solutions are by definition 
lower semicontinuous functions in $\Omega$. For a superharmonic function $u$, it is enough to require that equation \eqref{u-def} holds $dx$-a.e. Moreover, in a bounded uniform domain, any 
potential theoretic solution   $u\in L^1(\Omega, m dx)$. This is not difficult to see using the estimate 
$G (m dx) \le C \, m$ in $\Omega$, which is a consequence of the 
so-called $3$-G inequality (see \cite{CZ}, \cite{Han1}, \cite{Han2}, \cite{Pin}). We remark that in $C^2$ domains $Pf$ is the Poisson integral, and 
in fact $u\in L^1(\Omega, dx)$  (see \cite{FV2}, \cite{MV}, Theorem 1.2.). The latter 
 is no longer true for bounded Lipschitz domains (see, e.g., \cite{MR}). 

Another useful way to define a solution of \eqref{ufeqn}  is to require that  \eqref{u-def} hold
  $d \omega$-a.e.  More precisely, a measurable function $0\le u <+\infty$ $d\omega$-a.e. is said to be a solution 
of \eqref{ufeqn} with respect to $\omega$ if 
\begin{equation} \label{u-omega}
 u = G(u \omega) + Pf  \qquad d \omega\text{-a.e.} \, \, \text{in}\,\, \Omega .
 \end{equation}
If  such a solution exists, then obviously $u \in L^1_{loc} (\Omega, \omega)$, and in fact, as above, $u \in L^1(\Omega, m \omega)$.

We remark that if $f \not= 0$ (with respect to $d H^{x}$), and \eqref{u-omega} has a positive solution in this sense, then $||T||\le 1$ by Schur's lemma, 
and consequently  \eqref{equivnormTless1} holds for $\beta=1$. It follows that 
$\omega (K) \le \text{cap} (K)$ for any compact set $K \subset \Omega$. In particular, 
$\omega$ must be absolutely continuous with respect to the Green (or Wiener) capacity, i.e., 
\begin{equation} \label{abs-cap}
\text{cap} (K)=0 \, \Longrightarrow \, \omega (K)=0.
\end{equation} 
(See details in \cite{FNV}, \cite{FV2}.)

A connection between these two approaches is provided by the following claim used below. If $u$  is a solution of \eqref{u-omega} (with respect to $\omega$), then there exists a unique superharmonic function $\hat u \ge 0$ in $\Omega$ such $\hat u=u$ $d \omega$-a.e. in $\Omega$, and 
 $\hat u \in L^1_{loc} (\Omega, \omega)$ is a potential theoretic solution that satisfies \eqref{u-def}. 

Indeed, let $\hat u:= G(u \omega) + Pf $ everywhere in $\Omega$. Then $\hat u= u$ $d \omega$-a.e.  by \eqref{u-omega}, $\hat u \in L_{loc}^1(\Omega, \omega)$, and consequently 
\[
\hat u(x) =G(u \omega) (x) + Pf (x) = G(\hat u \omega) (x)+ Pf (x) \quad \text{for all} \,\, x \in \Omega .
\]
Clearly, $\hat u$   is superharmonic since $G(u \omega)<+\infty$ $d \omega$-a.e., and hence  $G(u \omega)$ is a Green potential, and 
$Pf$ is the greatest harmonic minorant of $\hat u$. Thus, $\hat u$ is a potential theoretic solution. 

Moreover, such a superharmonic solution $\hat u$ is unique:   if $\hat v$ is a superharmonic solution to \eqref{u-def} for which $\hat v = u$ $d \omega$-a.e., it follows that 
\[
\hat v = G(\hat v \omega) + Pf =G(u \omega) + Pf = \hat u
\]
 everywhere in $\Omega$. 
 
 If $\omega$ satisfies \eqref{abs-cap}, then it is enough 
to require that   $u<+\infty$ and \eqref{u-def} hold q.e. Then $u$ is a solution of \eqref{u-omega} 
with respect to $\omega$, and 
$\hat u:=G(u \omega) + Pf $ is 
a potential theoretic solution to \eqref{ufeqn}, and $\hat u$ is a quasicontinuous representative of $u$, so that 
$\hat u = u$ q.e. 
 
From now on, we will not distinguish between  a solution $u$ to \eqref{u-omega} understood $d \omega$-a.e., and its superharmonic representative $\hat u= u$ $d \omega$-a.e. which satisfies \eqref{u-def} everywhere in $\Omega$. 

In particular, the solution $u_f$ of \eqref{u-def} defined by \eqref{ufsolndef}  
\textit{everywhere} in $\Omega$ is a potential theoretic (superharmonic) solution of 
\eqref{ufeqn} provided $u_f\not\equiv +\infty$. 
Indeed, for $m \in \N$,  
\[
\sum_{j=0}^m T^j(Pf)) (x) = Pf(x) + T \sum_{j=0}^{m-1} T^j(Pf)) (x), \quad \text{for all} \, \, x \in \Omega . 
\]
Letting $m \to \infty$, by the monotone convergence theorem we have  
\begin{align*}
u_f & := \sum_{j=0}^\infty T^j(Pf)\\ & = Pf + T(\sum_{j=0}^\infty T^j(Pf))\\& = Pf + G(u_f \omega)
\end{align*}
 everywhere in $\Omega$. 
 Clearly, $u_f$ is a superharmonic function provided $u_f \not\equiv+\infty$ 
 in $\Omega$, which occurs if and only if $G (u_f \,  \omega)\not\equiv +\infty$ in $\Omega$, or equivalently $u_f \in L^1(\Omega, m \omega)$.  
 Moreover, $u_f$ is a \textit{minimal} solution since, for every other 
 solution $u$, we obviously have, for every  $m \in \N$, 
 \[
 u=G(u \omega) + Pf = G(u \omega) +\sum_{j=0}^m T^j(Pf) \ge \sum_{j=0}^m T^j(Pf).
 \]
Letting $m \rightarrow \infty$, we see that $u \ge u_f$.  

%In the case $f \equiv 1$ on $\partial \Omega$, by $u_1$ we denote a solution defined by 
%\eqref{gauge-def} everywhere in $\Omega$, provided  
%$u_1\not\equiv \infty$. In other words, $u_1$ is a (minimal) superharmonic solution satisfying the %equation 
%\[  u=G(u \omega) + 1 \qquad \text{in} \, \, \Omega . \]
 %}

%%%%%%%%%%%%%%
%%%%%%%%%%%%%%

\begin{Def} \label{qmkernel} Let $(\Omega, \omega)$ be a measure space.  A quasi-metric kernel $K$ is a measurable function $K: \Omega \times \Omega \rightarrow (0, +\infty]$ such that $K$ is symmetric ($K(x,y)=K(y,x)$) and $d= \frac{1}{K}$ satisfies 
\[ d(x,y) \leq \kappa (d(x,z) + d(z,y)) \quad \mbox{for all} \quad x,y,z \in \Omega, \]
for some $\kappa >0$, called the \textbf{quasi-metric constant} for $K$.

A measurable function $K: \Omega \times \Omega \rightarrow (0, +\infty]$ is called \textbf{quasi-metrically modifiable} if there exists a measurable function $m: \Omega \rightarrow (0, \infty)$ such that $\tilde{K} (x,y) = \frac{K(x,y)}{m(x)m(y)}$ is a quasi-metric kernel.  The function $m$ is called a \textbf{modifier} for $K$.
\end{Def}

We will use the following result, from \cite{FNV}, Corollary 3.5.   

\begin{Thm} \label{FNVTheorem} Let $(\Omega, \omega)$ be a measure space.  Suppose $K$ is a quasi-metrically modifiable kernel on $\Omega$ with modifier $m$.  Let $\kappa$ be the quasi-metric constant for $\frac{K(x,y)}{m(x)m(y)}$. For a non-negative, measurable function $h$ on $\Omega$, define
\[ Th (x) = \int_{\Omega} K(x,y) h(y) \, d \omega (y), \quad \mbox{for} \,\, x \in \Omega. \]
For $j \in \N$, let $T^j$ be the $j^{th}$ iterate of $T$, and let $T^0 h =h$.  

\vspace{0.1in}

(A) If $\Vert T \Vert <1$, then there exists a positive constant $C$, depending only on $\kappa$ and $\Vert T \Vert$, such that 
\begin{equation} \label{qmkernelupperbnd}
\sum_{j=0}^{\infty} T^j m (x) \leq m(x) e^{C (Tm(x))/m(x)}, \qquad \text{for all} \, \, x \in \Omega .
\end{equation}

\vspace{0.1in}

(B) There exists a positive constant $c$, depending only on $\kappa$, such that 
\begin{equation} \label{qmkernellowerbnd}
\sum_{j=0}^{\infty} T^j m (x)\geq m(x) e^{c (Tm(x))/m(x)}, \qquad \text{for all} \, \, x \in \Omega .
\end{equation}

\end{Thm}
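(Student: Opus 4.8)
The plan is to reduce to a genuine quasi-metric kernel, prove \eqref{qmkernellowerbnd} by an elementary ``sorted chain'' estimate for the iterated kernels, and prove \eqref{qmkernelupperbnd} by showing, via a comparison principle, that $e^{C\,T1}$ is a supersolution of $w=1+Tw$ when $\Vert T\Vert<1$. For the reduction: by Definition \ref{qmkernel}, $\tilde K(x,y):=K(x,y)/(m(x)m(y))$ is a quasi-metric kernel with constant $\kappa$; with $d\tilde\omega:=m^2\,d\omega$ and $\tilde Tg(x):=\int_\Omega \tilde K(x,y)\,g(y)\,d\tilde\omega(y)$ one has $\tilde Tg=m^{-1}T(mg)$, hence $\tilde T^j1=m^{-1}T^jm$ for all $j$ and $\tilde T1=m^{-1}Tm$, while $h\mapsto h/m$ is a unitary equivalence $L^2(\omega)\to L^2(\tilde\omega)$ intertwining $T$ and $\tilde T$, so $\Vert\tilde T\Vert=\Vert T\Vert$. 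Thus both bounds reduce to the case $m\equiv 1$, $K$ a quasi-metric kernel with constant $\kappa$, where they read: always $\sum_{j\ge0}T^j1\ge e^{T1/(2\kappa)}$, and, if $\Vert T\Vert<1$, $\sum_{j\ge0}T^j1\le e^{C\,T1}$ with $C=C(\kappa,\Vert T\Vert)$.

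For the lower bound, write $T^n1(x)=\int_{\Omega^n}\prod_{i=0}^{n-1}K(y_i,y_{i+1})\,d\omega(y_1)\cdots d\omega(y_n)$ with $y_0:=x$, $d:=1/K$, and restrict to $R=\{d(x,y_1)\le\cdots\le d(x,y_n)\}$. On $R$, $d(y_i,y_{i+1})\le\kappa(d(x,y_i)+d(x,y_{i+1}))\le 2\kappa\,d(x,y_{i+1})$ for $i\ge1$, so $K(y_i,y_{i+1})\ge(2\kappa)^{-1}K(x,y_{i+1})$, while $K(y_0,y_1)=K(x,y_1)$; multiplying gives $\prod_{i=0}^{n-1}K(y_i,y_{i+1})\ge(2\kappa)^{-n}\prod_{i=1}^nK(x,y_i)$ on $R$. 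Since $\prod_{i=1}^nK(x,y_i)$ is symmetric in the $y_i$ and the $n!$ coordinate permutations of $R$ cover $\Omega^n$, its integral over $R$ is $\ge\frac1{n!}(T1(x))^n$; hence $T^n1(x)\ge\frac1{(2\kappa)^n\,n!}(T1(x))^n$, and summing over $n\ge0$ yields $\sum_{j\ge0}T^j1(x)\ge e^{T1(x)/(2\kappa)}$.

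For the upper bound, the comparison principle is routine: $w\ge0$ with $w\ge1+Tw$ pointwise forces $w\ge\sum_{j=0}^NT^j1+T^{N+1}w\ge\sum_{j=0}^NT^j1$ for all $N$, hence $w\ge\sum_{j\ge0}T^j1$. So it suffices to choose $C=C(\kappa,\Vert T\Vert)$ large enough that $T(e^{C\,T1})(x)\le e^{C\,T1(x)}-1$ for every $x$. To check this at a fixed $x$, split $T1(y)$, for an arbitrary $y\in\Omega$, as a ``main'' part dominated by a $\kappa$-multiple of $T1(x)$ (comparing $K(y,z)$ with $K(x,z)$ on $\{d(y,z)\gtrsim d(x,y)\}$ via the quasi-triangle inequality), plus a ``local'' part $L_x(y)=\int_{\{d(y,z)\lesssim d(x,y)\}}K(y,z)\,d\omega(z)$; then $T(e^{C\,T1})(x)\le e^{c_1(\kappa)C\,T1(x)}\int_\Omega K(x,y)\,e^{C\,L_x(y)}\,d\omega(y)$, and one estimates the last integral by breaking $\Omega$ into the dyadic annuli $\{2^k\le K(x,y)<2^{k+1}\}$ (each of $\omega$-mass $\le 2^{-k}T1(x)$) and iterating the same localization on $L_x$, so that the scale contributions form a geometric series of ratio a fixed power of $\Vert T\Vert<1$ after finitely many scales are absorbed into $C$.

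The reduction and the lower bound are soft; the substance is the supersolution estimate. I expect the main obstacle to be twofold. First, one application of the quasi-triangle inequality only gives $T1(y)\le\kappa\,T1(x)+L_x(y)$ with a genuine factor $\kappa\ge1$ in front of $T1(x)$, which is too lossy to dominate $e^{C\,T1(x)}$ outright, so that factor must be recovered by iterating the localization across scales. Second --- and this is exactly where $\Vert T\Vert<1$ (rather than $\Vert T\Vert\le1$) is consumed --- the exponential of the uncontrolled local term $L_x(y)$ must be tamed by a Schur/good-$\lambda$ estimate at each scale; this forces $C$ to depend on $\Vert T\Vert$ and to blow up as $\Vert T\Vert\to1^-$, in contrast with the unconditional bound \eqref{qmkernellowerbnd}.
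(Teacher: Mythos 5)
First, note that the paper does not prove this theorem at all: it is quoted verbatim from \cite{FNV}, Corollary 3.5, so your attempt is being measured against that external proof rather than an argument in this text. Your reduction to the case $m\equiv 1$ (conjugating by $h\mapsto h/m$ and passing to $d\tilde\omega=m^2\,d\omega$) is correct, and your proof of part (B) is complete and correct: the ordering of the chain by $d(x,y_1)\le\cdots\le d(x,y_n)$, the estimate $K(y_i,y_{i+1})\ge(2\kappa)^{-1}K(x,y_{i+1})$ on that region, and the symmetrization giving $T^n1(x)\ge \frac{1}{(2\kappa)^n n!}(T1(x))^n$ is essentially the standard argument for the unconditional lower bound, with $c=1/(2\kappa)$.

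Part (A), however, has a genuine gap, and it is exactly the part that constitutes the substance of the cited result. Your plan is to verify that $w=e^{C\,T1}$ is a supersolution of $w\ge 1+Tw$, but the computation you outline cannot deliver this: splitting $T1(y)\le 2\kappa\,T1(x)+L_x(y)$ gives $T(e^{CT1})(x)\le e^{2\kappa C\,T1(x)}\int_\Omega K(x,y)e^{CL_x(y)}\,d\omega(y)$, and since $2\kappa>1$ and the remaining integral is at least $T1(x)$ (it is not small), the right-hand side exceeds $e^{C\,T1(x)}$ whenever $T1(x)$ is large; so the supersolution inequality cannot be recovered from this estimate as it stands. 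The step you describe as ``recovering the factor by iterating the localization across scales'' with ``a geometric series of ratio a fixed power of $\Vert T\Vert$'' is precisely the missing argument, not a proof of it: controlling $\int_{\{2^k\le K(x,\cdot)<2^{k+1}\}}e^{C L_x(y)}\,d\omega(y)$ requires exponential integrability of a localized potential $T_{B}1$ over a quasi-metric ball $B$, which is a statement of the same nature as \eqref{qmkernelupperbnd} itself (a Neumann-series/exponential bound at a smaller scale), so without an explicit induction-on-scales scheme with quantitative bookkeeping the argument is circular. This is where $\Vert T\Vert<1$ must genuinely enter, and in \cite{FNV} it does so through a rather delicate inductive estimate on the iterated kernels $K_j$ that produces the factorial gain $1/j!$ lost by naive applications of the quasi-triangle inequality (each of which costs a factor $2\kappa$ per iteration); your sketch correctly identifies this as the main obstacle but does not overcome it, and it is not even clear a priori that $e^{C\,T1}$ satisfies the pointwise supersolution inequality for any $C$, since the theorem itself only yields $T(e^{CT1})\le T(u^{2\kappa C})$ for $u=\sum_j T^j1$, which is not obviously comparable to $e^{CT1}$. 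As written, part (A) is a plausible program, not a proof.
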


It is known (\cite{An2}, \cite{Han1}) that in a bounded uniform domain $\Omega$ (in particular, an NTA domain), the Green's kernel $G(x,y)$ is quasi-metrically modifiable, with modifier $m(x) = \min (1, G(x,x_0))$, where $x_0$ is any fixed point in $\Omega$, and the quasi-metric constant of 
the modified kernel $G(x,y)/(m (x) \, m(y))$ is independent of $x_0$.

In fact, in a bounded uniform domain $\Omega\subset \R^n$ ($n\ge 3$), the following slightly stronger property (called the strong generalized triangle property) holds (\cite{Han1}, p. 465): 
\begin{equation}\label{strong-quasi} 
 |x_1-x_2| \le |x_1-y| \Longrightarrow \frac{G(x_1, y)}{m(x_1)} \le \kappa \, \frac{G(x_2, y)}{m(x_2)}, 
\end{equation}
for all $x_1, x_2, y \in \Omega$, where $\kappa$ depends only on $\Omega$.  It is known (\cite{Han1}, Corollary 2.8) 
that \eqref{strong-quasi}  is equivalent to the uniform boundary Harnack  principle established for uniform domains in (\cite{Aik}, Theorem 1). By (\ref{strong-quasi}),
\begin{equation}\label{liminf-limsup} 
\limsup_{x_1\rightarrow z, \, x_1 \in \Omega} \frac{G(x_1, y)}{m(x_1)} \le \kappa \, \liminf_{x_2\rightarrow z, \, x_2 \in \Omega}  \frac{G(x_2, y)}{m(x_2)} ,
\end{equation}
for all $y\in\Omega$ and $z \in \partial \Omega$,  where $\kappa$ depends only on $\Omega$, because the condition $|x_1-x_2| \le |x_1-y|$ is satisfied for $x_1$ and $x_2$ sufficiently close to $z$.

We will need the following lemma  for punctured quasi-metric spaces due to Hansen and Netuka (\cite{HN}, Proposition 8.1 and Corollary 8.2); it originated in (Pinchover \cite{Pin}, Lemma A.1) 
for normed spaces. 

\begin{Lemma}\label{hansen} Suppose $d$  is a quasi-metric on a set $\Omega$ with quasi-metric constant $\kappa$. Suppose $x_1 \in \Omega$. Then 
\begin{equation}\label{quasi-cond} 
\tilde d(x,y) = \frac{d(x,y)}{d(x,x_1) \cdot d(y,x_1)}, \qquad x, y 
\in \Omega \setminus\{x_1\},
\end{equation} 
is a quasi-metric on $\Omega\setminus\{x_1\}$ with quasi-metric constant 
$4 \kappa^2$. 
\end{Lemma}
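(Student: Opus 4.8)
The plan is to verify directly that $\tilde d$ is symmetric, nonnegative, and satisfies the quasi-triangle inequality with constant $4\kappa^2$ on $\Omega\setminus\{x_1\}$; in fact the computation below will yield the slightly sharper constant $2\kappa^2$. Symmetry and nonnegativity are inherited from $d$, and for $x,y\in\Omega\setminus\{x_1\}$ the denominators $d(x,x_1)$ and $d(y,x_1)$ are finite and positive, so $\tilde d$ is well defined. The real content is the quasi-triangle inequality, and the first step is to clear denominators: fixing $x,y,z\in\Omega\setminus\{x_1\}$ and multiplying the desired inequality $\tilde d(x,y)\le 4\kappa^2(\tilde d(x,z)+\tilde d(z,y))$ by the positive number $d(x,x_1)\,d(y,x_1)\,d(z,x_1)$, we see it is equivalent to
\[
d(x,y)\,d(z,x_1)\le 4\kappa^2(d(x,z)\,d(y,x_1)+d(z,y)\,d(x,x_1)).
\]
Abbreviating $a=d(x,z)$, $b=d(z,y)$, $p=d(x,x_1)$, $q=d(y,x_1)$, $r=d(z,x_1)$, $c=d(x,y)$, the target becomes $cr\le 4\kappa^2(aq+bp)$.

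Next I would extract two complementary estimates for $cr$ from the quasi-triangle inequality of $d$. From $c\le\kappa(a+b)$, together with $r\le\kappa(b+q)$ applied to $\kappa a r$ and $r\le\kappa(a+p)$ applied to $\kappa b r$, one gets
\[
cr\le \kappa a r+\kappa b r\le \kappa^2 aq+\kappa^2 bp+2\kappa^2 ab.
\]
Alternatively, routing $d(x,y)$ through $x_1$ via $c\le\kappa(p+q)$, together with $r\le\kappa(b+q)$ applied to $\kappa p r$ and $r\le\kappa(a+p)$ applied to $\kappa q r$, one gets
\[
cr\le \kappa p r+\kappa q r\le \kappa^2 aq+\kappa^2 bp+2\kappa^2 pq.
\]
Combining the two displays gives $cr\le \kappa^2(aq+bp)+2\kappa^2\min(ab,pq)$.

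Finally, I would absorb the remaining cross term by the arithmetic--geometric mean inequality: since $aq+bp\ge 2\sqrt{aq\cdot bp}=2\sqrt{(ab)(pq)}\ge 2\min(ab,pq)$, we have $\min(ab,pq)\le\tfrac{1}{2}(aq+bp)$, and therefore $cr\le 2\kappa^2(aq+bp)\le 4\kappa^2(aq+bp)$, which is the desired inequality. I do not expect a genuine obstacle, since everything reduces to elementary manipulations; the one point requiring a small idea is noticing that the cross term $ab$ produced by expanding $d(x,y)$ ``directly'' through $z$ can be traded for the cross term $pq$ by instead expanding $d(x,y)$ through $x_1$, after which $\min(ab,pq)$ is dominated by $aq+bp$. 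I would also remark that the argument in fact produces the improved constant $2\kappa^2$.
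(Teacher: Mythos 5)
Your proof is correct. Note first that the paper does not actually prove this lemma: it is quoted from Hansen--Netuka (\cite{HN}, Proposition 8.1 and Corollary 8.2), with the idea traced back to Pinchover (\cite{Pin}, Lemma A.1), so there is no in-paper argument to compare against. Your direct verification is a legitimate self-contained substitute: after clearing denominators the claim is exactly $cr\le C(aq+bp)$ in your notation, and your two estimates are both valid applications of the quasi-triangle inequality for $d$ (routing $d(x,y)$ through $z$, respectively through $x_1$, and then expanding $r=d(z,x_1)$ through $y$ and through $x$). Taking the minimum of the two bounds and absorbing the cross term via $\min(ab,pq)\le\sqrt{(ab)(pq)}\le\tfrac12(aq+bp)$ is a nice touch, and it does yield the sharper constant $2\kappa^2$, which of course implies the stated $4\kappa^2$. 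The only implicit hypothesis worth acknowledging is that $d(x,x_1)>0$ for $x\neq x_1$ (the paper's Definition \ref{qmkernel} permits $d=1/K$ to vanish where $K=+\infty$), so that $\tilde d$ is well defined and the multiplication by $d(x,x_1)\,d(y,x_1)\,d(z,x_1)$ is legitimate; this is harmless in the intended application, where $d(x,x_1)=m(x)m(x_1)/G(x,x_1)>0$ for $x\neq x_1$, but a one-line remark to that effect would make the argument airtight.
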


\begin{Lemma} \label{Martinkernelquasimetric} Let $\Omega$ be a bounded uniform  domain with  Green's function $G(x,y)$.  Fix some $x_0 \in \Omega$ and define Martin's kernel $M(x,z)$ for $x \in \Omega$ and $z \in \partial \Omega$ by (\ref{martin-K}).  Then for each $z \in \partial \Omega$, the function $\tilde{m}(x) = M(x,z)$ is a quasi-metric modifier for $G$, with quasi-metric constant $\kappa$ independent of $z \in \partial \Omega$.
\end{Lemma}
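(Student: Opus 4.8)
The plan is to exhibit the modified kernel $G(x,y)/\big(M(x,z)M(y,z)\big)$ as a pointwise limit of quasi-metric kernels obtained from $G$ by ``puncturing'' it at interior points $y_k\in\Omega$ that approach $z$, and then to let the quasi-metric constant pass to the limit. The tool for the puncturing is Lemma \ref{hansen} (Hansen--Netuka), which is exactly why it was recorded above. Concretely, recall first that, since $G$ is quasi-metrically modifiable with modifier $m(x)=\min(1,G(x,x_0))$, the function $d_0(x,y)=m(x)m(y)/G(x,y)$ is a quasi-metric on $\Omega$ with a constant $\kappa_0$ that depends only on $\Omega$ (in particular not on $x_0$).

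I would then fix $z\in\partial\Omega$, pick a sequence $(y_k)\subset\Omega\setminus\{x_0\}$ with $y_k\to z$, and apply Lemma \ref{hansen} with $x_1=y_k$: the function
\[
\tilde d_k(x,y)=\frac{d_0(x,y)}{d_0(x,y_k)\,d_0(y,y_k)}=\frac{G(x,y_k)\,G(y,y_k)}{G(x,y)\,m(y_k)^2},\qquad x,y\in\Omega\setminus\{y_k\},
\]
is a quasi-metric on $\Omega\setminus\{y_k\}$ with constant $4\kappa_0^2$. Multiplying by the positive constant $m(y_k)^2/G(x_0,y_k)^2$ (which leaves the quasi-metric property and its constant untouched) gives that $d_k(x,y):=G(x,y_k)G(y,y_k)/\big(G(x,y)G(x_0,y_k)^2\big)$ is again a quasi-metric on $\Omega\setminus\{y_k\}$ with constant $4\kappa_0^2$. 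Now for fixed $x,y\in\Omega$ one has $y_k\notin\{x,y\}$ for large $k$, and by the definition \eqref{martin-K} of Martin's kernel $d_k(x,y)\to M(x,z)M(y,z)/G(x,y)=:d(x,y)$ as $k\to\infty$ (both sides being $0$ when $x=y$, since $G(x,x)=+\infty$ and $M(\cdot,z)<\infty$ on $\Omega$). For any $x,y,w\in\Omega$ the quasi-triangle inequality $d_k(x,y)\le 4\kappa_0^2\big(d_k(x,w)+d_k(w,y)\big)$ holds once $y_k\notin\{x,y,w\}$, so letting $k\to\infty$ yields $d(x,y)\le 4\kappa_0^2\big(d(x,w)+d(w,y)\big)$. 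Thus $d$ is a quasi-metric on $\Omega$ with constant $4\kappa_0^2$; equivalently $\tilde K(x,y)=G(x,y)/\big(M(x,z)M(y,z)\big)=1/d(x,y)$, which takes values in $(0,+\infty]$ because $G>0$ and $0<M(\cdot,z)<\infty$ on $\Omega$, is a quasi-metric kernel in the sense of Definition \ref{qmkernel}, so $\tilde m(x)=M(x,z)$ is a modifier for $G$ with constant $4\kappa_0^2$ independent of $z$.

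The computations above are routine; the step that needs care is the bookkeeping on the constant — namely that the constant $4\kappa_0^2$ produced by Lemma \ref{hansen} does not depend on the puncture point $y_k$, nor on the sequence, nor on $z$, so that it indeed serves as a uniform quasi-metric constant — together with the justification of the limit passage, which relies only on the finiteness $0<M(\cdot,z)<\infty$ on $\Omega$ (valid since $M(\cdot,z)$ is a minimal harmonic function) and on the fact that the defining limit in \eqref{martin-K} genuinely exists in a bounded uniform domain. (Alternatively, the limit step could be organized using \eqref{liminf-limsup} in place of the pointwise convergence of $d_k$, but since \eqref{martin-K} already provides an honest limit this is not needed.)
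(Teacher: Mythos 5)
Your proposal is correct and follows essentially the same route as the paper: puncture the quasi-metric $m(x)m(y)/G(x,y)$ at an interior point via Lemma \ref{hansen}, renormalize by $G(x_0,\cdot)^2$, and let the puncture point tend to $z$ so that the defining limit \eqref{martin-K} converts the quasi-triangle inequality (with constant $4\kappa^2$, independent of the puncture point and of $z$) into the one for $M(\cdot,z)$. The only cosmetic difference is that the paper passes the limit directly in the inequality for a single variable point $x_1\to z$, while you phrase it along a sequence $y_k\to z$; the substance and the constant bookkeeping are identical.
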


\begin{proof} Fix $x_0\in \Omega$, $z\in \partial \Omega$.  As noted above, $m (x) = \min (1, G(x,x_0))$ is a modifier for $G$, so that 
$d(x, y)=\frac{m (x) \, m(y)}{G(x,y)}$ is a quasi-metric on $\Omega$ with positive constant $\kappa$ independent of $x_0$, so that 
\[  \frac{m (x) \, m(y)}{G(x,y)} \leq \kappa \left( \frac{m (x) \, m(w)}{G(x,w)}  + \frac{m (w) \, m(y)}{G(w,y)} \right) ,\]
for all points $x,y,w \in \Omega$.  Suppose $x_1 \in \Omega$ with $x_1 \neq x_0$. Clearly, for $\tilde d$ defined by \eqref{quasi-cond}, we have
\[
\tilde d(x,y) = \frac{1}{m(x_1)^2} \, \frac{G(x, x_1) \, G(y, x_1)}{G(x, y)}, \qquad x, y\in \Omega\setminus\{x_1\} .
\]  
Then by Lemma 
\ref{hansen} it follows that $\tilde d$  is a quasi-metric on $\Omega\setminus\{x_1\}$ with quasi-metric constant 
$4 \kappa^2$. Assuming that $x, y, w \in  \Omega\setminus\{x_1\}$, from  the inequality 
$\tilde d(x,y) \le 4 \kappa^2 [\tilde d(x,w) + \tilde d(y,w)]$, 
we deduce  
\begin{align*}
\frac{1}{m(x_1)^2} \, \frac{G(x, x_1) \, G(y, x_1)}{G(x, y)} &\le \frac{4 \kappa^2}{m(x_1)^2} \, \\
\times & \left[ \frac{G(x, x_1) \, G(w, x_1)}{G(x, w)} + \frac{G(y, x_1) \, G(w, x_1)}{G(y, w)} 
\right]. 
\end{align*}
Multiplying both sides of the preceding inequality by $\frac{m(x_1)^2}{[G(x_0, x_1)]^2}$ yields  
\begin{align*}
 & \frac{G(x, x_1) \, G(y, x_1)}{G(x_0, x_1) \, G(x, y) \, G(x_0, x_1)} \le 4 \kappa^2 \, \\
   & \times \left[ \frac{G(x, x_1) \, G(w, x_1)}{G(x_0, x_1) \, G(x, w) \, G(x_0, x_1)} + \frac{G(y, x_1) \, G(w, x_1)}{G(x_0, x_1) \, G(y, w) \, G(x_0, x_1)} 
\right]. 
\end{align*}

Letting $x_1 \rightarrow z$, with $x_1 \in \Omega$, we have 
\[ 
\lim_{x_1 \rightarrow z, \, x_1 \in \Omega} \frac{G(x, x_1)}{G(x_0, x_1)}= M(x,z)=\tilde{m} (x), \]
by (\ref{martin-K}), and similarly with $x$ replaced by $y$ or $w$.  We obtain 
\[  \frac{\tilde{m} (x)\tilde{m}(y)}{G(x,y)} \leq 4 \kappa^2 \left( \frac{\tilde{m} (x)\tilde{m}(w)}{G(x,w)}  + \frac{\tilde{m}(w)\tilde{m}(y)}{G(w,y)} \right) . \]
\end{proof}

\bigskip

\begin{proofof} Theorem \ref{mainufest}.  By Lemma \ref{Martinkernelquasimetric}, $\tilde{m}(x)=M(x,z)$ is a quasi-metric modifier for $T$, for all $z \in \partial \Omega$, with quasi-metric constant independent of $z$.  Hence by part (A) of Theorem \ref{FNVTheorem} with $\tilde m$ in place of $m$, under the assumption that $\Vert T \Vert <1$, 
(note that the estimates in Theorem \ref{FNVTheorem} hold everywhere)
\begin{equation}\label{upperTM}
\begin{aligned}
 \mathcal{M}(x, z) & = \sum_{j=0}^{\infty}  T^j M(\cdot, z) (x)   \leq M(x,z) e^{C \, (TM(\cdot,z))(x)/M(x, z)}\\ & = M(x,z) e^{C \int_{\Omega} G(x,y) \frac{M(y,z)}{M(x,z)} d \omega(y)}  , \qquad (x, z) \in \Omega\times\partial \Omega , 
 \end{aligned}
 \end{equation} 
with $C$ depending only on $\Omega$ and $||T||$.  Substituting this estimate in \eqref{ufrepresent} and using equation (\ref{harmmeasiden}) gives (\ref{ptwiseupperbnd}). This proves part (A) of Theorem \ref{mainufest}. 

Suppose now that $u$ is a solution to \eqref{ufeqn}. Assuming without loss of generality that $f \not=0$ $d H^{x}$-a.e., so that $u \geq Pf>0$ 
is a positive solution,  we see that $T u \leq u$, where $0<u<\infty$ $d \omega$-a.e. Hence, $\Vert T \Vert \leq 1$, 
and  consequently \eqref{equivnormTless1} holds with $\beta=1$,  
by Schur's lemma (see \cite{FNV}, \cite{FV2}). In particular, \eqref{abs-cap} holds.

Since $Pf$ is a positive harmonic function, obviously $Pf\ge c_K>0$ on every compact set $K\subset \Omega$, 
 and consequently 
\begin{equation}\label{F-M-dom}
 c_K \, G(\chi_K \omega)\le G(Pf \omega) \le G (u \omega) \le u < \infty \quad d \omega\text{-a.e.} 
 \end{equation}
 This simple observation will be used below. 

For the minimal solution $u_f$  to \eqref{ufeqn}   
given by \eqref{ufsolndef} we have  $u\ge u_f$. 
Applying part (B) of Theorem \ref{FNVTheorem} with $\tilde m=M(\cdot, z)$ in place of $m$ gives 
\begin{equation}\label{lowerTM}
\begin{aligned}
\mathcal{M}(x, z) & =  \sum_{j=0}^{\infty}  T^j M(\cdot, z) (x)    \geq M(x,z) e^{c \, (TM(\cdot,z))(x)/M(x, z)}\\ & = M(x,z) e^{c \int_{\Omega} G(x,y) \frac{M(y,z)}{M(x,z)} d \omega(y)}  , 
\qquad (x, z) \in \Omega\times\partial \Omega ,
 \end{aligned}
 \end{equation} 
with $c$ depending only on $\Omega$. 

In fact, we can let $c=1$ in \eqref{lowerTM} if instead of statement (B) of Theorem \ref{FNVTheorem}  we use 
a recent lower estimate of solutions obtained in \cite{GV2}, Theorem 1.2, with $q=1$, 
$\mathfrak{b}=1$, and $h=\tilde m$. 
Here $\mathfrak{b}$ is the constant in the so-called weak domination principle, which states that, 
for any bounded measurable function $g$ with compact support,  
\begin{equation}\label{weak-dom} 
G (g \omega)(x)\leq h(x)\ \text{in }\mathrm{supp} (g)\ \ \Longrightarrow \ \
G(g \omega)(x) \leq \mathfrak{b}\ h(x) \, \, \text{in\ }\Omega ,
\end{equation}
where $h$ is a given positive lower semicontinuous function on $\Omega$. 

For Green's kernel $G$, this property with $\mathfrak{b}=1$ is a consequence of the classical Maria--Frostman 
 domination principle (see \cite{Hel}, Theorem 5.4.8), for any  positive superharmonic function $h$.  We only need to verify that 
 $G (g \omega)<\infty$ $d \omega$-a.e., which is immediate from  \eqref{F-M-dom}. Hence, 
 \eqref{weak-dom} holds with $\mathfrak{b}=1$, and so 
 \eqref{lowerTM} holds with $c=1$ by  \cite{GV2}, Theorem 1.2.

Consequently,  by the same argument as above, 
 \begin{align*}
 u_f (x)& = \int_{\partial \Omega} f(z) \, 
   \sum_{j=0}^{\infty} T^j M(\cdot, z) (x)  \, d H^{x_0}(z)\\ & \geq 
  \int_{\partial \Omega} e^{\int_{\Omega} G(x,y) \frac{M(y,z)}{M(x,z)} d \omega(y)}\,  f(z) \, M(x,z)   \, d H^{x_0}(z), \quad \text{for all} \, \, x \in \Omega,
 \end{align*}
where $M(x,z) \, H^{x_0}(z)= d H^{x}(z)$. This yields the lower bound (\ref{ptwiselowerbnd}), 
  The proof of part (B) of Theorem \ref{mainufest} is complete. 
\end{proofof}

We complete this section with an extension of Theorem \ref{mainufest} which covers 
solutions of \eqref{form-sol} with an arbitrary positive harmonic function $h$ in place of $Pf$.  Such solutions arise naturally, because, if $u$ positive superharmonic function in $\Omega$ such that 
\begin{equation}\label{u-harm} 
-\triangle  u = \omega u, \quad u\ge 0, \, \,  \mbox{in} \, \,  \Omega , 
\end{equation} 
and if the greatest harmonic minorant of $u$ is $h>0$, then by the Riesz decomposition theorem, 
\begin{equation}\label{u-harm-int}
u= G(u \omega)+ h \quad u\ge 0, \, \,  \mbox{in} \, \,  \Omega ,
\end{equation} 
where $G(u \omega)\not\equiv +\infty$, and $u d \omega$ is the corresponding Riesz measure, a locally finite Borel measure in $\Omega$.  

Given a positive harmonic function $h$ on $\Omega$, we will estimate the minimal solution 
\[ u_h = h + \sum_{j=1}^\infty T^j h \]
of \eqref{u-harm-int} and in particular give conditions for $u_h$ to exist, i.e., such that $u_h \not\equiv +\infty$.  The proof is based on Martin's representation \eqref{martin-rep}, which takes the place of \eqref{harm-rep} in the proof of Theorem \ref{mainufest}.

\begin{Thm} \label{mainu-harm}  Let $\Omega\subset \R^n$ be a bounded uniform domain, 
 $\omega$ a locally finite Borel measure on $\Omega$, and $h$ a positive harmonic 
  function in $\Omega$.  

\vspace{0.1in}

(A)  If $\Vert T \Vert <1$, then there exists a positive constant $C$ depending only on $\Omega$ and $\Vert T \Vert$ such that 
\begin{equation} \label{upperbnd-harm}
u_h (x) \leq  \int_{\partial \Omega} e^{C \int_{\Omega} G(x,y) \frac{M(y,z)}{M(x,z)} d \omega (y)} M(x, z) \, d \mu_h(z), \quad x \in \Omega .
\end{equation}

\vspace{0.1in}

(B) If $u$ is a positive  solution of \eqref{u-harm-int}, then $\Vert T \Vert \leq 1$, and 
\begin{equation} \label{lowerbnd-harm}
  u (x) \geq  \int_{\partial \Omega} e^{\int_{\Omega} G(x,y) \frac{M(y,z)}{M(x,z)} d \omega (y)} M(x, z) \, 
  d \mu_h(z), \quad x \in \Omega . 
\end{equation}
\end{Thm}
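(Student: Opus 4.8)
The plan is to mimic the proof of Theorem \ref{mainufest} almost verbatim, with Martin's representation \eqref{martin-rep} replacing the harmonic extension formulas \eqref{harm-rep}--\eqref{pf-rep-martin}. By Lemma \ref{Martinkernelquasimetric}, for each fixed $z \in \partial\Omega$ the function $\tilde m(x) = M(x,z)$ is a quasi-metric modifier for $G$, hence for $T$, with quasi-metric constant independent of $z$, so Theorem \ref{FNVTheorem} applies to $T$ with $\tilde m$ in place of $m$.

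First I would rewrite $u_h$ in terms of $\mathcal{M}$. Writing $h(x) = \int_{\partial\Omega} M(x,z)\, d\mu_h(z)$ by \eqref{martin-rep}, where $\mu_h$ is a finite Borel measure on $\partial\Omega$, and applying $T^j$ under the integral sign (legitimate by Tonelli, everything being nonnegative), one gets $u_h(x) = \sum_{j=0}^\infty T^j h(x) = \int_{\partial\Omega} \mathcal{M}(x,z)\, d\mu_h(z)$. Exactly as for $u_f$, the identity $\sum_{j=0}^m T^j h = h + T \sum_{j=0}^{m-1} T^j h$ together with monotone convergence shows that $u_h = h + G(u_h \omega)$ whenever $u_h \not\equiv +\infty$, and that $u \ge u_h$ for every solution $u$ of \eqref{u-harm-int}.

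For part (A), under $\Vert T \Vert < 1$ I apply Theorem \ref{FNVTheorem}(A) with $\tilde m = M(\cdot, z)$; this gives precisely the upper estimate \eqref{upperTM} for $\mathcal{M}(x,z)$, with $C$ depending only on $\Omega$ and $\Vert T \Vert$, and integrating against $d\mu_h$ yields \eqref{upperbnd-harm} (which holds trivially if $u_h \equiv +\infty$ somewhere). For part (B), if $u$ is a positive solution of \eqref{u-harm-int} then $Tu = G(u\omega) \le u$ with $0 < u < \infty$ $d\omega$-a.e., so $\Vert T\Vert \le 1$ by Schur's lemma and \eqref{abs-cap} holds. Since $h$ is positive harmonic, $h \ge c_K > 0$ on each compact $K \subset \Omega$, whence $c_K\, G(\chi_K \omega) \le G(h\omega) \le G(u\omega) \le u < \infty$ $d\omega$-a.e. --- the exact analogue of \eqref{F-M-dom}; thus $G(g\omega) < \infty$ $d\omega$-a.e. for bounded compactly supported $g$, and the Maria--Frostman domination principle gives the weak domination property \eqref{weak-dom} with $\mathfrak b = 1$. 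Then \cite{GV2}, Theorem 1.2, applied with $q=1$, $\mathfrak b = 1$, $h = \tilde m = M(\cdot,z)$, yields \eqref{lowerTM} with $c = 1$, and combining $u \ge u_h = \int_{\partial\Omega} \mathcal{M}(x,z)\, d\mu_h(z)$ with that estimate gives \eqref{lowerbnd-harm}.

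The only point genuinely new relative to Theorem \ref{mainufest} is the passage from $f\, dH^{x_0}$ to the Martin measure $\mu_h$, and this is essentially cosmetic once one has \eqref{martin-rep} and the Tonelli interchange. The main (mild) obstacle is the same as there: justifying that \eqref{lowerTM} may be taken with constant $c = 1$ rather than the dimensional constant of Theorem \ref{FNVTheorem}(B), which rests on the weak domination principle with $\mathfrak b = 1$; that in turn needs the finiteness $G(h\omega) \not\equiv +\infty$, which is precisely what the existence of the superharmonic solution $u$ supplies via the analogue of \eqref{F-M-dom}. If one is content with a dimensional constant $c$ in the lower bound, even this step can be bypassed by invoking Theorem \ref{FNVTheorem}(B) directly.
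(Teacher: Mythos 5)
Your proposal is correct and follows essentially the same route as the paper, which proves Theorem \ref{mainu-harm} by integrating the pointwise bounds \eqref{upperTM} and \eqref{lowerTM} for $\mathcal{M}(x,z)$ (the latter with $c=1$ via the Maria--Frostman domination principle and \cite{GV2}) against $d\mu_h(z)$ in place of $f(z)\,dH^{x_0}(z)$. Your expanded justification of the $c=1$ constant through the analogue of \eqref{F-M-dom} with $h$ in place of $Pf$ matches the paper's argument for Theorem \ref{mainufest}, on which it relies.
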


The proof of Theorem \ref{mainu-harm} is very similar to that of Theorem \ref{mainufest} 
above. We only need to integrate both sides of estimates \eqref{upperTM} and \eqref{lowerTM} 
over $\partial\Omega$ against $d \mu_h(z)$ in place of  $f(z) \, dH^{x_0} (z)$.

\section{Existence criteria for $u_f$}\label{sec3}

We require a few results prior to giving the proof of Theorem \ref{gaugecrit}. The following lemma is well-known (see, for instance,  \cite{AG}, Lemma 4.1.8 and Theorem 5.7.4), but we include a proof for the sake of completeness.  Recall that $x_0 \in \Omega$ is a fixed reference point and $m(x) = \min (1, G(x,x_0))$.

\begin{Lemma} \label{estGchiK} Let $\Omega \subseteq{\R^n}$ ($n\ge 2$) be a domain 
with nontrivial Green's function $G$.  Let $K$ be a compact subset of $\Omega$ and let $\chi_K$ be the characteristic function of $K$. There exists a constant $C_K$ depending on $\Omega$, $K$, and the choice of $x_0$, such that
\begin{equation}\label{GchiKest}
  G \chi_K (x) \leq C_K \,  m (x), \quad x \in \Omega. 
\end{equation}

Also, if $|K|>0$, there exists a constant $c_K>0$ depending on  $\Omega$, $K$ and $x_0$ such that
\begin{equation}\label{lowerGchiKest}
  G \chi_K (x) \geq c_K  \, m (x), \quad x \in \Omega. 
\end{equation}
\end{Lemma}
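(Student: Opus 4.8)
The plan is to derive both bounds by comparing $G\chi_K$ to the modifier $m(x)=\min(1,G(x,x_0))$ using standard potential-theoretic tools: Harnack's inequality and the maximum principle for superharmonic functions. For the upper bound \eqref{GchiKest}, first I would observe that $G\chi_K(x)=\int_K G(x,y)\,dy$ is a Green potential (finite, since $G$ is locally integrable away from the diagonal and $K$ is compact), hence a nonnegative superharmonic function on $\Omega$ that is harmonic on $\Omega\setminus K$ and vanishes ``at the boundary'' in the sense that its greatest harmonic minorant is $0$. The function $m$ is a positive superharmonic function on $\Omega$: indeed $G(\cdot,x_0)$ is superharmonic, and the minimum of a superharmonic function with the constant $1$ is again superharmonic. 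On the compact set $K$, $m$ is bounded below by a positive constant $c=\min_K m>0$ (using that $G(\cdot,x_0)>0$ on $\Omega$ and is lower semicontinuous, attaining a positive minimum on $K$; if $x_0\in K$ one still has $m\ge$ a positive constant on a neighborhood via continuity, or one simply notes $m$ is superharmonic hence its infimum over $K$ is attained on $\partial K$ where it is positive). Meanwhile $G\chi_K$ is bounded above on $K$ by some finite constant $A$ (since $y\mapsto G(x,y)$ is integrable over $K$ uniformly for $x$ in a neighborhood of the compact set $K$; this is a routine local estimate using $G(x,y)\le c_n|x-y|^{2-n}$). Then $G\chi_K\le (A/c)\,m$ on $K$, and since the left side is a potential (greatest harmonic minorant $0$) while the right side is superharmonic and nonnegative, the domination principle (the Maria–Frostman principle, as invoked in the proof of Theorem \ref{mainufest}) — equivalently, the fact that a potential dominated by a superharmonic function on the support of its Riesz measure is dominated everywhere — gives \eqref{GchiKest} with $C_K=A/c$.

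For the lower bound \eqref{lowerGchiKest}, assuming $|K|>0$, I would fix a closed ball $\overline{B}=\overline{B(x_1,r)}\subset \mathrm{int}\,K$ with $|B|>0$, so that $G\chi_K\ge G\chi_B$ pointwise, and reduce to showing $G\chi_B\ge c_K\, m$. Both $G\chi_B$ and $m=\min(1,G(\cdot,x_0))$ are positive superharmonic functions on $\Omega$ whose greatest harmonic minorant is $0$ (for $m$: $G(\cdot,x_0)$ is a potential, and $\min(1,G(\cdot,x_0))$ is dominated by it, hence also a potential). The key comparison is that both are comparable to $G(\cdot,x_0)$ near $\partial\Omega$: away from a fixed compact neighborhood of $x_0\cup\overline B$, Harnack-chain / boundary-Harnack-type estimates in the uniform domain (or more elementarily, the generalized triangle inequality \eqref{strong-quasi} and symmetry of $G$) give $G\chi_B(x)\approx G(x,x_1)\approx G(x,x_0)$, with constants depending on $\Omega, x_0, x_1, r$. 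On the remaining fixed compact set $m$ is bounded above by $1$ and $G\chi_B$ is bounded below by a positive constant (it is positive and lower semicontinuous, and in fact continuous here). Combining, $G\chi_B\ge c\,G(\cdot,x_0)\ge c\,m$ globally, which yields \eqref{lowerGchiKest}.

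I expect the main obstacle to be making the near-boundary comparison $G\chi_B(x)\approx G(x,x_0)$ fully rigorous: one must control $\int_B G(x,y)\,dy$ from below by $G(x,x_0)$ uniformly as $x\to\partial\Omega$. The clean route is to invoke the quasi-metric modifiability of $G$ with modifier $m$ (stated in the excerpt, from \cite{An2}, \cite{Han1}): from $d(x,y)=m(x)m(y)/G(x,y)$ being a quasi-metric one gets, for $y\in B$ and $x$ away from $B\cup\{x_0\}$, that $G(x,y)/m(x)$ is comparable to $G(x,x_0)/m(x)$ up to a constant depending only on $\kappa$ and the geometry of $B$ relative to $x_0$ — integrating over $y\in B$ then gives $G\chi_B(x)\ge c\,|B|\,\inf_{y\in B}\frac{G(x,y)}{m(y)}\,m(y)\ge c'\,G(x,x_0)$. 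Alternatively, and perhaps more transparently, one can cite \cite{AG}, Lemma 4.1.8 and Theorem 5.7.4 directly, as indicated, and present the above only as the idea; since the lemma is explicitly flagged as well-known, a short self-contained argument along these lines suffices, with the domination principle doing the work for the upper bound and the quasi-metric comparison for the lower bound.
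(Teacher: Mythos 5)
Your upper-bound argument is correct and in fact takes a different route from the paper: the paper proves \eqref{GchiKest} by establishing $G\chi_K \le C_1(K)$ and $G\chi_K \le C_2\, G(\cdot,x_0)$ separately, handling $x$ inside and outside a relatively compact subdomain $U \supset K\cup\{x_0\}$ by applying Harnack's inequality to the harmonic function $y\mapsto G(x,y)$ on $U$, whereas you obtain the same conclusion from the Maria--Frostman domination principle, needing only the bound $G\chi_K \le C_K\, m$ on $K$ itself (where the Riesz measure $\chi_K\,dx$ lives). That is a legitimate, arguably slicker, alternative and works in the same generality.

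The lower bound \eqref{lowerGchiKest}, however, has a genuine gap. You begin by fixing a closed ball $\overline{B(x_1,r)}\subset \mathrm{int}\,K$, but the hypothesis is only $|K|>0$, and a compact set of positive Lebesgue measure can have empty interior (a fat Cantor set); so no such ball need exist, and the reduction $G\chi_K\ge G\chi_B$, together with the subsequent comparisons $G\chi_B\approx |B|\,G(\cdot,x_1)$ and the quasi-metric estimate over $y\in B$, is anchored to an object that may not be there. A second, lesser issue: your ``clean route'' invokes quasi-metric modifiability of $G$ with modifier $m$ and boundary-Harnack-type estimates, which are properties of bounded uniform domains, while the lemma is stated and proved in the paper for an arbitrary domain with nontrivial Green's function; that route would only prove a weaker statement (though sufficient for the paper's applications). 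Both problems disappear if you argue as the paper does, with no ball and no uniformity: choose a relatively compact subdomain $U$ with $K\cup\{x_0\}\subset U\subset\overline U\subset\Omega$. For $x\in\Omega\setminus U$, the function $y\mapsto G(x,y)$ is positive and harmonic on $U$, so interior Harnack over the compact set $K\cup\{x_0\}$ gives $G(x,y)\ge C^{-1}G(x,x_0)$ for all $y\in K$; integrating over $K$ (this is where $|K|>0$ enters, not interior points) yields $G\chi_K(x)\ge C^{-1}|K|\,m(x)$. For $x\in\overline U$, strict positivity and lower semicontinuity of $G$ on $\Omega\times\Omega$ give $G\chi_K(x)\ge |K|\min_{\overline U\times\overline U}G>0$, which dominates $c_K\,m(x)$ since $m\le 1$.
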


\begin{proof} We first prove inequality (\ref{GchiKest}).  Suppose $n \geq 3$  (the case $n=2$ is handled in a similar way with obvious modifications). We assume $|K|>0$, else the result is trivial.  We also assume that $x_0 \in K$; if not, replacing $K$ with $K \cup \{x_0\}$ does not change $G \chi_K$.  We first claim that there exists a constant $C_1(K)$ depending on $K$ and $x_0$ such that 
\begin{equation} \label{GchiKbnd1}
G \chi_K (x) \leq C_1 (K) ,
\end{equation}
for all $x \in \Omega$.  To prove this claim, we recall the standard fact that $G(x,y) \leq C|x-y|^{2-n}$ for all $x, y \in \Omega$.  Let $R$ be the diameter of $K$.  Then there exists $y_0 \in K$ such that $K \subseteq \overline{B(y_0, R)}$.  If $x \in B(y_0, 2R)$, then $K \subseteq B(x, 3R)$ and 
\[  \int_K G(x,y) \, dy \leq \int_{B(x, 3R)} \frac{c}{|x-y|^{n-2}} \, dy \leq c \int_0^{3R} \frac{r^{n-1}}{r^{n-2}} \, dr = c R^2.   \] 
If $x \not\in B(y_0, 2R)$ then $|x-y|^{2-n} \leq R^{2-n}$ for all $y \in K$, so
\[  \int_K G(x,y) \, dy \leq C R^{2-n} |K| \leq c R^2 . \]

Next we claim that there exists a constant $C_2$ depending on $\Omega$, $K$ and $x_0$ such that 
\begin{equation} \label{GchiKbnd2}
G \chi_K (x) \leq C_2  \, G(x, x_0) ,
\end{equation}
for all $x \in \Omega$.  For this claim, let $U$ be a subdomain of $\Omega$ such that 
$x_0\in U$, $K \subseteq U$ and $\overline{U} \subseteq \Omega$.  
If $x \in \Omega \setminus U$, then $G(x,y)$ is a positive harmonic function of $y$ in $U$, so by Harnack's inequality (e.g., see \cite{AG}, Corollary 1.4.4), there exists a constant  $C(K, U)$ such that $G(x,y) \leq C(K, U) \, G(x, x_0)$ for all $y \in K$.  Hence 
\[ \int_K G(x,y) \, dy \leq  C(K, U) \, |K| \, G(x, x_0).\]
 Since  a fixed domain $U$ depends only on $x_0$, $K$, and $\Omega$, we can replace $C(K, U)$ with $C(x_0, K, \Omega)$.  
On the other hand, suppose $x \in U$. Note that $G(z, x_0)$ is a strictly positive lower semi-continuous function of $z \in \Omega$ and hence $M = \min \{ G(z, x_0): \, z \in \overline{U} \} >0$, where $M$ depends on $\Omega, x_0$ and $U$, hence $K$. Hence by equation (\ref{GchiKbnd1}),
\[  G \chi_K (x) \leq C_1 (K) \leq \frac{C_1(K)}{M} \, G(x, x_0). \]

Since $m(x) = \min (1, G(x, x_0))$, inequalities (\ref{GchiKbnd1}) and (\ref{GchiKbnd2}) imply inequality (\ref{GchiKest}).

To prove inequality (\ref{lowerGchiKest}), let $U$ be as above. For $x \in \Omega \setminus U$, the same application of Harnack's inequality as above gives that $G(x,y) \geq C (x_0, K, \Omega)^{-1} G(x, x_0)$ for all $y \in K$.  Hence 
\[   \int_K G(x,y) \, dy \geq  C(K, \Omega)^{-1} \, |K| \, G(x, x_0) \geq C(K, \Omega)^{-1} |K| \,  m(x). \]
Now suppose $x \in \overline{U}$.  Note that $G(z,y) $ is a strictly positive lower semi-continuous function of $(z,y)$ in $\Omega\times \Omega$ (see \cite{AG}, Theorem 4.1.9).  Hence $C_3 (\overline{U}) = \min \{ G(z,y) \,: \, 
(z, y) \in \overline{U}\times  \overline{U}$ is attained at some point in the compact set $\overline{U}\times  \overline{U}$. 
 In particular, $C_3(\overline{U})>0$.  Since $m(x) \leq 1$, 
\[ \int_K G(x,y) \, dy \geq C_3 (\overline{U}) \, |K| = C_3 (x_0, K, \Omega) \, m(x) . \]
\end{proof}

\begin{Lemma}\label{low-M-est} Suppose $\Omega \subset \R^n$ ($n\ge 2$)  is a bounded uniform domain.  
Suppose $x_0\in \Omega$ is a reference point for the  
Martin kernel. Then there exists a positive constant $c$ depending only on $x_0$ and 
$\Omega$ such that 
\begin{equation}\label{martin-low}
M(x, z) \ge c \, m(x), \qquad \text{for all} \, \,  (x, z) \in \Omega\times\partial \Omega ,
\end{equation}
where $m(x)=\min (1, G(x, x_0))$.

 In particular, if $\omega$ is a locally finite Borel measure in $\Omega$ such that $M^{*} (m \, \omega) \not\equiv+\infty$, then $m \in L^2 (\Omega, \omega)$. 
\end{Lemma}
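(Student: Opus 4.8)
The plan is to obtain the pointwise lower bound $M(x,z)\ge c\,m(x)$ directly from the quasi-metric structure of $G$, and then read off the $L^2$ statement as an immediate corollary. Recall (as recorded after Theorem \ref{FNVTheorem}) that $d(x,y)=\frac{m(x)\,m(y)}{G(x,y)}$ is a quasi-metric on $\Omega$ with constant $\kappa$ depending only on $\Omega$, and that $m(x_0)=\min(1,G(x_0,x_0))=1$. The first step is the algebraic identity
\[
\frac{G(x,y)}{m(x)\,G(x_0,y)}=\frac{d(x_0,y)}{d(x,y)},\qquad x,y\in\Omega,
\]
which, in view of the definition (\ref{martin-K}), gives $\frac{M(x,z)}{m(x)}=\lim_{y\to z}\frac{d(x_0,y)}{d(x,y)}$ for every $z\in\partial\Omega$.

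Next I would apply the quasi-triangle inequality with $x_0$ as pivot, $d(x,y)\le\kappa\bigl(d(x,x_0)+d(x_0,y)\bigr)$, to get
\[
\frac{d(x_0,y)}{d(x,y)}\ \ge\ \frac{1}{\kappa}\cdot\frac{d(x_0,y)}{d(x,x_0)+d(x_0,y)} .
\]
Two elementary facts then close the estimate. First, $d(x,x_0)=\frac{m(x)}{G(x,x_0)}=\min\!\bigl(\tfrac{1}{G(x,x_0)},1\bigr)\le 1$ for every $x\in\Omega$. Second, $d(x_0,y)=\min\!\bigl(\tfrac{1}{G(x_0,y)},1\bigr)$, and for $y$ in a neighborhood of any boundary point $z$ one has $|y-x_0|\ge\tfrac12\,\mathrm{dist}(x_0,\partial\Omega)=:\tfrac{\delta}{2}$, hence $G(x_0,y)\le C_n(\delta/2)^{2-n}$ (the corresponding logarithmic bound for $n=2$), so $d(x_0,y)\ge\lambda$ for some $\lambda=\lambda(x_0,\Omega)>0$. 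Since $t\mapsto\frac{t}{a+t}$ is increasing in $t$ and decreasing in $a$, these bounds give $\frac{d(x_0,y)}{d(x,x_0)+d(x_0,y)}\ge\frac{\lambda}{1+\lambda}$ for all such $y$; letting $y\to z$ in the identity above yields $M(x,z)\ge c\,m(x)$ with $c=\frac{\lambda}{\kappa(1+\lambda)}$, depending only on $x_0$ and $\Omega$.

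For the ``in particular'' clause, once $M(x,z)\ge c\,m(x)$ holds, (\ref{defMstar}) with $\mu=m\,\omega$ gives, for every $z\in\partial\Omega$,
\[
M^{*}(m\,\omega)(z)=\int_\Omega M(x,z)\,m(x)\,d\omega(x)\ \ge\ c\int_\Omega m(x)^2\,d\omega(x)=c\,\|m\|_{L^2(\Omega,\omega)}^2 ,
\]
so if $M^{*}(m\,\omega)\not\equiv+\infty$, choosing any $z_0$ with $M^{*}(m\,\omega)(z_0)<\infty$ shows $m\in L^2(\Omega,\omega)$.

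The one point needing care is the uniformity of $c$ over all $z\in\partial\Omega$: boundary points near $x_0$ cannot be handled by feeding the triple $(x_0,x,y)$ into the strong triangle property (\ref{strong-quasi}), because the hypothesis $|x_0-x|\le|x_0-y|$ fails precisely when $z$ (hence $y$) is closer to $x_0$ than $x$ is. Routing the estimate through the plain quasi-triangle inequality with pivot $x_0$, as above, bypasses this case distinction and uses no regularity of $\partial\Omega$, which matters since a uniform domain need not be regular. One should also verify that $G(x_0,\cdot)$ is genuinely bounded on a neighborhood of $\partial\Omega$ — it is, by the Euclidean upper bound $G(x,y)\le C_n|x-y|^{2-n}$ together with $\mathrm{dist}(x_0,\partial\Omega)>0$ — so that $\lambda$ may be taken independent of $z$.
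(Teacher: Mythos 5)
Your argument is correct, and it reaches \eqref{martin-low} by a genuinely different route than the paper. The paper splits into two cases: for $x$ in a small ball $B(x_0,r)$ it uses Harnack's inequality for the positive harmonic function $M(\cdot,z)$ together with the normalization $M(x_0,z)=1$, and for $x$ outside that ball it uses the classical $3$-G inequality $\frac{G(x,x_0)G(x_0,y)}{G(x,y)}\le C\bigl(|x-x_0|^{2-n}+|y-x_0|^{2-n}\bigr)$ before letting $y\to z$. You instead fold everything into the single quasi-triangle inequality for the modified kernel $d(x,y)=\frac{m(x)m(y)}{G(x,y)}$ with pivot $x_0$, combined with the elementary bounds $d(x,x_0)\le 1$ and $d(x_0,y)\ge\lambda$ for $y$ near $\partial\Omega$; your identity $\frac{G(x,y)}{m(x)G(x_0,y)}=\frac{d(x_0,y)}{d(x,y)}$ (using $m(x_0)=1$) and the monotonicity of $t\mapsto t/(a+t)$ then give the uniform constant $c=\lambda/(\kappa(1+\lambda))$ with no case distinction and no appeal to Harnack. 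What your route buys is brevity and an explicit constant depending only on $\kappa$ and ${\rm dist}(x_0,\partial\Omega)$; what the paper's route buys is that it leans only on the Euclidean-form $3$-G inequality, whose planar analogue it can cite directly from [Han2]. The one point you should make explicit for $n=2$ is that the quasi-metric modifiability of $G$ with modifier $m$ (the fact you invoke "as recorded after Theorem \ref{FNVTheorem}") is available for bounded planar domains as well — the paper's displayed strong form \eqref{strong-quasi} is stated only for $n\ge3$, and its own treatment of the planar case goes through the $3$-G analogue of [Han2] — but this is a matter of citing the right version of a known result, not a mathematical gap; your bound $d(x_0,y)\ge\lambda$ in the plane is fine since the Green function of a bounded planar domain is bounded off the diagonal by comparison with a disc.
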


\begin{proof} Fix $z \in \partial \Omega$. Let  $B(x_0, r) \subset \Omega$, 
where $0<r\le \frac{1}{2} \, \text{dist} \, (x_0, \partial \Omega)$. Since $M(\cdot, z)$ is a positive  harmonic 
function in $\Omega$,  by Harnack's inequality in $B(x_0, 2r)$,   
there exists a constant $c>0$ depending only on $x_0$ and $r$ such that $M(x, z) \ge c \, M(x_0, z)$, 
for all $x \in B(x_0, r)$ where $M(x_0, z)=1$. Hence,  
\begin{equation}\label{m-c}
M(x, z) \geq c >0, \quad \text{for all} \, \, x \in B(x_0, r) . 
\end{equation} 

For $x \in \Omega\setminus B(x_0, r)$, we argue that by the $3$-G inequality in a 
bounded uniform domain ($n\ge 3$), 
\[
\frac{G(x, x_0) \, G(x_0, y)}{G(x, y)}\le C \, \left( |x-x_0|^{2-n} + |y-x_0|^{2-n} \right) , 
\] 
for all $y \in \Omega$, where $C$ depends only on $\Omega$, see \cite{Han1}. Hence, for $x, y \in  \Omega\setminus B(x_0, r)$, 
\[
\frac{G(x, y)}{G(x_0, y)}\ge C^{-1} \, \frac{G(x, x_0)}{|x-x_0|^{2-n} + |y-x_0|^{2-n} }\ge  C^{-1} 2 r^{n-2} \, G(x, x_0). 
\]
(For $n=2$, an analogue of the $3$-G inequality holds in any bounded domain \cite{Han2}.) 
Letting $y \rightarrow z$,  where without loss of generality we may assume that $y \in  \Omega\setminus B(x_0, r)$, we deduce 
\begin{equation}\label{m-g}
M(x, z) \ge   C^{-1} 2 r^{n-2}\, G(x, x_0) , \quad \text{for all} \, \, x \in  \Omega\setminus B(x_0, r) . 
\end{equation} 
Combining  estimates \eqref{m-c} and \eqref{m-g} 
yields \eqref{martin-low}. 

If $\omega$ is a locally finite Borel measure in $\Omega$ such that $M^{*} (m \, \omega)\not\equiv
+\infty$, then  for some $z\in \partial \Omega$ by \eqref{martin-low}  $\int_\Omega m^2 d \omega
\le c M^* (m \omega)(z) <+\infty$, i.e., $m \in L^2(\Omega, \omega)$.
\end{proof}

\bigskip

\begin{Lemma}\label{conv-lemma}
Suppose $\Omega \subset \R^n$ ($n\ge 2$)  is a bounded uniform domain.   
Suppose $\mu$ is a finite Borel measure  with compact support in $\Omega$.    Let $z \in \partial \Omega$. 
Then 
\begin{equation}\label{min-thin2}
 \lim_{x \rightarrow z, \, x\in\Omega} \frac{G \mu (x)}{G(x, x_0)} = \int_{\Omega} M(y,z) \,  d\mu(y) = M^* \mu (z). 
 \end{equation}
In addition, if $z$ is a regular point of $\partial \Omega$, then 
\begin{equation}\label{min-thin}
 \lim_{x \rightarrow z, \, x\in\Omega} \frac{G \mu (x)}{m(x)} = \int_{\Omega} M(y,z) \,  d\mu(y) = M^* \mu (z).
 \end{equation}
 \end{Lemma}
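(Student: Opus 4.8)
The plan is to reduce both \eqref{min-thin2} and \eqref{min-thin} to a dominated-convergence argument combined with the defining limit \eqref{martin-K} for the Martin kernel. First I would write $G\mu(x)/G(x,x_0) = \int_\Omega \frac{G(x,y)}{G(x,x_0)}\, d\mu(y)$ and observe that, since $\mu$ has compact support $K\subset\Omega$, for $x$ near the boundary point $z$ the quantity $|x-y|$ is bounded below (say $|x-x_0|\le|x-y|$ once $x$ is close enough to $z$, using that $x_0\in\Omega$ is interior and $K$ is a fixed compact set), so the strong generalized triangle property \eqref{strong-quasi} — equivalently, the uniform boundary Harnack principle — gives a bound $\frac{G(x,y)}{m(x)}\le\kappa\,\frac{G(x_0,y)}{m(x_0)}$, hence $\frac{G(x,y)}{G(x,x_0)}\le C\,G(x_0,y)$ for all $y\in K$ and all $x$ sufficiently close to $z$. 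Since $\int_K G(x_0,y)\,d\mu(y)=G\mu(x_0)<\infty$ (as $\mu$ is finite with compact support and $G(x_0,\cdot)$ is bounded on $K$), the integrand is dominated uniformly by an integrable function of $y$. The pointwise limit as $x\to z$ is exactly $M(y,z)$ by \eqref{martin-K}, so dominated convergence yields \eqref{min-thin2}.

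For \eqref{min-thin}, I would split into the two cases in the definition $m(x)=\min(1,G(x,x_0))$. Near a boundary point $z$, the behavior depends on whether $G(x,x_0)\to 0$; but the cleaner route is: write $\frac{G\mu(x)}{m(x)} = \frac{G\mu(x)}{G(x,x_0)}\cdot\frac{G(x,x_0)}{m(x)}$. The first factor converges to $M^*\mu(z)$ by \eqref{min-thin2}. For the second factor, note that if $z$ is a regular point of $\partial\Omega$ then $G(x,x_0)\to 0$ as $x\to z$ (regularity of $z$ means $\lim_{x\to z}G(x,y)=0$ for each fixed $y$, see \cite{AG}), so eventually $G(x,x_0)<1$ and hence $m(x)=G(x,x_0)$, making the second factor identically $1$ for $x$ near $z$. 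This gives \eqref{min-thin}. The hypothesis of regularity is used precisely here and only here; without it $G(x,x_0)$ need not tend to $0$ and $m(x)$ could stay pinned at $1$ while $G(x,x_0)$ stays bounded away from $0$, so the two normalizations differ by a nonconstant factor.

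The main obstacle is establishing the uniform domination of $\frac{G(x,y)}{G(x,x_0)}$ for $y$ ranging over the compact support and $x$ ranging over a neighborhood of $z$ — this is where the uniform boundary Harnack principle (via \eqref{strong-quasi}) is essential, since an $x_0$-independent comparison constant is needed. One must check carefully that the geometric condition $|x-x_0|\le|x-y|$ in \eqref{strong-quasi} holds for all $y\in K$ once $x$ is within a small enough neighborhood of $z$: this follows because $\mathrm{dist}(z,x_0)$ and $\mathrm{dist}(z,K)$ are both positive and fixed, so for $x$ with $|x-z|$ small, $|x-x_0|$ is close to $|z-x_0|$ and $|x-y|$ is at least $|z-y|-|x-z|$, uniformly in $y\in K$; choosing the neighborhood small enough relative to these fixed distances makes the inequality hold. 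Everything else is routine: applying \eqref{martin-K} pointwise, invoking dominated convergence, and in the regular-point case invoking $G(\cdot,x_0)\to 0$ at $z$.
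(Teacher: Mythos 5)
Your overall strategy (dominated convergence plus the defining limit \eqref{martin-K}, and, at a regular point, $G(x,x_0)\to 0$ forcing $m(x)=G(x,x_0)$ eventually) is the same as the paper's, and your treatment of \eqref{min-thin} given \eqref{min-thin2} is exactly the paper's. But the step you yourself flag as the main obstacle --- the uniform domination of $G(x,y)/G(x,x_0)$ for $y\in K=\supp\mu$ --- has a genuine gap. To apply \eqref{strong-quasi} with $x_1=x$, $x_2=x_0$ you need $|x-x_0|\le |x-y|$ for \emph{all} $y\in K$, and your verification of this is incorrect: as $x\to z$ these distances tend to $|z-x_0|$ and $|z-y|$, so the condition fails (for all $x$ near $z$, not just some) whenever $K$ contains points closer to $z$ than $x_0$ is, e.g.\ $|z-x_0|=1$ and some $y^*\in K$ with $|z-y^*|=0.1$. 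Shrinking the neighborhood of $z$ cannot repair this, since the inequality is violated in the limit. A second defect: even where the bound $\frac{G(x,y)}{G(x,x_0)}\le C\,G(x_0,y)$ is available, the dominating function $G(x_0,\cdot)$ need not be $\mu$-integrable, because nothing in the lemma prevents $x_0\in\supp\mu$ (take $\mu=\delta_{x_0}$, for which $G\mu(x_0)=\infty$ yet \eqref{min-thin2} holds trivially with limit $M(x_0,z)=1$); your assertion that $G(x_0,\cdot)$ is bounded on $K$ presupposes $x_0\notin K$.

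The paper avoids both problems without any boundary Harnack input: choose a relatively compact subdomain $U\subset\Omega$ containing both $K$ and $x_0$ (as in the proof of Lemma \ref{estGchiK}). For $x$ close enough to $z$ one has $x\notin\overline U$, so $y\mapsto G(y,x)$ is a positive harmonic function on $U$, and the classical interior Harnack inequality on the compact set $K\cup\{x_0\}\subset U$ gives $G(y,x)\le C(K,U)\,G(x_0,x)$ for all $y\in K$. Thus $\frac{G(y,x)}{G(x,x_0)}\le C(K,U)$, a constant, which is $\mu$-integrable simply because $\mu$ is finite; dominated convergence then yields \eqref{min-thin2}, and incidentally shows $M^*\mu(z)<\infty$. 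If you want to keep your structure, replace the appeal to \eqref{strong-quasi} by this interior Harnack comparison (no geometric side condition, constant dominant); the rest of your argument then goes through.
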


\begin{proof}   By \eqref{martin-K}, if $y\in\Omega$ and $x_j\rightarrow z$ ($x_j\in \Omega$),  then 
\[
\lim_{j \rightarrow \infty} G(y, x_j) / G (x_j, x_0) = M(y,z) . 
\]
As in the proof of Lemma \ref{estGchiK}, we denote by $U$ a relatively compact domain in $\Omega$ 
that contains both $x_0$ and $K$. Since $x_j \rightarrow z$, where $z\in \partial\Omega$, we have that $x_j \not \in \overline{U}$ for $j \ge j_0$. Then  $G(y, x_j)$ is a harmonic function of $y\in U$, and for each $j \ge j_0$,  by Harnack's inequality, 
\[
G(y, x_j) \le C(K, U) \, G(x_0, x_j), \qquad \text{for all} \, \, y \in K . 
\]
 Since $\mu$ is a finite measure, we obtain \eqref{min-thin2} by the dominated 
convergence theorem. 

If $z$ is a regular point of $\partial \Omega$, then $G (x_j, x_0)\rightarrow 0$ as $j \rightarrow \infty$, 
and consequently $m(x_j)=G (x_j, x_0)$ for $j$ large enough. Hence,  \eqref{min-thin} follows from 
 \eqref{min-thin2}.

\end{proof} 

In Lemma \ref{conv-lemma}, $\mu$ is a finite Borel measure  with compact support in $\Omega$.  We remark that more generally, for $\mu$ only locally finite,  
\begin{equation}\label{martin1}
\liminf_{x\rightarrow z, \, \, x\in \Omega} \frac{G \mu(x)}{G(x_0, x)}\ge \int_\Omega M(x, z) \, d \mu(x),
\end{equation}
for $z\in \triangle$ (a Martin boundary point), by Fatou's Lemma.  In fact, by \cite{AG}, Theorem 9.2.7, for any Green's potential $G \mu$ and $z\in \triangle_1$  
(a Martin boundary point where $\Omega$ is not minimally thin), 
\begin{equation}\label{martin2}
\liminf_{x\rightarrow z, \, \, x\in \Omega} \frac{G \mu(x)}{G(x_0, x)}= \int_\Omega M(x, z) \, d \mu(x).
\end{equation}
For uniform domains, $\triangle=\triangle_1=\partial\Omega$, so that \eqref{martin2} holds for all 
$z\in \partial\Omega$. We could use this fact in our proof below, but we prefer the more elementary approach in Lemma \ref{conv-lemma}. The compact support restriction can be removed later in the proof by exhausting $\Omega$ with a sequence of nested domains $\Omega_j$, and using the monotone convergence theorem.

\vspace{0.1in}

\begin{proofof} Theorem \ref{gaugecrit}. 

\vspace{0.1in}

(A) Suppose $\Vert T \Vert <1$ and \eqref{martincritsuff} holds. Define 
\begin{equation}\label{defngreenpot}
  Gf (x) = \int_{\Omega} G(x,y) f(y) \, dy,   \,\,\, x \in \Omega. 
\end{equation}
Let $G_1 =G$, and let  $G_j (x,y)$ be the kernel of the $j^{th}$ iterate $T^j$ of $T$ defined by (\ref{defT}), so that 
\begin{equation} \label{defGj}
T^j h(x) = \int_{\Omega} G_j(x,y) h(y)\, d \omega (y).
\end{equation}
Then  $G_j$ in \eqref{defGj} is determined inductively for $j \geq 2$ by
\begin{equation}\label{new-defGj}
 G_j (x,y) = \int_{\Omega} G_{j-1} (x, w) G(w,y) \,  d\omega(w). 
\end{equation}
We define the minimal Green's function associated with the Schr\"{o}dinger operator $-\triangle - \omega$ to be   
\begin{equation}\label{defGreenSchr}
 \mathcal{G} (x,y) = \sum_{j=1}^{\infty} G_j (x,y),  \qquad \text{for all} \, \,  x, y \in \Omega.  
 \end{equation} 
 The corresponding Green's operator is 
\[  \mathcal{G}f(x)  = \int_{\Omega} \mathcal{G}(x,y) f(y) \, dy, \quad x \in \Omega .\] 

Let 
$K$ be a compact set in $\Omega$. Denote by $u_K$ a solution to the equation
\begin{equation}\label{K-eqn} 
\left\{ \begin{aligned}
-\triangle  u & = \omega u + \chi_K\, \,&  &  \mbox{in} \, \,  \Omega, \quad  u \ge 0,  \\
u & = 0 \, \, &  &\mbox{on} \, \,  \partial \Omega .
\end{aligned}
\right.  
\end{equation} 
 In other words, 
\begin{equation} \label{eqnforuK}
u_K = G(u_K \omega) + G\chi_K . 
\end{equation}

By Lemma \ref{estGchiK},  $G\chi_K (x)\approx m(x)$ in $\Omega$ if   
 $m(x)=\min(1, G(x, x_0))$. 
 Without loss of generality we may assume that $m \in L^2 (\Omega, \omega)$; otherwise 
 $M^*(m \, \omega) \equiv +\infty$ by Lemma \ref{low-M-est}, and 
 condition  \eqref{martincritsuff} is not valid. It follows that $G \chi_K \in  L^2 (\Omega, \omega)$. 
 But $||T||<1$, so that $u_K=(I-T)^{-1} G \chi_K\in L^2(\Omega, \omega)$, and the series in \eqref{defuK} converges in 
 $L^2(\Omega, \omega)$ (and hence $d \omega$-a.e.).  In particular, $G(u_K \omega)\not\equiv \infty$.

From this fact it is immediate that the minimal superharmonic solution to \eqref{eqnforuK} is given by 
\begin{equation} \label{defuK}
\begin{aligned} 
u_K (x) & := G(u_K \omega) + G\chi_K = (I-T)^{-1} G \chi_K (x) \\& = \sum_{j=0}^{\infty} T^j (G \chi_K) (x)= \int_{K} \mathcal{G} (x,y) \, dy ,
\end{aligned} 
\end{equation}
for all $x \in \Omega$. 

By equation (\ref{ufsolndef}), 
\begin{align*}  
 u_f (x)  & = Pf(x) + \sum_{j=1}^{\infty} T^j (Pf)(x) \\
  & =  Pf(x)+   \int_{\Omega}  \mathcal{G} (x,y) \, Pf (y) \, d\omega(y),
 \end{align*}
for all $x \in \Omega$. Integrating both sides of this equation over $K$ with respect to $dx$, 
\begin{equation}\label{intKuf}
\begin{aligned}  
\int_{K} u_f (x) \, dx & = \int_K Pf(x) \, dx + \int_{K} \int_{\Omega} \mathcal{G} (x,y) \, Pf(y) \, d \omega (y) \, dx \\
  & = \int_K Pf(x) \, dx +  \int_{\Omega}  \int_{K}  \mathcal{G} (x,y) \, dx \, Pf (y) \, d\omega(y) \\ & =   \int_K Pf(x) \, dx  +  \int_{\Omega} u_K (y)\, Pf (y) \, d\omega(y) ,
\end{aligned}
\end{equation}
by Fubini's theorem, equation (\ref{defuK}) and the symmetry of $\mathcal{G}$. 

The term $\int_K Pf(x) \, dx $ is finite because \eqref{martincritsuff} guarantees that $f$ is integrable with respect to harmonic measure, so $Pf$ is not identically infinite, and so is harmonic.  Thus to prove that $u_f \in L^1 (K, dx)$, it suffices to show that $u_K Pf \in L^1 (\Omega, \omega)$.  

By \eqref{pf-rep-martin} and Fubini's theorem, 
\[
\int_{\Omega} u_K(y) \, Pf(y) \, d\omega(y)   = \int_{\partial \Omega}  \int_{\Omega} M(y,z) u_K (y) \, d \omega (y) \, f(z) \, dH^{x_0} (z) . 
\]

 We claim that 
\begin{equation} \label{Mu0est}
\int_{\Omega} M(y,z) u_K (y) \, d \omega (y) \leq C_K \, e^{C \,  M^* (m\omega) (z)}, 
\end{equation}
if $z$ is a regular point of $\partial\Omega$. Assuming \eqref{Mu0est} for the moment, 
the set of irregular boundary points $E \subset \partial\Omega$ 
 is known to be Borel and polar, i.e., $\text{cap}(E)=0$ (\cite{AG}, Theorem 6.6.8), and consequently 
 negligible, i.e., of harmonic measure zero (\cite{AG}, Theorem 6.5.5). Therefore \eqref{Mu0est} yields
\begin{equation} \label{u_K-Pf}
 \int_{\Omega} u_K(y) \, Pf(y) \, d\omega(y) 
 \leq C_K \int_{\partial \Omega}  e^{C \, M^* (m\omega) (z)} \, 
f(z) \, d H^{x_0} (z).
 \end{equation}
Hence our assumption \eqref{martincritsuff} guarantees that $u_K \, Pf  \in L^1 (\Omega, \omega)$.

To prove (\ref{Mu0est}), let us assume first that $\omega$ is compactly supported. Then as mentioned above after \eqref{eqnforuK}, $u_K \in L^2 (\Omega,  \omega)$. Hence, by Cauchy's inequality, $d \mu= u_K \, d \omega$ is a finite compactly supported measure.
By equation \eqref{defuK}, Lemma \ref{estGchiK}, and Theorem \ref{FNVTheorem}, 
\begin{equation} \label{uKexpest}
  u_K (x) \leq  C_K \sum_{j=0}^{\infty} T^j m (x) \leq C_K \,  m(x) \, e^{C G(m\omega)(x) /m(x) },
	\end{equation}
since $Tm= G(m \omega)$. Using the trivial estimate $m(\cdot) \le G(x_0, \cdot)$, followed by \eqref{eqnforuK} and then \eqref{uKexpest},
\begin{equation} \label{GuKomegaest}
 \frac{G(u_K\omega)(x)}{G(x, x_0)} \leq \frac{G(u_K\omega)(x)}{m(x)} \leq  \frac{u_K(x)}{m(x)} \leq C_K e^{C G(m\omega)(x)/m(x)} ,
\end{equation}
for $x \in \Omega$.  Applying \eqref{min-thin2} with $d\mu = u_K d\omega$ and then \eqref{min-thin} with $d\mu = m d\omega$,
\begin{align*}
  \int_{\Omega} M(y,z) u_K (y) \, d \omega (y) & = \lim_{x \rightarrow z, x \in \Omega} \frac{G(u_K\omega)(x)}{G(x, x_0)} \\
	& \leq \lim_{x \rightarrow z, x \in \Omega} C_K e^{C G(m\omega)(x)/m(x)} \\
	& = C_K e^{ M^* (m\omega) (z) } , 
	\end{align*}
where the regularity of $z \in \partial \Omega$ is used only at the last step.  Hence \eqref{Mu0est} is established for compactly supported measures $\omega$.

In the general case, consider an exhaustion  $\Omega=\cup_{k=1}^{\infty} \Omega_k$, where $\{\Omega_k\}$ is a family of nested,  relatively compact subdomains of $\Omega$. Without loss of generality we may assume that $x_0\in \Omega_k$, for all $k \in \N$. 

   In $\Omega\times\Omega$, define the iterated Green's kernels $G_j^{(k)} (x,y)$ for $j\in \N$, and $\mathcal{G}^{(k)}(x,y)=\sum_{j=1}^\infty G_j^{(k)} (x,y)$, as in  (\ref{new-defGj}), (\ref{defGreenSchr}), except with $\omega$ replaced by $\omega_k$, $k \in \N$.  Let $u_K^{(k)} = \mathcal{G}^{(k)} \chi_K$.  By repeated use of the monotone convergence theorem, we see that $G_j^{(k)} (x,y)$  increases monotonically as $k \rightarrow \infty$ to $G_j (x,y)$ for each $j$, $\mathcal{G}^{(k)}(x,y)$ increases monotonically to $\mathcal{G} (x,y)$, and $u_K^{(k)}$ increases monotonically to $u_K$.  Applying the compact support case gives  
\begin{align*} 
\int_{\Omega} M(y,z) u_K^{(k)} (y) \ \chi_{\Omega_k}(y) \, d\omega(y) & \leq C_K e^{C \,   M^* (m \, \omega_k) (z)} \\ & \leq   C_K e^{C \,  M^* (m \, \omega) (z)} . 
\end{align*} 
Then, as $k \rightarrow \infty$, the monotone convergence theorem yields (\ref{Mu0est}).

\vspace{0.1in}

(B)  Suppose $u_f \in L^1_{loc} (\Omega, dx)$, where $f \not= 0$ a.e. relative to harmonic measure, and 
\[ u_f = Tu_f + Pf  \qquad  \,\,  \text{on} \, \,   \Omega .\]
So $Tu_f \leq u_f$, where $0<u_f<\infty$  $d \omega$-a.e. 
It follows  by Schur's lemma that $\Vert T \Vert_{L^2 (\omega)\rightarrow L^2 (\omega)} \leq 1$. 

It remains to show that \eqref{martincritnec}  holds. 
We remark that this condition   follows immediately from  \eqref{ptwiselowerbnd} with 
 $x=x_0$ provided $u_f(x_0) < \infty$. Since this is not necessarily the case, we proceed as follows.

Choose any compact set $K \subseteq \Omega$ with $|K|>0$. 
By Lemma \ref{estGchiK} and Theorem \ref{FNVTheorem}, 
\begin{equation} \label{lowestuK}
 u_K (x) = \sum_{j=0}^{\infty} T^j G\chi_K (x) \geq c_K \sum_{j=0}^{\infty} T^j m (x)  \geq c_K m(x) e^{c(Tm(x))/(m(x))}, 
\end{equation}
for all $x \in \Omega$. In fact, we can let $c=1$ in the preceding 
 estimate, exactly as in the proof of  \eqref{lowerTM} above, by using \cite{GV2}, Theorem 1.2 with $q=1$, $h=m$, and $\mathfrak{b}=1$.   Notice 
 that $m$ is a superharmonic function in $\Omega$, and so the Maria-Frostman domination 
 principle yields \eqref{weak-dom} with $\mathfrak{b}=1$ and $h=m$. 

By inequality \eqref{lowestuK}, equation (\ref{eqnforuK}) and inequality (\ref{GchiKest}), 
\begin{equation} \label{Tm-m}
\begin{aligned}
 e^{Tm(x)/(m(x))} & \leq c_K^{-1} \frac{u_K(x)}{m(x)} = c_K^{-1} \left( \frac{G(u_K \omega)(x)}{m(x)} + \frac{G\chi_K (x)}{m(x)} \right) 
\\ & \leq c_K^{-1} \frac{G(u_K \omega)(x)}{m(x)} + C_K c_K^{-1} . 
 \end{aligned}
 \end{equation}
 
 Let $z \in \partial \Omega$ be a regular point. Applying Lemma \ref{conv-lemma} with $d\mu= m d\omega$ on the left side of (\ref{Tm-m}) (recalling that $Tm= G(m \omega)$), and with $d\mu = u_K \omega$ on the right side, we obtain 
\begin{equation}  \label{M-claim}
 e^{ M^* (m \omega) (z)}  \leq  c_K^{-1}  \, \int_{\Omega} M(y,z) u_K (y) \, d \omega (y) + C_K c_K^{-1},  
\end{equation}
if $\omega$ has compact support in $\Omega$. By the same exhaustion process that was used in the opposite direction, (\ref{M-claim}) holds for $\omega$ locally finite in $\Omega$.

Since the set of irregular points in $\partial \Omega$ has harmonic measure $0$, as noted above, we can integrate \eqref{M-claim} over $\partial \Omega$ with respect to $f \, dH^{x_0}$  and apply Fubini's theorem to obtain 
\begin{align*}
 &  \int_{\partial \Omega}   e^{M^* (m \omega) (z)} \, f(z) \, dH^{x_0} (z) \\ &
 \leq C_1 c_K^{-1} \int_{\Omega} \int_{\partial \Omega} M(y,z) \, f(z) \, dH^{x_0} (z) u_K (y) \, d \omega (y) \\&  + C_K c_K^{-1}  \int_{\partial \Omega} \, f(z) \, dH^{x_0} (z)  \\
& = C_1 c_K^{-1}  \int_{\Omega} u_K (y) \, Pf(y) \, d \omega (y)  + C_K c_K^{-1}  \int_{\partial \Omega} \, f(z) \, dH^{x_0} (z) ,   
\end{align*}
using equation \eqref{pf-rep-martin}.  Since $u_K \, Pf \in L^1 (\Omega, \omega)$ by \eqref{intKuf}, we have condition \eqref{martincritnec}.      
  \end{proofof}

\noindent{\bf Remark.}   For part (A) of Theorem \ref{gaugecrit}  and Corollary \ref{cor}, if $\Omega$ is a bounded 
$C^{1,1}$ domain, or a bounded Lipschitz domain with sufficiently small Lipschitz constant, then $G\chi_{\Omega} \approx m$ (see, for instance, \cite{AAC}, Theorem 1.1 and Remark 1.2(i)). 
Hence, $\int_\Omega M(x, z) \, dx\le C$, where $C$ does not depend on $z \in\partial \Omega$. Then one can replace $\chi_K$ above with $\chi_{\Omega}$ and obtain that $u_f \in L^1 (\Omega, dx)$ with 
\[   \int_{\Omega} u_f (x) \, dx \leq  C \int_{\partial \Omega}  f(z) \, dH^{x_0} (z)
 +  C \int_{\partial \Omega}  e^{CM^* (m \omega) (z)} \, f(z) \, dH^{x_0} (z) . \]

In the same way that Theorem \ref{mainu-harm} generalizes Theorem \ref{mainufest}, there is a complete analogue of Theorem \ref{gaugecrit} for solutions of  equation \eqref{u-harm-int}, with an arbitrary positive  harmonic function $h$ in place of $Pf$. It gives sufficient and matching necessary conditions for the existence of solutions whose pointwise estimates are provided in Theorem \ref{mainu-harm}.  The primary difference in this case is that  $\mu_h$ is not necessarily zero on the set of irregular points of $\partial\Omega$.   Hence we need to consider 
\begin{equation}\label{phi-def}
\begin{aligned}
\varphi(z) & =  \liminf_{x \rightarrow z, \, x\in\Omega}  \max(1, G(x, x_0)), \\ 
\psi(z) & = \limsup_{x \rightarrow z, \, x\in\Omega}  \, \max(1, G(x, x_0)) , 
  \end{aligned}
 \end{equation}
for $z \in \partial \Omega$. 
Note that $\varphi = \psi =1$ at regular boundary points.  The following result is a generalization of Lemma \ref{conv-lemma}, which allows us to control the behavior of $\varphi $ and $\psi$ at irregular points in a uniform domain.  	

\begin{Lemma}\label{conv-lemma-alt}
Suppose $\Omega \subset \R^n$ is a bounded uniform domain, for $n \geq 2$.  
Suppose $\mu$ is a finite Borel measure  with compact support in $\Omega$.    Let $z \in \partial \Omega$. 
Then 
\begin{equation}\label{min-thin3a}
1\le \varphi(z) \le \psi(z) \le \kappa \,
\varphi(z) \le \kappa \, C_1 , \qquad z \in \Omega , 
 \end{equation}
for constants $\kappa$ and $C_1$, where $\kappa$ depends only on $\Omega$ and $C_1$   depends only on ${\rm dist} (x_0, \partial \Omega)$.  Moreover,  for all  $z \in \partial\Omega$, 
\begin{equation} \label{min-thin3}
\begin{aligned}
 \limsup_{x \rightarrow z, \, x\in\Omega} \frac{G \mu (x)}{m(x)}  = \psi(z) M^*\mu(z)
 &  \leq  \kappa \varphi(z) M^*\mu(z) \\ &=  \kappa \liminf_{x \rightarrow z, \, x\in\Omega}  \frac{G \mu (x)}{m(x)} .  
 \end{aligned}
\end{equation}
\end{Lemma}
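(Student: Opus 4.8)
The plan is to separate the elementary comparison \eqref{min-thin3a} between $\varphi$ and $\psi$ from the convergence statement \eqref{min-thin3}, which is the genuine content. For \eqref{min-thin3a}: the inequality $\varphi(z)\ge 1$ is immediate since $\max(1,G(x,x_0))\ge 1$, and $\varphi(z)\le\psi(z)$ is trivial. The upper bound $\psi(z)\le\kappa\varphi(z)$ is exactly where we use \eqref{liminf-limsup}; indeed, applying \eqref{liminf-limsup} with $y=x_0$ gives $\limsup_{x\to z}\frac{G(x,x_0)}{m(x)}\le\kappa\,\liminf_{x\to z}\frac{G(x,x_0)}{m(x)}$, and since $m(x)=\min(1,G(x,x_0))$ we have $\frac{G(x,x_0)}{m(x)}=\max(1,G(x,x_0))$, which is precisely $\psi(z)\le\kappa\varphi(z)$. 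The final bound $\varphi(z)\le C_1$ comes from Harnack's inequality in a ball around $x_0$: on $B(x_0,r)$ with $r\le\frac12\mathrm{dist}(x_0,\partial\Omega)$ one has $G(x,x_0)\le C_1$, so for $x$ near $z\in\partial\Omega$ (hence outside any fixed ball) we get $\max(1,G(x,x_0))\le\max(1,C_1)$; more carefully, since $G(x,x_0)\to 0$ if $z$ is regular and is bounded near $\partial\Omega$ in general, the relevant constant depends only on $\mathrm{dist}(x_0,\partial\Omega)$.

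For \eqref{min-thin3}, I would mimic the proof of Lemma \ref{conv-lemma} but keep track of the factor $\frac{G(x,x_0)}{m(x)}=\max(1,G(x,x_0))$ rather than discarding it. Write
\[
\frac{G\mu(x)}{m(x)}=\frac{G\mu(x)}{G(x,x_0)}\cdot\frac{G(x,x_0)}{m(x)}.
\]
By Lemma \ref{conv-lemma}, equation \eqref{min-thin2}, the first factor converges to $M^*\mu(z)$ as $x\to z$, $x\in\Omega$ (this uses $M^*\mu(z)<\infty$, which holds for a.e.\ $z$ and in particular wherever the statement is nontrivial; strictly it holds for all $z$ by dominated convergence since $\mu$ has compact support and $M(\cdot,z)$ is bounded on compacta uniformly for $x$ near $z$). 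Since this factor has an honest limit, the limsup (resp.\ liminf) of the product equals $M^*\mu(z)$ times the limsup (resp.\ liminf) of the second factor, which by definition is $\psi(z)$ (resp.\ $\varphi(z)$). This gives
\[
\limsup_{x\to z}\frac{G\mu(x)}{m(x)}=\psi(z)\,M^*\mu(z),\qquad
\liminf_{x\to z}\frac{G\mu(x)}{m(x)}=\varphi(z)\,M^*\mu(z),
\]
and the middle inequality $\psi(z)M^*\mu(z)\le\kappa\varphi(z)M^*\mu(z)$ is then just \eqref{min-thin3a} multiplied through by the nonnegative quantity $M^*\mu(z)$.

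The main subtlety — and the step I would be most careful about — is the interchange of limits when multiplying the two factors: one must be sure the first factor $G\mu(x)/G(x,x_0)$ genuinely converges (not merely has a limsup), so that $\limsup(ab)=(\lim a)(\limsup b)$ is legitimate. This is guaranteed by Lemma \ref{conv-lemma}'s \eqref{min-thin2}, which holds at \emph{every} $z\in\partial\Omega$ because $\mu$ is finite with compact support, so the dominated-convergence argument there applies unconditionally. A secondary point is the degenerate case $M^*\mu(z)=0$, where both sides of \eqref{min-thin3} vanish and the inequality is trivial; and the case $M^*\mu(z)=\infty$ does not occur under the compact-support hypothesis. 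Everything else is bookkeeping, and the compact-support restriction on $\mu$ can later be removed, as in the proof of Theorem \ref{gaugecrit}, by exhausting $\Omega$ and applying monotone convergence.
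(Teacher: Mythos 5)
Your proposal is correct and follows essentially the same route as the paper: the trivial inequalities plus \eqref{liminf-limsup} with $y=x_0$ and the identity $G(x,x_0)/m(x)=\max(1,G(x,x_0))$ give \eqref{min-thin3a}, and \eqref{min-thin3} comes from factoring $\frac{G\mu(x)}{m(x)}=\frac{G\mu(x)}{G(x,x_0)}\cdot\frac{G(x,x_0)}{m(x)}$ and using the genuine limit in \eqref{min-thin2} for the first factor. The only blemish is your justification of the bound by $C_1$: Harnack ``in a ball around $x_0$'' is not the right tool (and $G(\cdot,x_0)$ is unbounded near its pole); the paper simply uses $G(x,x_0)\le c(n)|x-x_0|^{2-n}$ for $n\ge3$, noting that $x\to z\in\partial\Omega$ forces $|x-x_0|$ to be bounded below by (essentially) $\mathrm{dist}(x_0,\partial\Omega)$, which is why $C_1$ depends only on that distance.
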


\begin{proof}  The inequalities $1 \leq \varphi (z) \leq \psi(z)$ are trivial.  The inequality $\psi (z) \leq \kappa \varphi(z)$ follows from inequality (\ref{liminf-limsup}) with $y=x_0$ and the observation that $\max(1, G(x, x_0)) = G(x, x_0)/m(x)$. Since $x\rightarrow z$, 
we may assume that $|x-x_{0} |  \geq c_1$ for any  $c_1 < \text{dist} \, (x_0, \partial \Omega)$, for $x$ close enough to $z$. Then 
 \[
G(x, x_0) \le c(n) \, |x-x_0|^{2-n} \le c(n) \, c_1^{2-n} , 
\] 
where we suppose again that $n\ge 3$ (the case $n=2$ is treated in a similar way).  Hence,  
\[
\psi (z) \le C_1=\max \left(1, c(n) \, [\text{dist} \, (x_0, \partial \Omega)]^{2-n}\right) , \quad 
\text{for all} \, \, z\in \partial \Omega ,
\] 
and consequently \eqref{min-thin3a} holds. 

 To prove \eqref{min-thin3}, 
  note that by  \eqref{min-thin2},  
	\[ \limsup_{x \rightarrow z, \, x\in\Omega} \frac{G \mu (x)}{m(x)} =  \limsup_{x \rightarrow z, \, x\in\Omega} \frac{G(x, x_0)}{m(x)} \, \lim_{x \rightarrow z, \, x\in\Omega} \frac{G \mu (x)}{G(x, x_0)} =  \psi(z) M^* \mu (z)  \]
 and 
\[ \liminf_{x \rightarrow z, \, x\in\Omega} \frac{G \mu (x)}{m(x)} =
    \liminf_{x \rightarrow z, \, x\in\Omega} \frac{G(x, x_0)}{m(x)} \, \lim_{x \rightarrow z, \, x\in\Omega} \frac{G \mu (x)}{G(x, x_0)}  = \varphi(z) M^* \mu (z)  . \]
Hence, \eqref{min-thin3} is immediate from \eqref{min-thin3a}. 
\end{proof}

 \begin{Thm} \label{u_h-exist} Suppose $ \Omega \subset \R^n$ is a bounded uniform domain, $\omega$ is a locally finite Borel measure on $\Omega$, and $h$ is a positive 
 harmonic function in $\Omega$.  Let $x_0 \in \Omega$ be the reference point in the definition of Martin's kernel.   Let $m(x) = \min (1, G(x,x_0))$, and let $\mu_h$ be the Martin's representing  measure for $h$.  

\vspace{0.1in}

(A)  There exists $C>0$ ($C$ depending only on $\Omega$ and $\Vert T \Vert$) such that if $\Vert T \Vert <1$ (equivalently, (\ref{equivnormTless1}) holds with $\beta <1$) and 
\begin{equation} \label{martin-suff}
 \int_{\partial \Omega} e^{C \, \varphi (z)\, M^* (m \omega)(z)} \,   d\mu_h (z)< \infty , 
\end{equation}
then $u_h = \sum_{j=0}^{\infty} T^j h \in L^1_{loc} (\Omega, dx)$ is a positive solution to (\ref{u-harm-int}).   

\vspace{0.1in}

(B) If $u \in L^1_{loc} (\Omega, dx) $ is a positive solution of \eqref{u-harm-int}, then $\Vert T \Vert \leq 1$ and  
\begin{equation} \label{martin-nec}
 \int_{\partial \Omega} e^{\psi(z) \, M^* (m \omega)(z)} \,  d\mu_h(z) < \infty . 
\end{equation}
\end{Thm}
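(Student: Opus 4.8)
The plan is to run the argument of Theorem~\ref{gaugecrit} almost verbatim, with two substitutions: the harmonic function $Pf$ is replaced by $h$ and the harmonic measure $dH^{x_0}$ by the Martin representing measure $d\mu_h$, so that Martin's representation \eqref{martin-rep}, i.e. $h(y)=\int_{\partial\Omega}M(y,z)\,d\mu_h(z)$, plays the role of \eqref{pf-rep-martin}; and Lemma~\ref{conv-lemma} is replaced by Lemma~\ref{conv-lemma-alt}, which is precisely what lets us pass to the boundary at \emph{irregular} points, where $\mu_h$ (unlike harmonic measure) need not vanish. The only genuinely new bookkeeping is that the $\limsup$ as $x\to z$ in the potential-theoretic estimates now produces the factor $\psi(z)$ rather than $1$; this is absorbed using $1\le\varphi(z)\le\psi(z)\le\kappa\,\varphi(z)\le\kappa\,C_1$ from \eqref{min-thin3a}. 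As in the proof of Theorem~\ref{gaugecrit}, the passage from compactly supported $\omega$ to a general locally finite $\omega$ is handled by an exhaustion $\Omega=\bigcup_k\Omega_k$, $\omega_k=\omega|_{\Omega_k}$, together with the monotone convergence theorem.

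\textit{Part (A).} Assume $\|T\|<1$ and \eqref{martin-suff}; we must show $u_h=\sum_{j\ge0}T^jh\in L^1_{loc}(\Omega,dx)$. Summing the geometric series as after \eqref{ufsolndef} gives, in $[0,+\infty]$,
\[
u_h(x)=h(x)+\int_\Omega\mathcal{G}(x,y)\,h(y)\,d\omega(y),
\]
where $\mathcal{G}$ is the Schr\"odinger Green kernel \eqref{defGreenSchr}. Fix a compact $K\subset\Omega$; integrating over $K$ and using Tonelli, the symmetry of $\mathcal{G}$, and $u_K=\mathcal{G}\chi_K$ (see \eqref{defuK}) gives
\[
\int_Ku_h\,dx=\int_Kh\,dx+\int_\Omega u_K(y)\,h(y)\,d\omega(y).
\]
Since $h$ is harmonic, hence locally bounded, $\int_Kh\,dx<\infty$, so it suffices to prove $u_Kh\in L^1(\Omega,\omega)$. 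By \eqref{martin-rep} and Tonelli, $\int_\Omega u_Kh\,d\omega=\int_{\partial\Omega}\bigl(\int_\Omega M(y,z)u_K(y)\,d\omega(y)\bigr)\,d\mu_h(z)$, so in view of \eqref{martin-suff} it is enough to establish the pointwise bound
\[
\int_\Omega M(y,z)\,u_K(y)\,d\omega(y)\le C_K\,e^{C\,\varphi(z)\,M^*(m\omega)(z)},\qquad z\in\partial\Omega,
\]
for some $C_K>0$ and a $C$ depending only on $\Omega$ and $\|T\|$. We may assume $m\in L^2(\Omega,\omega)$, for otherwise $M^*(m\omega)\equiv+\infty$ by Lemma~\ref{low-M-est} and, since $h>0$ forces $\mu_h\ne0$, condition \eqref{martin-suff} would fail. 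If $\omega$ is compactly supported then $u_K\in L^2(\Omega,\omega)$ (exactly as in the proof of Theorem~\ref{gaugecrit}(A)), so $u_K\omega$ is a finite measure with compact support; by \eqref{uKexpest} and $m(\cdot)\le G(\cdot,x_0)$ one gets, as in \eqref{GuKomegaest}, $G(u_K\omega)(x)/m(x)\le C_K\,e^{C\,G(m\omega)(x)/m(x)}$ on $\Omega$. Taking $\limsup_{x\to z}$ of both sides: by \eqref{min-thin3} the left side has limit superior $\psi(z)\,M^*(u_K\omega)(z)$, and by continuity of the exponential and \eqref{min-thin3} applied to $m\omega$ the right side tends to $C_K\,e^{C\,\psi(z)\,M^*(m\omega)(z)}$; since $\psi(z)\ge1$ and $\psi(z)\le\kappa\,\varphi(z)$, the displayed bound follows with $C$ enlarged by $\kappa$. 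The general case follows from the exhaustion and monotone convergence exactly as in the proof of Theorem~\ref{gaugecrit}(A). Integrating against $d\mu_h$ and using \eqref{martin-suff} then shows $u_Kh\in L^1(\Omega,\omega)$, hence $\int_Ku_h\,dx<\infty$ for every compact $K$, i.e. $u_h\in L^1_{loc}(\Omega,dx)$; thus $G(u_h\omega)=u_h-h\not\equiv+\infty$, so $u_h=h+G(u_h\omega)\ge h>0$ is a positive superharmonic solution of \eqref{u-harm-int}.

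\textit{Part (B).} Let $u$ be a positive solution of \eqref{u-harm-int}. Iterating $u=h+Tu$ gives $u\ge\sum_{j\ge0}T^jh=u_h$, so $u_h\in L^1_{loc}(\Omega,dx)$; also $u\ge h>0$ and $Tu\le u$ with $u<\infty$ $d\omega$-a.e., so $\|T\|\le1$ by Schur's lemma, just as in Theorem~\ref{gaugecrit}(B). Fix a compact $K$ with $|K|>0$. By Lemma~\ref{estGchiK} and Theorem~\ref{FNVTheorem}(B)---in fact with exponent constant $1$ via \cite{GV2}, Theorem~1.2 (with $q=1$, $h=m$, $\mathfrak{b}=1$), since $m$ is superharmonic and the Maria--Frostman principle yields \eqref{weak-dom} with $\mathfrak{b}=1$---we have $u_K(x)\ge c_K\,m(x)\,e^{Tm(x)/m(x)}$ on $\Omega$; combined with $u_K=G(u_K\omega)+G\chi_K$ and \eqref{GchiKest} this gives
\[
e^{Tm(x)/m(x)}\le c_K^{-1}\,\frac{G(u_K\omega)(x)}{m(x)}+C_K\,c_K^{-1},\qquad x\in\Omega.
\]
Taking $\limsup_{x\to z}$---first for compactly supported $\omega$, so that \eqref{min-thin3} applies to $\mu=m\omega$ and to $\mu=u_K\omega$, and then exhausting---and using $\psi(z)\le\kappa\,C_1$ from \eqref{min-thin3a}, we obtain for every $z\in\partial\Omega$
\[
e^{\psi(z)\,M^*(m\omega)(z)}\le c_K^{-1}\,\kappa\,C_1\int_\Omega M(y,z)\,u_K(y)\,d\omega(y)+C_K\,c_K^{-1}.
\]
Integrating over $\partial\Omega$ against $d\mu_h$, applying Tonelli, and using $\int_{\partial\Omega}M(y,z)\,d\mu_h(z)=h(y)$, we bound the right-hand side by $c_K^{-1}\kappa C_1\int_\Omega u_Kh\,d\omega+C_K c_K^{-1}\mu_h(\partial\Omega)$. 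Here $\int_\Omega u_Kh\,d\omega\le\int_Ku_h\,dx\le\int_Ku\,dx<\infty$ (since $u_h\le u\in L^1_{loc}$) and $\mu_h$ is finite, so \eqref{martin-nec} follows.

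\textit{Main obstacle.} No new analytic input is needed: the substance is already packaged in Theorem~\ref{FNVTheorem}, Lemmas~\ref{estGchiK}, \ref{low-M-est}, \ref{conv-lemma-alt}, plus the exhaustion device. The delicate point---and the reason Lemma~\ref{conv-lemma-alt} was isolated---is the irregular boundary set, which $\mu_h$ may charge; one must track the comparison $\varphi(z)\le\psi(z)\le\kappa\,\varphi(z)\le\kappa\,C_1$ carefully so that the $\limsup$ (which always produces $\psi(z)$) lands on the correct side: $\varphi$ in the sufficient condition \eqref{martin-suff} and $\psi$ in the necessary condition \eqref{martin-nec}, and so that in part (B) the surviving factor $\psi(z)$ multiplying $\int_\Omega M(y,z)u_K\,d\omega$ can be swallowed by the uniform bound $\kappa\,C_1$. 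A minor point to verify along the way is that in the compactly supported reductions the measures $m\omega_k$ and $u_K^{(k)}\omega_k$ are finite with compact support, so that Lemma~\ref{conv-lemma-alt} genuinely applies.
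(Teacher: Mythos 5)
Your proposal is correct and follows essentially the same route as the paper: reduce to showing $u_K h \in L^1(\Omega,\omega)$ via $\int_K u_h\,dx = \int_K h\,dx + \int_\Omega u_K h\,d\omega$, obtain the boundary estimates through Lemma \ref{conv-lemma-alt}, Lemma \ref{estGchiK} and Theorem \ref{FNVTheorem} (with $c=1$ via \cite{GV2}), and handle general $\omega$ by exhaustion. The only deviation is in part (A), where the paper extracts a sequence realizing the $\liminf$ of $G(m\omega)/m$ to get the factor $\varphi(z)$ directly, while you take $\limsup$ of both sides and absorb $\psi(z)\le\kappa\,\varphi(z)$, which merely enlarges the admissible constant $C$ by $\kappa$ and is permissible since $C$ may depend on $\Omega$.
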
 

\begin{proof}
The proof follows the lines of the proof of Theorem \ref{gaugecrit}, so we only sketch the differences. Let $K \subseteq \Omega$ be compact with $|K|>0$. Replacing $Pf$ with $h$, we obtain 
\begin{equation} \label{martin2-alt}
 \int_K u_h (x) \, dx = \int_K h (x) \, dx + \int_{\Omega} u_K (y) h(y) \, d\omega (y)\end{equation}
instead of \eqref{intKuf}.  Using Martin's representation \eqref{martin-rep} instead of 
\eqref{pf-rep-martin}, 
\begin{equation} \label{uKhdomega}
 \int_{\Omega} u_K (y) h(y) \, d\omega (y)  =  \int_{\partial \Omega} \int_{\Omega} M(y,z) u_K (y) \, d \omega (y) \, d \mu_h (z) . 
\end{equation}

For part (A), it suffices to show that $u_K h \in L^1 (\Omega, d\omega)$.  
We claim that 
\begin{equation} \label{Mu0est-alt}
\int_{\Omega} M(y,z) u_K (y) \, d \omega (y) \leq C_K \, e^{C \, \varphi(z) \, M^* (m\omega) (z)}, \quad z \in \partial \Omega,
\end{equation}
which replaces \eqref{Mu0est}, and completes the proof of (A).  To prove \eqref{Mu0est-alt}, we can assume $\omega$ is compactly supported by the exhaustion process above. Choose a sequence of points $x_j$ in $\Omega$ converging to $z$, such that 
\[ \lim_{j \rightarrow \infty} \frac{G(m \omega)(x_j)}{m(x_j)} = \lim_{w \rightarrow z} \inf_{w \in \Omega} \frac{G(m\omega)(w)}{m(w)}.   \]
Then by \eqref{min-thin} with $d \mu= u_K d \omega$, \eqref{GuKomegaest}, and \eqref{min-thin3} with $\mu = m \omega$,
\begin{align*}
  \int_{\Omega} M(y,z) u_K (y) \, d \omega (y) & = \lim_{j \rightarrow \infty} \frac{G(u_K\omega)(x_j)}{G(x_j, x_0)} \\
	& \leq \lim \inf_{j \rightarrow \infty} C_K e^{C G(m\omega)(x_j)/m(x_j)} \\
	& = C_K e^{C \varphi(z) M^* (m\omega) (z) } . 
	\end{align*}

For part (B), equation \eqref{u-harm-int} and Schur's Lemma show that $\Vert T \Vert \leq 1$, as in Theorem \ref{gaugecrit}.  If $u \in L^1_{loc} (\Omega, dx)$, then the  minimal solution $u_h$ also belongs to $L^1_{loc} (\Omega, dx)$ (see the remarks before Definition \ref{qmkernel}).   We claim that the following analogue of \eqref{M-claim} holds:
\begin{equation}  \label{M-claim-alt}
e^{\psi(z) M^* (m \omega) (z)}  \leq  c_K^{-1} \kappa C_1 \, \int_{\Omega} M(y,z) u_K (y) \, d \omega (y) + C_K c_K^{-1},   
\end{equation}
for all $z \in \partial \Omega$, where $C_1\ge 1$ is the constant in \eqref{min-thin3a}, which depends only on $x_0$ and $\Omega$.  Assuming this claim, then \eqref{uKhdomega} implies \eqref{martin-nec} since  $u_K h \in L^1 (\Omega, d \omega)$ by \eqref{martin2-alt}.  To prove \eqref{M-claim-alt}, let $x_j$ be a sequence of points such that 
\[ \lim_{j \rightarrow \infty} \frac{G(m \omega)(x_j)}{m(x_j)} = \lim_{w \rightarrow z} \sup_{w \in \Omega} \frac{G(m\omega)(w)}{m(w)}.   \]
By \eqref{min-thin3} with $d\mu = m\, d\omega$, and recalling that $G(m \omega) = Tm$,
\begin{align*}
  e^{\psi(z) M^* (m \omega) (z)}  &  \leq  \lim_{j \rightarrow \infty} e^{Tm(x_j)/m(x_j)}  \\
	& \leq \lim \sup_{j \rightarrow \infty} C_K^{-1} \frac{G(u_K \omega) (x_j)}{m(x_j)} + C_K c_K^{-1}, 
	\end{align*}
by \eqref{Tm-m} with $c=1$.  By \eqref{min-thin3} with $\mu= u_k \omega$, 
\[ \lim \sup_{j \rightarrow \infty} \frac{G(u_K \omega) (x_j)}{m(x_j)}= \psi (z) M^* (u_k \omega) (z) \leq \kappa C_1 M^* (u_k \omega) (z) ,  \]
which establishes \eqref{M-claim-alt}.
\end{proof}

  \bigskip

 \section{Nonlinear elliptic equations of Riccati type}\label{riccati}

In  this section we treat equation \eqref{nonlineareqn-1}. 
The definition of solutions of \eqref{nonlineareqn-1} is consistent with  our approach 
in the previous sections. 

\begin{Def}\label{defveryweakriccati}   A nonnegative function $v \in W^{1,2}_{loc} (\Omega) $  is a solution of (\ref{nonlineareqn-1}) if 
$v$ is a weak solution in $\Omega$, i.e., 
\begin{equation}\label{weakriccati}
     \int_{\Omega}  \nabla v \cdot \nabla  h \, dx = \int_{\Omega}  |\nabla v|^2 h \, dx + \int_{\Omega} h \,  d\omega, \,\,\, \mbox{for all} \,\,\, h \in C^\infty_0 (\Omega),
\end{equation}
and $v$ has a superharmonic representative (denoted also by $v$) in $\Omega$ whose  
greatest harmonic minorant is the zero function.  
\end{Def}

Since  $v \in W^{1,2}_{loc} (\Omega) $, it is easy to see that \eqref{weakriccati} 
is equivalent to 
\begin{equation}\label{ric-eq-1}
- \triangle v = |\nabla v|^2  + \omega \quad \mbox{in} \, \, \, \, D^{\, \prime}(\Omega),  
\end{equation}
i.e.,  $v$ is 
a distributional solution in $\Omega$. In other words, by the Riesz decomposition theorem (\cite{AG}, Sec. 4.4), 
  $|\nabla v|^2 + \omega$ is the Riesz measure associated with 
 $-\triangle v$, and $v$ satisfies the integral equation 
\begin{equation} \label{integralformmeasure}
v = G (|\nabla v|^2 + \omega) \,\, \hbox{in} \,\, \Omega.
\end{equation}
In bounded Lipschitz domains, \eqref{integralformmeasure} is equivalent to $v$ being a very weak solution of \eqref{nonlineareqn-1} in the sense of \cite{MR}. 

Via the relation $v=\log u$, solutions $v$ of \eqref{nonlineareqn-1} correspond formally to solutions $u$ of   
\eqref{ufeqn} with $f=1$, i.e.,
\begin{equation}\label{dirichlet} 
\left\{ \begin{aligned}
-\triangle u & = \omega \, u, \, \, & u > 0 \quad  &\mbox{in} \, \,  \Omega, \\
u & = 1 \, \, &\mbox{on} \, \,  \partial \Omega.
\end{aligned}
\right.  
\end{equation} 
The minimal solution $u_1$ to \eqref{dirichlet} (the gauge) is given by \eqref{gauge-def}.  

Earlier results on  \eqref{nonlineareqn-1} 
 were obtained in \cite{HMV}, where the problem was posed of   
 finding precise conditions on the boundary behavior of $\omega$  that ensure the existence of solutions. 
 
The precise relation between solutions to \eqref{dirichlet} and \eqref{nonlineareqn-1} is complicated, as discovered by Ferone and Murat (see \cite{FM1}--\cite{FM3} or  Remark 4.2 in \cite{FV2}). In the special case   of smooth domains and absolutely continuous $\omega$, the problem was studied by the authors in  \cite{FV2}, where the condition of the exponential integrability of the balayage of $m \, \omega$ appeared for the first time. In that setup, it was shown that if $u_1$ is the minimal   solution of (\ref{dirichlet}), then $v = \log u_1$ is a solution of (\ref{nonlineareqn-1}).  However, if $v$ is a  solution to (\ref{nonlineareqn-1}) then $u=e^v$ is in general only a supersolution to (\ref{dirichlet}).
  
  In Theorem \ref{riccatithm}, we treat general measures $\omega$ and uniform domains $\Omega$ based on the results of the previous sections. We take this opportunity to give further details on some points in the arguments presented in \cite{FV2}, Sec. 4. We also improve the constant in the exponent of the necessary condition (exponential integrability of the balayage).

\vspace{0.2in}

\begin{proofof} Theorem \ref{riccatithm}.  First suppose that $\Vert T \Vert<1$ and (\ref{martincritsuff-g}) holds with sufficiently  large $C>0$.   By Corollary~\ref{cor}, the Schr\"odinger equation (\ref{dirichlet}) has a positive solution $ u= 1 + \mathcal{G} \omega$.  (This solution was called $u_1$ in the statement of Corollary~\ref{cor}.) Then $u \in L^1_{loc} (\Omega, d\omega)$ and $u$ satisfies the integral equation $u = 1 + G(\omega u)$.  Therefore $u:  \Omega \to [1, +\infty]$  is defined everywhere as a positive superharmonic function in $\Omega$ and hence  is quasi-continuous by the known properties of superhamonic functions. 

In particular, the infinity set $E=\{x\in \Omega: \, u(x)=+\infty\}$ has zero capacity, $\text{cap}(E)=0$, and 
 $u \in  W^{1,p}_{loc}(\Omega)$ when $p< \frac{n}{n-1}$. In fact, $u \in W^{1,2}_{loc}(\Omega)$ as shown in \cite{JMV},  Theorem 6.2, 
but the proof of this stronger property is more involved, and it will not be used below. 

Define $d \mu = -\triangle u = \omega \, u$, where a solution $u \in L^1_{loc}(\Omega, \omega)$ 
to \eqref{dirichlet} 
is understood as in \S \ref{sec2} above. Notice that $u = \frac{d\mu}{d \omega}$  
is the Radon--Nikodym derivative defined $d\omega$-a.e. 
Let $v=\log u$. Then $0\leq  v < +\infty$ $d\omega$-a.e., $v$ is superharmonic in $\Omega$ by Jensen's inequality, and 
 $v  \in W^{1,2}_{loc}(\Omega)$ (see \cite{HKM}, Theorem 7.48; \cite{MZ}, Sec. 2.2). 
 
 We claim that  
  \eqref{weakriccati} holds. We will apply the integration by parts formula 
 \begin{equation}\label{by-parts}
\int_\Omega g \, d \rho = - \langle g, \triangle r \rangle=  \int_\Omega \nabla g \cdot \nabla r \, dx,
\end{equation}
where $g\in  W^{1,2}(\Omega)$ is compactly supported and quasi-continuous in $\Omega$, and $\rho = -\triangle r$ where $r \in W^{1,2}_{loc}(\Omega)$ is superharmonic (see, e.g.,  \cite{MZ}, Theorem 2.39 and Lemma 2.33).  This proof would simplify if we could apply (\ref{by-parts}) with $g = \frac h u, \rho = \mu$, and $r=u$, for $h \in C^{\infty}_0 (\Omega)$.  However, we do not 
use the property $u \in W^{1,2}_{loc}(\Omega)$, so we need an approximation argument.  For $k \in \N$, let 
\[ u_k = \min (u, \, e^k), \quad v_k=\min (v, \, k), \quad \mbox{and} \quad  \mu_k = - \triangle u_k.\]
Clearly $u_k$ and $v_k$ are superharmonic, hence $\mu_k $ is a positive measure.  Moreover, $u_k$ and $v_k$ belong to $W^{1,2}_{loc}(\Omega)\bigcap L^\infty(\Omega)$ (see \cite{HKM}, Corollary 7.20). 

Let $h\in C^\infty_0(\Omega)$. 
We invoke (\ref{by-parts}) with $g=  \frac {h}{u_k}, \rho=\mu_k$, and 
$r=u_k$. Note that $u_k\ge 1$, $g $  is compactly supported since $h$ is, and $ g \in W^{1,2} (\Omega)$ since $u_k\in W^{1,2}_{loc} (\Omega)$ and $h \in W^{1,\infty}(\Omega)$ is compactly supported.  Then by (\ref{by-parts}), we have 
\begin{equation}\label{approx-v_k}
\begin{aligned}
\int_\Omega \frac {h}{u_k} \, d \mu_k & =  \int_{\Omega} \nabla \left(\frac {h}{u_k}\right) \cdot \nabla u_k \, dx\\ 
& =  \int_\Omega \frac {\nabla h}{u_k}  \cdot \nabla u_k \, dx - \int_\Omega \frac {|\nabla u_k|^2}{u_k^2} 
h \, dx \\ 
& =  \int_\Omega \nabla h  \cdot \nabla v_k \, dx - \int_\Omega |\nabla v_k|^2 \,  h \, dx.  
\end{aligned}
\end{equation}

As mentioned above,  $v \in W^{1,2}_{loc} (\Omega)$, and consequently 
$\nabla v_k = \nabla v$ a.e. on $\{ v<k\}$, and $\nabla v_k = 0$ a.e. on $\{ v\ge k\}$ (see \cite{MZ}, Corollary 1.43). Hence, 
\begin{equation*}
\begin{aligned}
\lim_{k \rightarrow \infty} \int_\Omega \nabla h  \cdot \nabla v_k \, dx  &= \int_\Omega \nabla h  \cdot \nabla v \, dx , \\  
\lim_{k \rightarrow \infty} \int_\Omega |\nabla v_k|^2 \,  h \, dx & = \int_\Omega |\nabla v|^2 \,  h \, dx
\end{aligned}
\end{equation*}
by the dominated convergence theorem. 

Since $u$ is superharmonic, $u$ is lower semi-continuous, so the set $\{ x \in \Omega: u(x) > e^k \} \equiv \{u>e^k\}$ is open, and the measure $\mu_k = - \triangle u_k$ is supported on the closed 
set $\{u \le e^k\}$ where $u=u_k$.  Hence  $u=u_k$ $d \mu_k$-a.e., and 
\[
\int_\Omega \frac {h}{u_k} \, d \mu_k=\int_\Omega \frac {h}{u} \, d \mu_k.
\]

We next show that, for any continuous function $h$ with compact support in $\Omega$, 
\begin{equation}\label{claim}
\lim_{k\to \infty} \int_{\Omega} \frac{h}{ u} \, d \mu_k =  \int_\Omega \frac h u \, d \mu. 
\end{equation}
  Without loss of generality 
we assume here that $h\ge 0$. Otherwise we apply the argument below to $h_{+}$ and 
$h_{-}$ separately.

Notice that $u_k \uparrow u$, and consequently $\mu_k \to \mu$ weakly in $\Omega$,
by the weak continuity property (see, for instance, \cite{TW} 
in a rather more general setting), i.e., 
\begin{equation*}
\lim_{k\to \infty} \int_\Omega \phi \, d \mu_k = \int_\Omega \phi \, d \mu
\end{equation*}
for all continuous functions $\phi$ with compact support in $\Omega$. It follows (see \cite{Lan}, Lemma 0.1) that 
\begin{equation}\label{lsc}
\liminf_{k\to \infty} \int_\Omega \phi \, d \mu_k \ge  \int_\Omega \phi \, d \mu
\end{equation}
for all lower semicontinuous  functions $\phi$ with compact support in $\Omega$. The function $\frac{h}{ u}$ is obviously upper semicontinuous   with compact support, so  by \eqref{lsc} applied to $-\frac{h}{ u}$, we deduce 
\begin{equation}\label{upper}
\limsup_{k\to \infty} \int_{\Omega} \frac{h}{ u} \, d \mu_k \le  \int_\Omega \frac h u \, d \mu. 
\end{equation}

To prove an estimate in the opposite direction, we claim that $ \mu_k \ge  \mu$ on the closed 
set $F_k=\{ x \in \Omega: \, u(x)\le e^k\}$. 
It is enough to prove that 
\begin{equation}\label{claim-u_k}
\mu_k (K) \ge \mu(K), \quad \text{for every compact set} \, \, K \subset F_k.
\end{equation}

We verify \eqref{claim-u_k} by using another approximation argument based on a version of Lusin's theorem for certain Green potentials (the so-called semibounded 
potentials, see \cite{Fug}, Sec. 2.6). Notice that $u = G \mu+1$, 
where $d \mu =u \, d \omega$, and $u<\infty$ $d \omega$-a.e., as discussed in  
 \S \ref{sec2}. Moreover, 
$u<\infty$ on $\Omega\setminus\!E$, i.e., outside  
the infinity set $E$, which is obviously a Borel set such that $\mu(E)=0$ 
since $\omega(E)=0$. 

This is also a consequence of the fact that $E$ is a set of zero capacity, 
and $\omega(E)\le \text{cap}(E)$, which follows immediately from \eqref{equivnormTless1}.  In fact, the condition $\mu(E)=0$ is equivalent to absolute 
continuity of $\mu$ with respect to capacity, i.e., 
$\text{cap}(K)=0 \Longrightarrow \mu(K)=0$ for all compact sets $K\subset \Omega$.

Consequently (see \cite{Fug}, Theorem 2.6; \cite{Hel}, Theorem 4.6.3), there exists an increasing  sequence of compactly supported measures $\mu^j$ such that  $u^j=G \mu^j +1\in C(\Omega)$, so  that $\mu^j(K) \uparrow \mu(K)$, for every compact set $K\subset\Omega$, and $G \mu^j\uparrow G \mu$ on $\Omega$, as $j\to \infty$ . It follows that $u^j\uparrow u$, 
and so $\min(u^j, e^k) \uparrow \min(u, e^k)=u_k$  as $j\to \infty$, which yields that the corresponding Riesz measures 
$\mu_k^j$ associated with the superharmonic functions $\min(u^j, e^k)$ have the property 
$\mu_k^j \to \mu_k$ weakly in $\Omega$ as $j\to \infty$. 

Without loss of generality we may assume that actually $u^j(x)<u(x)$ for all $x\in \Omega$. Otherwise 
we replace $u^j $  with $\epsilon_j \, u^j$, where $\epsilon_j\uparrow 1$ is a strictly increasing 
sequence of positive numbers. Then all the properties of $u^j$ remain true. 
 
Obviously, $F_k\subset G^j_k$ where $G^j_k=\{ x \in \Omega: \, u^j(x)< e^k\}$ is an open set 
for every $j, k \in \N$, since $u^j \in C(\Omega)$. Clearly, $u^j=\min(u^j, e^k)$ on $G^j_k$, and so 
$\mu^j$ coincides with $\mu_k^j$ on  $G^j_k$. In particular, $\mu_k^j(K)=\mu^j(K)$ 
for every compact set 
$K \subseteq F_k \subset G_k^j$. 

Since $\mu_k^j \to \mu_k$ weakly, it follows 
 by \eqref{lsc} applied to the lower semicontinuous function 
 $-\chi_K$ that 
 \[
\limsup_{j\to \infty} \mu_k^j(K) \le \mu_k(K). 
\]
 Hence,
 \[
 \mu(K)=\lim_{j \to \infty} \mu^j(K) =\limsup_{j\to \infty} \mu_k^j(K) \le \mu_k(K),  
 \]
 which proves \eqref{claim-u_k}. Consequently, 
 \begin{equation}\label{usc-appl}
 \begin{aligned}
 \liminf_{k \to \infty}  \int_{\Omega} \frac{h}{ u} \, d \mu_k  & \ge \liminf_{k \to \infty} \int_{F_k} \frac{h}{ u} \, d \mu_k 
\\ & \ge \liminf_{k \to \infty} \int_{F_k} \frac{h}{ u} \, d \mu   =  \int_{\Omega\setminus E} \frac{h}{ u} \, d \mu, 
 \end{aligned}
\end{equation}
where $E$ is the infinity set of $u$. As mentioned above,  $\mu(E)=0$, so \eqref{usc-appl} 
actually yields 
\[
 \liminf_{k \to \infty} \int_{\Omega} \frac{h}{ u} \, d \mu_k \ge  \int_{\Omega} \frac{h}{ u} \, d \mu. 
\]
Combining  the preceding inequality with \eqref{upper} proves \eqref{claim}. 

In fact, $\mu_k$ coincides with $\mu$ on the 
set $G_k=\{x\in \Omega: \, \, u(x)<e^k\}$, i.e., 
\begin{equation}\label{claim-great}
\mu_k (K) = \mu(K), \quad \text{for every compact set} \, \, K \subset G_k.
\end{equation}

To prove \eqref{claim-great}, notice  that the set $G_k$ 
is finely open (see \cite{AG}, Sec. 7.1).  
Let  $U_k=\{x\in \Omega: \, \, u(x)>e^k\}$, and $\lambda=\chi_{U_k} \mu$.  
Then clearly $G \lambda\le G \mu=u$ in $\Omega$, and so 
$G \lambda <e^k$ on $G_k$. Moreover,  
$\lambda (G_k)=0$ since $U_k$ and $G_k$ are disjoint. Hence by \cite{Fug}, 
Theorem 8.10, $G \lambda$ is finely harmonic on $G_k$.

 On the other hand, let
  \[
\tilde \mu = \mu_k -\mu|_{F_k}, 
\]
where $\mu_k$ is supported on the closed set $F_k=\Omega\setminus U_k$.  By  \eqref{claim-u_k}, $\tilde \mu$ is a nonnegative measure on $\Omega$. 
Clearly, $G \tilde \mu \le G \mu_k=u_k\le e^k$ in $\Omega$. 
 Since $u_k-u=0$ on $G_k$, it follows that  
\[
G \tilde \mu=u_k-u + G \lambda
\] 
is finely harmonic on $G_k$. 
Hence applying 
  \cite{Fug}, Theorem 8.10 in the opposite direction, we deduce that $\tilde\mu(G_k)=0$, so 
$\tilde \mu(K)=\mu_k(K) - \mu(K) =0$ 
for every compact set $K\subset G_k$. The proof of  \eqref{claim-great} 
is complete.

As noted above, $u = \frac{d\mu}{d \omega}$  
is the Radon--Nikodym derivative defined $d\omega$-a.e., and $\mu(E)=\omega(E)=0$,  where $E= \{ x \in \Omega: u(x)=\infty\}$,
hence
\[
\int_\Omega h  \, d\omega  =  \int_\Omega \frac{h}{u}  \, d \mu
= \lim_{k \to \infty} \,  \int_\Omega \frac {h}{u_k} \, d \mu_k.  
\]
 Passing to the limit as $k \to \infty$ in \eqref{approx-v_k},  we obtain 
\begin{equation*}
\begin{aligned}
\int_\Omega h  \, d\omega & = \int_\Omega \nabla h \cdot \nabla v \, dx - \int_\Omega |\nabla v|^2 \, h \, dx , 
\end{aligned}
\end{equation*}
 for all $h \in C^\infty_0(\Omega)$,
which justifies equation \eqref{weakriccati}. 

By the Riesz decomposition theorem,  
\begin{equation}\label{integral-form}
v = G(-\triangle v) + g =  G(|\nabla v|^2 +\omega) + g,
\end{equation}
where $g$ is the greatest harmonic minorant of $v$.  Since $v \geq 0$, a harmonic minorant of $v$ is $0$, so $g \ge 0$.  It follows from (\ref{integral-form}) and the equation  $u = G(u\omega) + 1$ that 
$$
g \le v=\log u = \log \left (G (u \omega) + 1\right)\le G (u \omega). 
$$
Since $G(u \omega)$ is a Green potential, the greatest harmonic minorant of $G (u\omega)$ is $0$,  therefore  $g=0$.  Hence $v$ is a solution of (\ref{nonlineareqn-1}).  This completes the proof of Theorem \ref{riccatithm} (A).

Conversely, suppose $v\in W^{1,2}_{loc}(\Omega)$ is a solution of equation (\ref{nonlineareqn-1}), that is, $v = G (|\nabla v|^2 + \omega)$.  Then $v \geq 0$ is superharmonic, 
$d\nu = |\nabla v|^2  dx + d\omega$ is the corresponding Riesz measure, 
and 
\eqref{ric-eq-1} holds.  Let  $v_k = \min\, (v, \, k)$ and $\nu_k = -\triangle v_k$, for $k=1,2, \ldots$. Clearly,   $v_k\in W^{1,2}_{loc}(\Omega)\bigcap L^\infty(\Omega)$ is superharmonic. 

Next, as in the proof of \eqref{claim-u_k} above, we observe that 
$\nu_k \ge \nu$ on the set $F_k=\{x\in \Omega: \, v(x)\le k\}$. To verify this claim, it is enough to check that 
\begin{equation}\label{claim-v_k}
\nu_k (K) \ge \nu(K), \quad \text{for every compact set} \, \, K \subseteq F_k.
\end{equation}
The preceding inequality is deduced again using the approximation argument based on \cite{Hel}, Theorem 4.6.3. It requires 
the existence of a Borel set $E\subset \Omega$ such that 
$G \nu<\infty$ on $\Omega\!\setminus\!E$, and $\nu(E)=0$. Let $E=\{x \in \Omega: \, v(x)=\infty\}$. 
Then $E$ is a Borel set and $\text{cap}(E)=0$. 
We need to show that 
$\nu(E)=0$. 

It is known  (see \cite{HMV}, Lemma 2.1) that since $v\in W^{1,2}_{loc}(\Omega)$ is a solution to \eqref{ric-eq-1}, then 
\[
\int_\Omega h^2 d\nu=\int_\Omega |v|^2 h^2 dx + \int_\Omega h^2 d\omega \le 4 \int_\Omega |\nabla h|^2 dx, 
\]
for all $h \in C^\infty_0(\Omega)$. It follows immediately that $\nu(F)\le 4 \, \text{cap}(F)$ 
for all compact (and hence Borel) sets $F$. Since  $\text{cap}(E)=0$, we see that 
$\nu(E)=0$, which completes the proof of \eqref{claim-v_k}. 

We remark that actually $\nu_k = \nu$ on $G_k$,  
where $G_k=\{x \in \Omega: \, v(x)<k\}$, 
exactly as was shown above for $\mu_k = \mu$ on $G_k$ (with $e^k$ in place of $k$). However, we do not need  this fact in the remaining part of the proof.

Since $\nabla v = \nabla v_k$ $dx$-a.e. on $F_k$,  and $\nabla v_k=0$ $dx$-a.e. outside  $F_k$, 
it follows from  \eqref{claim-v_k} that 
\begin{equation}\label{just0}
-\triangle v_k =\nu_k  \ge \chi_{F_k} \nu  = |\nabla v_k|^2 + \chi_{F_k}\, \omega, 
 \end{equation}
as measures. In other words, 
\begin{equation}\label{just1}
-\triangle v_k=\nu_k=|\nabla v_k|^2 + \chi_{F_k}\, \omega  +\lambda_k,
 \end{equation}
where $\lambda_k$ is a nonnegative measure in $\Omega$ supported on $F_k$. 
In fact, as discussed above, $\lambda_k=0$ outside the set $\{x\in \Omega: \, u(x)=k\}$.

Let $u = e^v \geq 1$, $u_k=e^{v_k}$ and $\mu_k=-\triangle u_k$. Clearly, 
$\nabla u_k=\nabla v_k \, e^{v_k}$, so $u_k\in W^{1,2}_{loc}(\Omega)\bigcap L^\infty(\Omega)$.  
 We claim that 
\begin{equation}\label{just2}
\mu_k=-\triangle u_k = -\triangle v_k \, e^{v_k} -|\nabla v_k|^2  \, e^{v_k} \ge 0.  
\end{equation}
To prove  (\ref{just2}), we use integration by parts 
(\ref{by-parts}) with  $g=h e^{v_k}$, where  $h \in C^\infty_0(\Omega)$, and 
 $v_k$ in place of $r$: 
\begin{equation*}
\begin{aligned}
\int_\Omega h \, e^{v_k} \, d \nu_k & = 
\int_\Omega \nabla (h \, e^{v_k}) \cdot \nabla v_k \, dx 
\\ & = \int_\Omega  e^{v_k} \, \nabla h \cdot  \nabla v_k  \, dx  + \int_\Omega  h \, |\nabla v_k|^2 \, e^{v_k} \,dx \\ 
& = \int_\Omega \, \nabla h \cdot  \nabla u_k  \, dx  + \int_\Omega  h \, |\nabla v_k|^2 \, e^{v_k} \,dx 
\\
& 
 =  \int_\Omega h  \, d \mu_k  + \int_\Omega  h \, |\nabla v_k|^2 \, e^{v_k} \,dx. 
\end{aligned}
\end{equation*}
Hence, first applying  \eqref{just2}  and then \eqref{just1}, 
we obtain 
\begin{equation*}
\begin{aligned}
\langle h, \mu_k\rangle &   =\int_\Omega h \,   d \mu_k \\ 
 & = \int_\Omega h \, e^{v_k} \, d \nu_k 
- \int_\Omega    h  \, | \nabla v_k |^2 \, e^{v_k}  \,  dx \\  
& =\int_\Omega h \,  e^{v_k} \  \chi_{F_k}  \, d \omega + \int_\Omega h \,  e^{v_k} \   d \lambda_k \\ 
& =\int_\Omega h \,  e^{v} \  \chi_{F_k}  \, d \omega + \int_\Omega h \,  e^{v} \   d \lambda_k.
\end{aligned}
\end{equation*}
 From the preceding equation it follows that, for all $h\in C^\infty_0(\Omega)$,   $h \ge 0$, 
\begin{equation}\label{mu_k-G_k}
\langle h, \mu_k\rangle \ge \int_\Omega h \,  u \  \chi_{F_k}  \, d \omega  \ge 0.
\end{equation}
Since $v_k$, and hence $u_k$, is lower semicontinuous, 
it follows that $u_k$ is superharmonic in $\Omega$.

Clearly,  
 $u= \lim 
_{k \to +\infty} u_k$ is a superharmonic function in $\Omega$ as the limit of the 
increasing sequence of superharmonic functions $u_k$, since $u=e^v\not\equiv\infty$. 
Moreover, as mentioned above, the infinity set $E$ on which $u=e^v=\infty$
has zero capacity, and 
$\omega(E)\le \nu(E)\le 4 \, \text{cap}(E)$, so 
$\omega(E)=0$.

Since $-\triangle u_k=\mu_k \to \mu$  weakly in $\Omega$, where $\mu = - \triangle u$, passing to the limit as 
$k \to \infty$ in  \eqref{mu_k-G_k} and using the monotone convergence theorem on the right-hand side yields 
\[
\langle h, \mu\rangle \ge \int_{\Omega\setminus E} h \,  u  \, d \omega =\int_{\Omega} h \,  u  \, d \omega \ge 0.
\]

Hence $u$ is superharmonic, and 
\begin{equation}\label{just3}
-\triangle u \ge  \omega \, u \quad \text{in} \, \, \Omega 
\end{equation}
 in the sense of measures. 

It follows from (\ref{just3}) that 
 $\tilde \omega = - \triangle u-\omega u$ is a non-negative measure in $\Omega$,  so by the Riesz decomposition theorem
$$
u = G(-\triangle u) + g =  G(\omega u) + G \tilde \omega+ g \geq G(\omega u) + g,
$$
where $g$ is the greatest harmonic minorant of $u$. Since $u \ge 1$, i.e., 
$1$ is a harmonic minorant of $u$, it follows that $g \ge 1$, and consequently, 
\begin{equation}\label{iter}
u \ge G(\omega u ) + 1 = Tu + 1,
\end{equation}
for $T$ defined by (\ref{defT}).  Since $u \ge Tu$, it follows by Schur's test that 
$||T||_{L^2(\Omega, \omega) \to L^2(\Omega, \omega)} \le 1$, and hence 
 (\ref{equivnormTless1}) 
holds with $\beta =1$.

Iterating (\ref{iter}) and taking the limit, we see that 
$$
\phi \equiv 1 + \mathcal{G} \omega = 1 + \sum_{j=1}^{\infty} G_j \omega= 1 + \sum_{j=1}^{\infty} T^j 1  \le u < +\infty \, \, \text{a.e.},
$$
and 
$$
\phi = G(\omega \phi ) + 1.
$$
Hence $\phi$ is a positive  solution of (\ref{dirichlet}).  Thus (\ref{martincritsuff-g}) holds by Corollary \ref{cor} (B). This completes the proof of  Theorem \ref{riccatithm} (B).
 \end{proofof}

\noindent{\bf Remarks.} 1. As in \cite{FV2} for smooth domains and $\omega \in L^1_{loc} (\Omega)$, 
our sufficiency results   hold in uniform domains for signed measures $\omega$,  if $\omega$ is 
replaced with $|\omega|$ both in the spectral conditions (\ref{normTless1}), (\ref{equivnormTless1}), and
 conditions (\ref{martincritsuff-g}), (\ref{martincritnec-g}). 

2. The lower pointwise estimates of solutions in Theorem \ref{mainufest}(B) 
 are still true for signed measures $\omega$, under some additional assumptions 
 (see  \cite{GV1}). However, the upper pointwise estimates Theorem \ref{mainufest}(A) 
 are no longer true in general, unless we replace 
 $\omega$ with $|\omega|$.  

3. It is still unclear under which (precise) additional assumptions on the quadratic form of $\omega$ 
the main existence results and upper estimates of solutions remain valid. Some results of this type are discussed in \cite{JMV}, but without 
the prescribed boundary conditions.

\end{document}